\newtheorem{thm}{Theorem}[section]
\newtheorem{lem}[thm]{Lemma}
\newtheorem{prop}[thm]{Proposition}
\newtheorem{cor}[thm]{Corollary}
\newtheorem{theoremalph}{Theorem}
\newtheorem{coralph}[theoremalph]{Corollary}
\theoremstyle{definition}
\newtheorem{rem}[thm]{Remark}
\newtheorem{defin}[thm]{Definition}
\newtheorem{conv}[thm]{Convention}
\newtheorem{ex}[thm]{Example}
\newcommand{\eps}{\varepsilon}
\newcommand{\Z}{\mathbb{Z}}
\newcommand{\R}{\mathbb{R}}
\newcommand{\an}{\measuredangle}
\newcommand{\Id}{\mathrm{Id}}
\newcommand{\St}{\mathrm{Stab}}
\def\paragraph{\@startsection{paragraph}{4}%
  \z@\z@{-\fontdimen2\font}%
  {\normalfont\bfseries}}
\begin{document}

\title[Acylindrical actions for two-dimensional Artin groups]{Acylindrical actions for two-dimensional Artin groups of hyperbolic type}

\author[A.~Martin]{Alexandre Martin$^{\dag*}$}
           \address{Department of Mathematics and the Maxwell Institute for Mathematical Sciences,
           Heriot-Watt University,
           Riccarton,
           EH14 4AS Edinburgh, United Kingdom}
           \email{alexandre.martin@hw.ac.uk}
           \thanks{$\dag$ Partially supported by EPSRC New Investigator Award EP/S010963/1.}

\author[P.~Przytycki]{Piotr Przytycki$^{\ddagger*}$}
\address{
Department of Mathematics and Statistics,
McGill University,
Burnside Hall,
805 Sherbrooke Street West,
Montreal, QC,
H3A 0B9, Canada}

\email{piotr.przytycki@mcgill.ca}
\thanks{$\ddagger$ Partially supported by NSERC, FRQNT, and National Science Centre, Poland UMO-2018/30/M/ST1/00668.}
\thanks{$*$ This work was partially supported by
the grant 346300 for IMPAN from the Simons Foundation and the matching
2015--2019 Polish MNiSW fund}

\maketitle

\begin{abstract}
For a two-dimensional Artin group $A$ whose associated Coxeter group is hyperbolic, we prove that the action of $A$ on the hyperbolic space obtained by coning off certain subcomplexes of its modified Deligne complex is acylindrical. Moreover, if for each $s\in S$ there is $t\in S$ with $m_{st}< \infty$, then this action is universal. As a consequence, for $|S|\geq 3$, if $A$ is irreducible, then it is acylindrically hyperbolic. We also obtain the Tits alternative for~$A$, and we classify the subgroups of $A$ that virtually split as a direct product. A key ingredient in our approach is a simple criterion to show the acylindricity of an action on a two-dimensional $\mathrm{CAT}(-1)$ complex.
\end{abstract}

\section{Introduction}

Artin groups form a class of intensively studied groups generalising braid groups and closely related to Coxeter groups. Let us recall their definition. Let $S$ be a finite set and for all $s\neq t\in S$ let $m_{st}=m_{ts}\in \{2,3,\ldots, \infty\}$. We encode this data in the \emph{defining graph} with vertex set $S$ and edges between $s$ and $t$ with label $m_{st}$ whenever $m_{st}<\infty$. The associated \emph{Artin group} $A_S$ is given by generators and relations:
$$ A_S = \langle S \ | \ \underbrace{sts\cdots}_{m_{st}}=\underbrace{tst\cdots}_{m_{st}}\rangle.$$ The \emph{associated Coxeter group} $W_S$ is obtained by adding the relations $s^2=1$ for every $s\in S$.

\smallskip

Coxeter groups are well understood in many respects. In particular, they are known to be $\mathrm{CAT}(0)$ \cite{Mou}. Artin groups on the other hand form a class of groups with a more mysterious structure and geometry, and many problems remain open in general: whether they are torsion-free, linear, or whether they satisfy the celebrated $K(\pi, 1)$ conjecture. (See \cite{CharneyProblems} for a survey of many open problems on Artin groups, as well as partial results.) While little is known about general Artin groups, they are expected to be as well-behaved as Coxeter groups. In particular, braid groups are conjectured to be  $\mathrm{CAT}(0)$, which was verified in low dimensions \cite{4BraidCAT0, 6BraidCAT0}. Recently, Artin groups \textit{of large-type} (i.e.\ such that $m_{st}\geq 3$ for every $s\neq t \in S$) were shown to be systolic \cite{SystolicArtin}, a simplicial analogue of $\mathrm{CAT}(0)$. Artin groups of \emph{XXL type} (that is, with all $m_{st}\geq 5$), are CAT(0) by \cite{H2}.

\smallskip

A unifying theme for many groups in geometric group theory has been to find interesting actions on hyperbolic spaces, and in particular \textit{acylindrical} actions on hyperbolic spaces. Recall that an isometric action of a group $G$ on a metric space~$X$ is acylindrical \cite{BowditchTightGeodesics} if for every $r \geq 0$, there exist constants $L, N\geq 0$ such that for every $x, y \in X$ at distance at least $L$, there are at most $N$ elements $g \in G$ such that $d(x, gx) \leq r$, $d(y, gy)\leq r$. The prime example of this phenomenon is the mapping class group of a closed hyperbolic surface acting acylindrically on its curve complex \cite{MasurMinsky, BowditchTightGeodesics}. Since then, many groups have been shown to admit acylindrical actions on hyperbolic spaces \cite{OsinAcyl}, including large classes of Artin groups and related groups  \cite{ChatterjiMartin, CalvezWiest, CharneyAcylArtin, H2}.

We say that an action is \emph{elliptic} if it has bounded orbits. A group that is not virtually cyclic is \emph{acylindrically hyperbolic} if it has has an acylindrical action on a hyperbolic space that is not elliptic. (For equivalent definitions, see \cite[Thm~1.2]{OsinAcyl}.)

While a group may have many acylindrical actions on hyperbolic spaces, there has been a lot of interest recently in understanding which groups, like mapping class groups, admit a `universal' acylindrical action on a hyperbolic space. Let $G$ be a group acting on a hyperbolic space $X$. Recall that an element $g \in G$ is \textit{loxodromic} for this action if for some (hence any) point $x \in X$, its orbit map $\Z\ni n\to g^nx\in X$ is a quasi-isometric embedding. An acylindrical action of a group $G$ on a hyperbolic metric space $X$ is called \textit{universal} \cite{OsinAcyl} if the elements of $G$ that are loxodromic for this action are exactly the elements of $G$ that are loxodromic for \textit{some} acylindrical action of $G$ on a hyperbolic space (such elements are called \textit{generalised loxodromic}). Universal actions often offer much deeper insight into the structure of the underlying group, and are known to exist for instance for right-angled Artin groups, and more generally for hierarchically hyperbolic groups \cite{HHUniversal}. The goal of this article is to show the existence of such a universal acylindrical action for a large class of Artin groups, and to use the dynamics of the action to understand the structure of certain of its subgroups.

\medskip

\paragraph{Statement of results.} An Artin group is \textit{two-dimensional} (see \cite{CD}) if for every $s, t, r \in S$, we have
$$\frac{1}{m_{st}} + \frac{1}{m_{tr}}+\frac{1}{m_{sr}} \leq 1.$$ This class contains in particular all large-type Artin groups. We say that an Artin group is of \textit{hyperbolic type} if the associated Coxeter group is hyperbolic. For two-dimensional Artin groups, by \cite{Mou} this is equivalent to requiring that for any $s, t, r \in S$, we have
$$\frac{1}{m_{st}} + \frac{1}{m_{tr}}+\frac{1}{m_{sr}} < 1.$$

An important complex associated to an Artin group is its \textit{modified Deligne complex} (see Definition \ref{def:Deligne}) introduced by Charney and Davis \cite{CD}, and generalising a construction of Deligne for Artin groups of spherical type \cite{Del}. While the action of an Artin group on its modified Deligne complex is almost never acylindrical, we are able to construct an acylindrical action for a two-dimensional Artin group of hyperbolic type by coning off an appropriate family of subcomplexes of its modified Deligne complex called \textit{standard trees} (see Definition \ref{def:standard_tree}). Our main result is the following:

\begin{theoremalph}
\label{thm:acylindrically}
The action of a two-dimensional Artin group $A_S$ of hyperbolic type on its coned off Deligne complex is acylindrical.
Moreover, if for each $s\in S$ there is $t\in S$ with $m_{st}< \infty$, then this action is universal.

In particular, for $|S|\geq 3$, if $A_S$ is irreducible, then it is acylindrically hyperbolic.
\end{theoremalph}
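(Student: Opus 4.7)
The plan is to leverage the CAT(-1) structure of the modified Deligne complex $D_S$ (due to Charney--Davis for two-dimensional Artin groups of hyperbolic type) together with the acylindricity criterion for 2D CAT(-1) complexes promised in the abstract, then deduce universality by classifying non-loxodromic elements and ruling each one out via Osin's criterion, and finally produce a loxodromic isometry to obtain the acylindrical-hyperbolicity corollary.

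For the acylindricity of the action on the coned-off Deligne complex $X$, I would apply the CAT(-1) criterion to $D_S$ to bound, for each $r \geq 0$, the number of elements $g \in A_S$ satisfying $d(x, gx), d(y, gy) \leq r$ whenever $x, y \in D_S$ are sufficiently far apart. Lifting back to $X$, two distant points $\bar x, \bar y \in X$ admit preimages in $D_S$ in one of two configurations: either a CAT(-1) geodesic in $D_S$ between them stays uniformly close to a single coned-off standard tree $T$, in which case the joint $r$-almost-stabilizer is controlled by the action of $\St(T)$ on $T$; or such a geodesic crosses distant regions of $D_S$ and the CAT(-1) criterion applies directly. Since $\St(T)$ is a dihedral Artin group $A_{st}$ with $m_{st}<\infty$ acting on $T$ with uniformly finite stabilizers of pairs of distant points, this yields acylindricity on $X$.

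For universality, I would first classify the non-loxodromic elements: an element of $A_S$ with bounded orbit in $X$ either fixes a cone point and thus stabilizes a standard tree, or has bounded orbit already in $D_S$, in which case by the CAT(0) fixed-point theorem it fixes a point of $D_S$. In both cases it lies in a finite-type parabolic subgroup, which in the two-dimensional setting is contained in a dihedral Artin subgroup $A_{st}$ with $m_{st}<\infty$. Under the hypothesis that every $s \in S$ lies on some edge with $m_{st}<\infty$, each such $A_{st}$ has infinite cyclic center generated by the Garside element, so any infinite-order element of $A_{st}$ is centralized by this center, producing a $\Z^2$ in its centralizer (either jointly with the Garside element when the element is non-central, or within $A_{st}$ when it is central). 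By Osin's criterion, no element whose centralizer contains $\Z^2$ is loxodromic for any acylindrical action on a hyperbolic space, so every non-loxodromic element for the coned-off action fails to be generalized loxodromic; this is precisely universality.

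For the acylindrical-hyperbolicity corollary, irreducibility and $|S| \geq 3$ rule out virtual cyclicity of $A_S$, and it remains to produce a loxodromic isometry of $X$. A ping-pong argument using three generators $r, s, t$ with $m_{rs}, m_{st}$ finite (available by irreducibility and $|S| \geq 3$) would yield an element whose orbit in $D_S$ traverses an infinite sequence of distinct standard trees and remains a quasi-geodesic after coning off. The main obstacle I anticipate is the acylindricity criterion itself on 2D CAT(-1) complexes: bounding the number of isometries that can simultaneously $r$-almost-fix two distant points requires a careful angle-sum analysis at vertices along the connecting geodesic, and the interaction with the coned-off standard trees forces this analysis to be carried out on $D_S$ while carefully tracking the descent to $X$.
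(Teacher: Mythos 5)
There are several genuine gaps, the most serious being in the acylindricity argument. You propose to run the CAT($-1$) criterion on the modified Deligne complex and then ``lift back'' to the coned-off space, but the action on $\Phi$ is not even weakly acylindrical: the pointwise stabiliser of a standard tree is the infinite cyclic group generated by the corresponding conjugate of a generator, and standard trees are unbounded, so the criterion cannot be applied to $\Phi$, and the dichotomy you sketch for geodesics of $\Phi$ is not a substitute for it. Your claim that the stabiliser of a standard tree is a dihedral Artin group ``acting on $T$ with uniformly finite stabilizers of pairs of distant points'' is false: by Lemma~\ref{lem:stab_tree} that stabiliser is $A_r\times F$ with $F$ free, the $A_r$ factor fixes $T$ pointwise, and this group need not be contained in any dihedral parabolic. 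The paper's route is to equip the coned-off complex $\Phi^*$ itself with a piecewise hyperbolic CAT($-1$) metric (Proposition~\ref{prop:CAT(-1)}, which is what forces the careful choice of angles in Section~\ref{sec:complex}), to prove weak acylindricity of the action on $\Phi^*$ directly (Proposition~\ref{cor:w_acyl}, using Lemma~\ref{lem:tree_local} on edge stabilisers), and only then to invoke the general criterion (Theorem~\ref{thm:main_acyl}) on $\Phi^*$. Without such a structure on $\Phi^*$, the passage from $\Phi$ to the coned-off space that you leave implicit is exactly where the difficulty lies.

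Two further points. For universality you must first know that every non-loxodromic element is elliptic; bounded orbits are not automatic for isometries of non-proper spaces, and the paper uses Bridson's semisimplicity theorem for polyhedral isometries here. Moreover, an elliptic element fixing a cone vertex lies in $\Z\times F$, not in a dihedral parabolic as you assert; your intended conclusion (a non-virtually-cyclic centraliser, hence not generalised loxodromic by Osin) still goes through from that structure, and this is also where the hypothesis that each $s$ has some $t$ with $m_{st}<\infty$ is needed to guarantee $F\neq 1$. Finally, the ping-pong construction of a loxodromic element is both uncarried-out and unnecessary: once the action is acylindrical, Osin's trichotomy reduces the last assertion to showing that $A_S$ has no global fixed point in $\Phi^*$, which follows from $|S|\geq 3$ and irreducibility via the fixed-point theorem together with Lemma~\ref{lem:stab_tree}.
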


Here $A_S$ is \emph{irreducible} if there is no nontrivial partition $S=T\sqcup T'$ with $m_{tt'}=2$ for all $t\in T,t'\in T'$.

Note that a notion stronger than universal acylindrical action, namely \textit{largest} acylindrical action, was recently introduced in \cite{HypStructures}. Right-angled Artin groups and more generally hierarchically hyperbolic groups are known to have such a largest action \cite{HHUniversal}. Although we do not address this question here, it would be interesting to know whether the action of $A_S$ on its coned off Deligne complex is a largest acylindrical action for $A_S$.

Theorem A is the first example of a universal acylindrical action for Artin groups that are not right-angled. The dynamics of such an action can be used to understand the structure of certain subgroups of these Artin groups, as we now explain.

\medskip

\paragraph{Tits alternative.} A group satisfies the \textit{Tits alternative} if every finitely generated subgroup is either virtually soluble or contains a non-abelian free subgroup. This dichotomy has been shown for many groups of geometric interest, such as linear groups \cite{TitsAlternative}, mapping class groups of hyperbolic surfaces \cite{IvanovAutomorphisms, McCarthyTitsAlternative}, outer automorphism groups of free groups \cite{BestvinaFeighnHandelTitsI,BFH2}, groups of birational transformations of surfaces \cite{CantatTits}, etc. A general heuristic is that a group that is non-positively curved in a very broad sense should satisfy the Tits alternative. In particular, it is conjectured that all $\mathrm{CAT}(0)$ groups satisfy the Tits alternative. Several classes of $\mathrm{CAT}(0)$ groups have been shown to satisfy this alternative, in particular cocompactly cubulated groups \cite{SageevWiseTits}, and groups acting geometrically on two-dimensional systolic complexes or buildings \cite{OP}, but the problem remains open in general.

Following the heuristic that Artin groups should be non-positively curved in an appropriate sense, it is natural to ask whether Artin groups satisfy the Tits alternative, as already noted in \cite{BestvinaArtin}. It should be mentioned that  Coxeter groups are linear, which implies that they do satisfy the Tits alternative. So far, the following classes of Artin groups have been shown to satisfy the Tits alternative:

\begin{itemize}
\item Artin groups that can be cocompactly cubulated. An important class of such Artin groups is the class of \textit{right-angled} Artin groups, namely Artin groups such that $m_{st} = 2$ or $\infty$ for every $s\neq t \in S$. Beyond them, a few classes of Artin groups have been shown to be cocompactly cubulated \cite{HJP, VirtuallyCubulatedArtin}, but the conjectural picture states that the class of cocompactly cubulated Artin groups is extremely constrained \cite{VirtuallyCubulatedArtin}.
\item Artin groups \textit{of finite type} (also known as \textit{spherical} Artin groups), i.e.\ Artin groups whose associated Coxeter group is finite, since they were shown to be linear \cite{Kr, SphericalArtinLinear, Dig}.
\item Artin groups acting geometrically on certain two-dimensional complexes, including large-type Artin groups  \cite[Thm~A.2]{OP} and Artin groups acting geometrically on a two-dimensional systolic complex \cite[Thm~A]{OP} provided by \cite{BM}.
\end{itemize}

We obtain a strengthening of the Tits alternative for two-dimensional Artin groups of hyperbolic type.

\begin{coralph}[Tits alternative]
\label{thm:Tits Alternative} Let $A_S$ be a two-dimensional Artin group of hyperbolic type. Then $A_S$ satisfies the Tits alternative. More precisely, every subgroup of $A_S$ that is not virtually cyclic either contains a nonabelian free group or is virtually~$\Z^2$.
\end{coralph}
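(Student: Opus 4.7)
The plan is to run the standard structural theory of groups acting acylindrically on a hyperbolic space on the action supplied by Theorem~A, and then to handle the only delicate case --- elliptic subgroups --- by means of the two-dimensional $\mathrm{CAT}(-1)$ geometry of the modified Deligne complex.

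Let $H\leq A_S$ be a subgroup that is not virtually cyclic. Restricting the action of Theorem~A, $H$ acts acylindrically on the hyperbolic coned off Deligne complex $X$. Osin's classification of subgroups acting acylindrically on a hyperbolic space \cite{OsinAcyl} then forces one of three cases: (i) $H$ has bounded orbits on $X$; (ii) $H$ is virtually cyclic with an element loxodromic on $X$; or (iii) $H$ contains a nonabelian free subgroup generated by two loxodromic elements. Case (ii) is excluded by hypothesis and case (iii) immediately produces the desired free subgroup, so it remains to treat the elliptic case (i).

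To handle the elliptic case I would pass back to the modified Deligne complex $Y$ itself, which in the hyperbolic type setting is a two-dimensional $\mathrm{CAT}(-1)$ complex on which $A_S$ acts cocompactly. A loxodromic element of $Y$ remains loxodromic on $X$ unless its axis is swallowed by the coning construction, i.e.\ unless it translates along a standard tree. Using this together with $\mathrm{CAT}(-1)$ rigidity (parallel geodesics coincide) I would show that either (a) $H$ acts on $Y$ with bounded orbits and hence, by the Cartan fixed-point theorem, fixes a point of $Y$, whence after passing to finite index $H'\leq H$ fixes a vertex of $Y$; or (b) $H$ virtually stabilizes a single standard tree. In both subcases $H'$ is contained in (or commensurable with) a rank-two spherical parabolic of $A_S$, namely a dihedral Artin subgroup $A_{st}$ with $m_{st}<\infty$.

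I would then finish by analysing subgroups of such a dihedral Artin group. The group $A_{st}$ has infinite cyclic centre with virtually free quotient, and since central $\Z$-extensions of a free group split ($H^2(F_n,\Z)=0$), a finite-index subgroup of $A_{st}$ is isomorphic to $F_n\times\Z$ for some $n\geq 1$. Projecting to the $F_n$-factor shows that any subgroup of $F_n\times\Z$ has cyclic or nonabelian free image, and, combined with the central $\Z$-kernel, any non-virtually-cyclic subgroup of $F_n\times\Z$ either contains $F_2$ or is virtually $\Z^2$. Applying this to $H'$, and hence to $H$, completes the proof.

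The main obstacle in this outline is the middle step, where one must rule out intermediate behaviour of an elliptic-on-$X$ subgroup that neither has a common fixed point in $Y$ nor stabilizes a single standard tree --- for instance, a configuration in which $H$ contains loxodromic-on-$Y$ elements whose axes lie in distinct standard trees. Controlling this requires combining the $\mathrm{CAT}(-1)$ geometry of $Y$, the convexity of standard trees, and the acylindricity established in Theorem~A: informally, if two elements of $H$ were to translate along distinct standard trees, a ping-pong argument on $X$ should produce a nonabelian free subgroup of loxodromics, contradicting ellipticity on $X$.
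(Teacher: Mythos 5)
Your overall strategy (acylindricity of the action on the coned off complex, Osin's elliptic/loxodromic trichotomy, a nonabelian free subgroup in the acylindrically hyperbolic case, and a structural analysis of $\Z\times F$ in the elliptic case) is the same as the paper's. However, your treatment of the elliptic case contains a genuine gap --- one you flag yourself as ``the main obstacle'' --- and it is created by an unnecessary detour. You propose to pass from ellipticity on the coned off complex $\Phi^*$ back to the uncollapsed complex $\Phi$ and then to argue that $H$ either has bounded orbits on $\Phi$ or virtually stabilises a single standard tree. Ruling out the intermediate configurations (e.g.\ unbounded orbits on $\Phi$ with loxodromics whose axes meet several standard trees, or loxodromics of $\Phi$ that become elliptic on $\Phi^*$ without their axes lying in any one standard tree) is not carried out, and your suggested ping-pong argument is not set up; as written the case analysis is incomplete. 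The detour is avoidable: $\Phi^*$ itself is a \emph{complete} $\mathrm{CAT}(-1)$ space (Proposition~\ref{prop:CAT(-1)} and Definition~\ref{def:cone_off}), so a subgroup with bounded orbits on $\Phi^*$ fixes a point of $\Phi^*$ by the Bruhat--Tits/Cartan fixed-point theorem, and since the action is without inversions it fixes a \emph{vertex} of $\Phi^*$. That vertex is either a vertex of $\Phi$ --- whose stabiliser is trivial, some $A_s\cong\Z$, or a dihedral $A_{st}$, hence virtually $\Z\times F$ by Lemma~\ref{lem:vertexgroup} --- or a cone vertex, whose stabiliser is the stabiliser of a standard tree and equals $\Z\times F$ by Lemma~\ref{lem:stab_tree}. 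This is exactly Remark~\ref{rem:elliptic}, and it dissolves your obstacle entirely.

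A secondary inaccuracy: in your subcase (b) you assert that the relevant subgroup is contained in (or commensurable with) a single dihedral parabolic $A_{st}$. This is false for standard tree stabilisers in general: the stabiliser of the fixed-point set of $r$ has the form $A_r\times F$ where $F$ may be a nonabelian free group generated by several elements $z_{st}$ and their conjugates (see Example~\ref{exa:tree}), and such a group need not embed in any dihedral parabolic. Fortunately your endgame only uses the abstract structure $F_n\times\Z$, which these stabilisers do have, so the final step of your argument (central $\Z$, virtually free quotient, hence any non-virtually-cyclic subgroup is virtually $\Z^2$ or contains $F_2$) is correct once the structure $\Z\times F$ is established by the route above.
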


In a forthcoming article \cite{MP2}, we prove the Tits alternative for
Artin groups of type~FC, using different techniques.

\medskip

\paragraph{Virtual abelian and virtual product subgroups.} We complete Corollary~\ref{thm:Tits Alternative}  by obtaining a classification of the virtually $\mathbb{Z}^2$ subgroups of two-dimensional Artin groups of hyperbolic type. Before stating our result, let us first recall some of the known $\mathbb{Z}^2$ of Artin groups (see Section~\ref{sec:preliminaries} for references).

A \textit{dihedral} Artin group, that is, an Artin group $A_{st}$ on two generators $s, t$ with $m_{st}< \infty$, contains a finite index subgroup isomorphic to $\mathbb{Z} \times F$, where $F$ is a free group of rank $\geq 1$. In particular, dihedral \emph{parabolic} subgroups (i.e.\ conjugates of $A_{st}$) of a given Artin group $A_S$ contain many $\mathbb{Z}^2$ subgroups.

Another source of $\mathbb{Z}^2$ subgroups are the central elements of dihedral parabolic subgroups. The centre of a dihedral Artin group on two generators $s, t$ with $m_{st}\geq 3$ is infinite cyclic, generated by an element $z_{st}$. In particular, for a general Artin group and $s \in S$, $s$ commutes with any element in the subgroup generated by $z_{st_1}, \ldots, z_{st_m}$, where $t_1, \ldots, t_m$ denote the neighbours of $s$ is the defining graph.

For two-dimensional Artin groups of hyperbolic type, our result states that these $\mathbb{Z}^2$ subgroups are close to being the only ones, and relies crucially on the dynamics of the action of these Artin groups on their coned off Deligne complex.

\begin{coralph}[Classification of virtually $\mathbb{Z}^2$ subgroups]
\label{thm:classification_Z2}
Let $A_S$ be a two-dimensional Artin group of hyperbolic type, and let $H$ be a subgroup that is virtually $\mathbb{Z}^2$. Up to conjugation, one of the following occurs:
\begin{itemize}
\item $H$ is contained in the stabiliser of a vertex of the modified Deligne complex (i.e.\ is contained in a dihedral parabolic subgroup), or
\item $H$ is contained in the stabiliser of a standard tree of the modified Deligne complex. In particular, $H$ contains a conjugate of a non-trivial power of some $s \in S$. We refer to Remark \ref{rem:description} for an explicit description of these subgroups.
\end{itemize}
\end{coralph}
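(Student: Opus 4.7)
The plan is to leverage the acylindrical action of $A_S$ on the coned off Deligne complex $X$ supplied by Theorem~A, combined with the $\mathrm{CAT}(-1)$ geometry of the underlying modified Deligne complex, which I will denote $\bar X$. Let $H \leq A_S$ be virtually $\mathbb{Z}^2$, and fix a finite-index normal subgroup $H_0 \cong \mathbb{Z}^2$. Because the restriction of an acylindrical action to a subgroup is again acylindrical, $H$ acts acylindrically on the hyperbolic space $X$. By Osin's classification \cite{OsinAcyl} of acylindrical actions on hyperbolic spaces, such an action is necessarily elliptic whenever the underlying group is neither virtually cyclic nor contains a non-abelian free subgroup; both possibilities are excluded for $H$, so $H$ acts on $X$ with bounded orbits.

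I would then transfer attention to the action of $H$ on the $\mathrm{CAT}(-1)$ complex $\bar X$. If $H$ already acts with bounded orbits on $\bar X$, then by the $\mathrm{CAT}(0)$ fixed point theorem $H$ fixes a point, and after passing to the barycentric subdivision it fixes a vertex $v$ of $\bar X$. The stabiliser of $v$ is a parabolic subgroup of rank at most $2$, and since $H$ is virtually $\mathbb{Z}^2$ it cannot be contained in the trivial or a cyclic parabolic; hence $H$ sits inside a conjugate of a dihedral parabolic $A_{st}$, yielding the first alternative of the statement.

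Otherwise $H_0$ has unbounded orbits on $\bar X$ but bounded orbits on $X$. Pick any element $h \in H_0$ acting loxodromically on $\bar X$; by the previous paragraph it must act elliptically on $X$. The unboundedness of its axis $\ell \subset \bar X$ must then be absorbed by the coning off, forcing $\ell$ to lie inside a standard tree $T$, and in particular $h$ to preserve $T$. Since $H_0$ centralises $h$, it preserves $\ell$, and then, by the uniqueness of the standard tree containing $\ell$, it also preserves $T$. A final rigidity argument promotes this to $H$ itself preserving $T$, so $H$ is contained in the stabiliser of a standard tree, giving the second alternative. The explicit structure in Remark~\ref{rem:description} then yields the claim about powers of some $s \in S$.

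The main obstacle is this second case, and in particular two geometric inputs: first, that any element of $A_S$ which is loxodromic on $\bar X$ but elliptic on $X$ must have its axis inside a standard tree, which requires understanding precisely which subcomplexes of $\bar X$ were coned off; and second, the promotion from ``$H_0$ preserves $T$'' to ``$H$ preserves $T$'', which relies on the rigidity of standard trees and their stabilisers.
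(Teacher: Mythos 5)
Your opening step is exactly the paper's: $H$ is virtually $\mathbb{Z}^2$, hence neither virtually cyclic nor containing a non-abelian free group, so by Osin's trichotomy for the acylindrical action of Theorem~\ref{thm:acylindrically} it must be elliptic on $\Phi^*$. From that point on, however, you take a detour through the action on $\Phi$ that both introduces genuine gaps and recreates, in a harder form, the tool you are trying to avoid. The paper instead applies the fixed-point theorem directly to $\Phi^*$: since $\Phi^*$ is complete and $\mathrm{CAT}(-1)$ (Proposition~\ref{prop:CAT(-1)}), an elliptic $H$ fixes a point by \cite[Thm~II.2.8(1)]{BH}, and since the action is without inversions it fixes a \emph{vertex} of $\Phi^*$, which is either a vertex of $\Phi$ (giving a parabolic, necessarily dihedral since $H$ is not virtually cyclic) or a cone vertex (giving the stabiliser of a standard tree, whose structure is Lemma~\ref{lem:stab_tree}). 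This is the whole proof; there is no case division on the behaviour of $H$ on $\Phi$.

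The concrete gaps in your route are the following. First, in the case where $H_0$ has unbounded orbits on $\Phi$, you ``pick any element $h\in H_0$ acting loxodromically on $\Phi$'' --- the existence of such an element is not automatic from unboundedness of orbits; you would need Bridson's semisimplicity theorem plus an argument that a $\mathbb{Z}^2$ all of whose elements are elliptic on a complete finite-dimensional $\mathrm{CAT}(0)$ complex has a global fixed point, none of which you supply. Second, the step ``the unboundedness of $\ell$ must be absorbed by the coning off, forcing $\ell$ to lie inside a standard tree'' is precisely where the work lies, and the only reasonable proof is to apply the fixed-point theorem to $h$ acting on $\Phi^*$ (so that $h$ fixes a cone vertex, stabilises the corresponding tree $T$, and then $\mathrm{Min}(h)\cap T\neq\emptyset$ forces the axis into the convex subtree $T$) --- i.e.\ the very argument that, applied to $H$ at the outset, finishes the proof without any of this. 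Third, the ``final rigidity argument'' promoting invariance of $T$ from $H_0$ to $H$ is asserted, not proved; it can be done (in $\mathrm{CAT}(-1)$ commuting loxodromics share their axis, $H$ normalises $H_0$, and each edge of $\Phi_{\mathcal T}$ lies in a unique standard tree by Remark~\ref{rem:trees unique}), but as written it is a gap. In short, the identified ``main obstacles'' are real, they are not resolved in the proposal, and they disappear entirely if you work with $\Phi^*$ rather than $\Phi$ throughout.
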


By contrast, for more general two-dimensional Artin groups where $A_S$ contains a Euclidean parabolic subgroup, there exist `exotic' virtually $\mathbb{Z}^2$ subgroups coming from periodic flats of the modified Deligne complex, see \cite{MPabelian}.

\smallskip

In a similar direction, we also obtain a complete classification of the subgroups that decompose as a non-trivial product in a two-dimensional Artin group of hyperbolic type.

\begin{coralph}[Classification of virtual products]
\label{thm:classification_prod}
Let $A_S$ be a two-dimensional Artin group of hyperbolic type, and let $H$ be a  subgroup that virtually splits as a (non-trivial) direct product. Then $H$ is virtually of the form $\mathbb{Z}\times F$, where $F$ is a free group.
\end{coralph}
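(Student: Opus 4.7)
The plan is to show that $H$ is virtually contained in either a dihedral parabolic subgroup or the stabiliser of a standard tree, both of which are virtually of the form $\Z \times F$ with $F$ free, and then to observe that subgroups of groups of this form are themselves virtually of this form. Passing to a finite-index subgroup, I may assume $H = H_1 \times H_2$ is a genuine direct product with both factors infinite. By Corollary~\ref{thm:Tits Alternative} either $H$ is virtually $\Z^2$, in which case the statement is immediate with $F = \Z$, or $H$ contains a nonabelian free subgroup; in the latter case at least one factor is not virtually abelian, so after relabelling, Corollary~\ref{thm:Tits Alternative} applied to $H_1$ yields a nonabelian free subgroup $F \le H_1$.

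Next, I would use Theorem~\ref{thm:acylindrically} to constrain the dynamics on the coned off Deligne complex $X$. No element of $H_2$ can act loxodromically on $X$: acylindricity would force $C_{A_S}(g)$ to be virtually cyclic for any such loxodromic $g \in H_2$, contradicting $F \le H_1 \subseteq C_{A_S}(g)$. Likewise, no element of $H_1$ can act loxodromically: since $H_1 \supseteq F$ is not virtually cyclic, the usual trichotomy for acylindrical actions on hyperbolic spaces would make $H_1$ non-elementary, so $C_{A_S}(H_1)$ would pointwise stabilise four distinct points at infinity and hence be finite, contradicting that $H_2 \subseteq C_{A_S}(H_1)$ is infinite. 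Thus both $H_1$ and $H_2$ act elliptically on $X$.

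Using the CAT($-1$) geometry underlying the paper's acylindricity criterion, $\mathrm{Fix}(H_1) \subseteq X$ is a nonempty closed convex subset; since $H_2$ centralises $H_1$ it preserves $\mathrm{Fix}(H_1)$ and still has bounded orbits on it, so the circumcentre provides a point $p \in X$ fixed by all of $H$. Because $F \le \St(p)$ is nonabelian free, $p$ cannot lie in a cell whose stabiliser is trivial or infinite cyclic, and therefore $p$ is either a Deligne vertex with stabiliser a dihedral parabolic $A_{st}$ or a cone point whose stabiliser is the stabiliser of a standard tree. In both cases the stabiliser is virtually of the form $\Z \times F'$ with $F'$ free: for $A_{st}$ this is the infinite cyclic centre $\langle z_{st}\rangle$ together with the virtual freeness of the quotient $A_{st}/Z(A_{st})$, and for standard tree stabilisers it follows from the explicit description referenced in Remark~\ref{rem:description}. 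Finally, any subgroup of $\Z \times F'$ is a central $\Z$-extension of a free group and hence splits as $\Z^k \times F''$ with $k \in \{0,1\}$ and $F''$ free, so since $H$ is an infinite virtual direct product, it must be virtually $\Z \times F''$, as required.

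The step I expect to require the most care is the extraction of a common fixed point for $H$ in $X$, since coning off subcomplexes can in principle distort CAT($-1$) geometry. If the coned off complex is not directly CAT($-1$) at the relevant scale, I would instead work on the underlying CAT(0) modified Deligne complex and combine the bounded orbits on $X$ with cocompactness of the cellular action to locate $H$ inside a single vertex stabiliser up to finite index, transferring back to the four types of vertex stabilisers listed above.
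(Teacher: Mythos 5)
Your argument is correct and reaches the paper's conclusion by the same endgame (show $H$ is elliptic on the coned off complex, then read off the vertex/cone-vertex stabiliser structure), but your route to ellipticity is genuinely different and more hands-on. The paper's proof is two lines: since $A_S$ is torsion-free \cite[Thm~B]{CD}, a virtual non-trivial direct product has two infinite factors, hence is not acylindrically hyperbolic by \cite[Cor~7.3(b)]{OsinAcyl}; combined with the acylindricity of the action (Theorem~\ref{thm:acylindrically}) and Osin's trichotomy, $H$ is elliptic, and Remark~\ref{rem:elliptic} finishes. You instead re-derive the relevant special case of Osin's result directly: you rule out loxodromics in each factor via centralisers (for $H_2$, because $C(g)$ of a loxodromic is virtually cyclic; for $H_1$, because $H_2$ would lie in $E(g)\cap E(g')$ for independent loxodromics $g,g'$, which is finite --- this is the precise form of your ``four points at infinity'' remark and is the cleaner way to phrase it), and then assemble a global fixed point by the circumcentre argument applied to $\mathrm{Fix}(H_1)$. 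Your approach buys independence from the citation and makes the dynamics explicit; the paper's buys brevity. Two small points: your closing worry about the $\mathrm{CAT}(-1)$ geometry of $\Phi^*$ is unfounded, since Proposition~\ref{prop:CAT(-1)} and Convention~\ref{con} guarantee $\Phi^*$ is complete and $\mathrm{CAT}(-1)$, so \cite[Thm~II.2.8(1)]{BH} applies directly (this is exactly what Remark~\ref{rem:elliptic} does, and it also lets you replace the circumcentre of an arbitrary point by a fixed \emph{vertex}, since the action is without inversions); and your opening assumption that both factors are infinite needs the torsion-freeness of $A_S$ \cite[Thm~B]{CD}, which should be stated. The initial detour through Corollary~\ref{thm:Tits Alternative} to extract a free subgroup of one factor is not needed for the paper's argument, though it is what powers your centraliser step.
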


\paragraph{Strategy of the proof.} The key to all the theorems is to find a convenient hyperbolic space on which $A_S$ acts acylindrically.
The first space we study is the modified Deligne complex $\Phi$ associated to $A_S$, where it was shown that the Moussong metric on $\Phi$ is $\mathrm{CAT}(0)$ for two-dimensional Artin groups \cite{CD}. In the case of two-dimensional Artin groups of hyperbolic type, this metric can be modified to be $\mathrm{CAT}(-1)$. However, since dihedral Artin groups have non-trivial centres, the action on $\Phi$ is not acylindrical. More precisely, $\Phi$ contains unbounded trees with infinite pointwise stabilisers (see Definition \ref{def:standard_tree}). To circumvent this, we construct a new space $\Phi^*$ by coning off these trees, and we show that it is still possible to endow this new space $\Phi^*$ with an equivariant $\mathrm{CAT}(-1)$ metric. More crucially, removing these obvious obstructions to acylindricity turns out to be enough, as we prove that the action of $A_S$ on the coned off Deligne complex $\Phi^*$ is acylindrical and universal.

Theorem~\ref{thm:acylindrically} is proved by means of a general result on acylindrical actions on two-dimensional $\mathrm{CAT}(-1)$ spaces. Recall that an action of a group $G$ on a metric space $X$ is \textit{weakly acylindrical} \cite{MartinAcylSquare} if there exist constants $L, N \geq 0$ such that  two points of $X$ at distance  $\geq L$ are fixed by at most $N$ elements of $G$. Weak acylindricity is a dynamical condition that is weaker and much easier to deal with than acylindricity, especially for actions on non-locally compact spaces. Weak acylindricity was already known to be equivalent to acylindricity for actions on trees, and more generally for actions on finite-dimensional $\mathrm{CAT}(0)$ cube complexes \cite{ConingOffGenevois}. The following theorem, which is the central result of this article, is thus a powerful tool to study the dynamics of actions on two-dimensional $\mathrm{CAT}(-1)$ spaces (the result extends in a straightforward manner to $\mathrm{CAT}(\kappa)$ spaces, $\kappa < 0$).

\begin{theoremalph}\label{thm:main_acyl}
Let $G$ be a group acting by simplicial isometries on a two-dimensional piecewise hyperbolic $\mathrm{CAT}(-1)$ simplicial complex $Y$ with finitely many isometry types of simplices. If the action of $G$ on $Y$ is weakly acylindrical, then it is acylindrical.
\end{theoremalph}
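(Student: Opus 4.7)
The plan is, given $r\geq 0$, to produce $L',N'$ such that any $x,y\in Y$ with $d(x,y)\geq L'$ admit at most $N'$ elements $g\in G$ with $d(x,gx),d(y,gy)\leq r$. The strategy is to use $\mathrm{CAT}(-1)$ concentration to convert small displacement into an almost-fixing condition along $[x,y]$, and then apply weak acylindricity.

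\textbf{Propagation and concentration.} The displacement function $d_g(z)=d(z,gz)$ is convex on any $\mathrm{CAT}(0)$ space, so the hypothesis $d_g(x),d_g(y)\leq r$ extends to $d_g\leq r$ on the whole geodesic $[x,y]$. In $\mathrm{CAT}(-1)$, $d_g$ moreover grows at least like $\cosh$ with the distance to $\mathrm{Min}(g)$; hence there is $R=R(r)$, depending only on $r$, such that $[x,y]\subseteq N_R(\mathrm{Min}(g))$ for every such $g$.

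\textbf{Elliptic vs.\ hyperbolic.} If $g$ has a fixed point, then $\mathrm{Min}(g)=\mathrm{Fix}(g)$ is a nonempty convex subcomplex of dimension at most $1$, since a two-dimensional fixed set of a nontrivial simplicial isometry would force $g$ to be trivial. Taking $L'>2R$ rules out $\mathrm{Fix}(g)$ being a single point, and nearest-point projection of $[x,y]$ onto $\mathrm{Fix}(g)$ yields two genuine fixed points of $g$ at distance at least $L'-2R$. Choosing $L'$ to exceed the weak-acylindricity constant by $2R$ places this fixed pair at a distance at which weak acylindricity can be invoked, after using finite isometry types of simplices to reduce to a uniform collection of candidate simplicial fixed pairs. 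If instead $g$ is hyperbolic, $\mathrm{Min}(g)$ is its axis tracking $[x,y]$ at distance $\leq R$ with translation length $\tau(g)\leq r$; here $g$ has no fixed point, so weak acylindricity must be applied to the pointwise stabilizer of a long subsegment of the axis, combined with the fact that simplicial translation lengths along any given axis form a discrete subgroup controlled by the finite isometry types. This should bound the hyperbolic contribution, possibly after passing to a uniformly chosen power $g^M$ that fixes prescribed simplicial cells along the axis.

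\textbf{Main obstacle.} The central difficulty is arranging all bounds to be uniform in $d(x,y)$: a naive count of candidate fixed pairs or axes along $[x,y]$ produces a bound scaling with the length of the geodesic, whereas $N'$ must depend only on $r$. Restricting the candidates to a region of bounded combinatorial size near the endpoints requires a careful interplay between simpliciality, finite isometry types, and $\mathrm{CAT}(-1)$ rigidity. This is particularly delicate in the hyperbolic case, where weak acylindricity cannot be applied to $g$ itself and one must instead work with axis stabilizers together with suitable powers or commutators of $g$.
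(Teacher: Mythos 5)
Your proposal has genuine gaps at its two load-bearing steps, and your final paragraph essentially concedes this. First, the concentration claim ``there is $R=R(r)$ such that $[x,y]\subseteq \mathcal N_R(\mathrm{Min}(g))$'' is false for general $\mathrm{CAT}(-1)$ spaces: a rotation of $\mathbb{H}^2$ by angle $\theta$ about a point $p$ satisfies $d(z,gz)\leq r$ on a ball of radius roughly $\tfrac r2+\log(1/\theta)$ about $p$, and a hyperbolic isometry of translation length $\tau\to 0$ behaves the same way, so no $R$ depending only on $r$ exists. The correct statement in the simplicial, finitely-many-shapes setting is exactly the hard quantitative content of the theorem, so assuming it is circular. (Separately, your assertion that a nontrivial simplicial isometry cannot fix a two-dimensional subcomplex is false -- consider an element fixing one vertex space of a tree of $2$-complexes -- though that error is not fatal to your outline.) Second, even granting concentration, weak acylindricity bounds the number of elements fixing a \emph{given} pair of far-apart points, whereas your elliptic case produces a pair $(p_g,q_g)\subset\mathrm{Fix}(g)$ that varies with $g$; without showing these pairs range over a set of boundedly many possibilities (uniformly in $d(x,y)$), you get a count that grows with the length of $[x,y]$. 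Your hyperbolic case is not an argument at all: weak acylindricity says nothing about fixed-point-free elements, and ``this should bound the hyperbolic contribution, possibly after passing to a power'' leaves the key counting unaddressed.

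The paper's proof supplies precisely the missing mechanism, and it is combinatorial rather than based on $\mathrm{Min}$ sets. After normalising the metric (uniform girth $>2\pi$ in links, acute triangles via an acute retriangulation), one covers $Y$ by open sets $U_\tau$ indexed by simplices and decomposes the geodesic $[x,y]$ accordingly. The technical Lemma \ref{lem:stable_global} shows that any geodesic whose endpoints are $\epsilon$-close to those of a decomposed geodesic $\gamma$ has its gallery contained (up to end simplices) in a slightly enlarged \emph{extended gallery} $Gal^*(\gamma)$. Combining this with a $\mathrm{CAT}(-1)$ fellow-travelling lemma, every $g$ with $d(x,gx),d(y,gy)\leq r$ must map the gallery of a fixed middle subsegment $\gamma'''$ of definite length $L_0$ into $Gal^*(\gamma'_+)$, a complex with at most $C'$ vertices where $C'$ depends only on $r$ and $L_0$. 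There are at most $C'!$ such simplicial maps, and the elements inducing a given one differ by elements fixing $Gal(\gamma''')$ pointwise, hence fixing two points at distance $\geq L_0$; weak acylindricity bounds these by $N_0$, giving $|\St_r(x)\cap\St_r(y)|\leq N_0C'!$. This is the uniform-in-$d(x,y)$ count your sketch is missing, and it treats elliptic and loxodromic elements simultaneously, with no case division.
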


\paragraph{Organisation of the article.} In Section~\ref{sec:preliminaries}, we recall a few basic facts about dihedral Artin groups, whose properties are used to understand the links of vertices of the modified Deligne complex $\Phi$ in a two-dimensional Artin group. In Section~\ref{sec:complex}, we recall the definition of $\Phi$ and endow it with a particular $\mathrm{CAT}(-1)$ metric. In Section~\ref{sec:trees}, we introduce the standard trees, which are the main obstruction to the acylindricity of the action of $A_S$ on $\Phi$. We describe their geometry and use it to construct the coned off space $\Phi^*$, which we show to admit a $\mathrm{CAT}(-1)$ metric. Section~\ref{sec:dynamics} is devoted to the proof of acylindricity Theorems~\ref{thm:main_acyl} and~\ref{thm:acylindrically}, and relies on a fine control of geodesics in a two-dimensional $\mathrm{CAT}(-1)$ simplicial complex. With the acylindricity of the action of $A_S$ on $\Phi^*$, we are then able to prove Corollaries~\ref{thm:Tits Alternative}--\ref{thm:classification_prod}.

\paragraph{Acknowledgements.} We thank Florestan Brunck and the referees for helpful remarks.

\section{Preliminaries on dihedral Artin groups}
\label{sec:preliminaries}

Let $S=\{s,t\}$ with $m=m_{st}<\infty$, and let us consider the dihedral Artin group~$A_S$.  In this section, we recall a few facts about $A_S$. The following is well known, see for example \cite[Lem 4.3(1)]{HJP} for a proof.

\begin{lem}
\label{lem:vertexgroup}
$A_S$ has a finite index subgroup isomorphic to $\mathbb{Z}\times F$, where $F$ is a free group (non-abelian if $m \geq 3$).	
\end{lem}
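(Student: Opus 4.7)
\smallskip

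The case $m=2$ is immediate, since $A_{st}\cong \Z^2=\Z\times\Z$.

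For $m\geq 3$, the plan is to find an infinite cyclic central subgroup with virtually nonabelian free quotient, and then to split the extension. First, I would identify the centre. Let $\Delta=\underbrace{sts\cdots}_m$. A direct application of the braid relation shows that conjugation by $\Delta$ exchanges $s$ and $t$ when $m$ is odd, and fixes each of $s,t$ when $m$ is even. Hence $Z(A_{st})\cong \Z$ is generated by $z:=\Delta^2$ (for $m$ odd) or $z:=\Delta$ (for $m$ even).

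Second, I would compute the quotient $Q:=A_{st}/\langle z\rangle$ by a change of generators. Set $\delta:=st$. For $m$ odd, one rewrites $A_{st}\cong\langle \Delta,\delta\mid \Delta^2=\delta^m\rangle$ (using $s=\delta^{-1}\Delta$, $t=\Delta^{-1}\delta^2$ for $m=3$, and its generalisation for larger odd $m$), so that
\[
Q\cong \langle \Delta,\delta\mid \Delta^2=\delta^m=1\rangle \cong \Z/2 \ast \Z/m.
\]
For $m$ even, using $u:=s$ and $v:=\delta$ the braid relation becomes $[u,v^{m/2}]=1$, hence
\[
Q\cong \langle u,v\mid v^{m/2}=1,\ [u,v^{m/2}]=1\rangle \cong \Z \ast \Z/(m/2).
\]
In either case $Q$ is a free product of cyclic groups with at least one nontrivial finite factor, so $Q$ acts on its Bass--Serre tree with finite vertex stabilisers. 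A torsion-free finite-index subgroup $F\leq Q$ then acts freely on this tree and is free; the hypothesis $m\geq 3$ ensures (e.g.\ by an Euler-characteristic count) that $F$ can be chosen of rank $\geq 2$, hence nonabelian.

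Third, let $\tilde H\subset A_{st}$ be the preimage of $F$ under $A_{st}\twoheadrightarrow Q$. Then $\tilde H$ has finite index in $A_{st}$ and fits into a central extension
\[
1\to \langle z\rangle \to \tilde H\to F\to 1.
\]
Since $F$ is free, $H^2(F,\Z)=0$, so this extension splits. As $\langle z\rangle$ is central in $\tilde H$, the splitting upgrades to a direct product decomposition $\tilde H\cong \Z\times F$, which is the required finite-index subgroup.

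The main obstacle is the change-of-generators step leading to the explicit form of $Q$. The even case is slightly subtler because the braid relation appears in commutator form rather than as an equality of powers, so the centre is not immediately visible until the generators $(u,v)$ have been introduced.
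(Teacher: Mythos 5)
Your proof is correct; the paper itself does not prove this lemma but simply cites \cite[Lem 4.3(1)]{HJP}, and your argument (rewriting $A_{st}$ as $\langle \Delta,\delta\mid \Delta^2=\delta^m\rangle$ for $m$ odd and $\langle s,\delta\mid [s,\delta^{m/2}]=1\rangle$ for $m$ even, quotienting by the centre to get a virtually free group, and splitting the resulting central extension using $H^2(F;\mathbb{Z})=0$) is essentially the standard one found in that reference. The only points worth spelling out are the routine Tietze transformations establishing the alternative presentations and the observation that $z$ has infinite order (e.g.\ via the length homomorphism to $\mathbb{Z}$), but these are exactly as you indicate.
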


We denote $\Delta_{st}=\underbrace{st\cdots}_{m}\in A_S$.

\begin{lem}[{\cite[Thm~4.21]{Del}}]
\label{lem:centre}
Let $m\geq 3$. The centre of $A_S$ is generated by $\Delta_{st}$ for $m$~even and by $\Delta^2_{st}$ for $m$ odd.
\end{lem}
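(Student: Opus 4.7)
The plan breaks into two stages: verifying that $\Delta_{st}$ (respectively $\Delta_{st}^2$) is central, then showing it generates the full centre.

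\emph{Centrality.} Using the defining relation $\Delta_{st}=\underbrace{st\cdots}_m=\underbrace{ts\cdots}_m$, I would compute directly
$$s\Delta_{st}=s\cdot \underbrace{ts\cdots}_m=\underbrace{sts\cdots}_{m+1}.$$
The first $m$ letters on the right form $\Delta_{st}$, and the $(m{+}1)$-th letter is $s$ if $m$ is even and $t$ if $m$ is odd. Thus $s\Delta_{st}=\Delta_{st}s$ for $m$ even and $s\Delta_{st}=\Delta_{st}t$ for $m$ odd; the computation with $t$ is symmetric. It follows that $\Delta_{st}$ is central when $m$ is even, whereas for $m$ odd conjugation by $\Delta_{st}$ exchanges $s$ and $t$, so $\Delta_{st}$ is not central but $\Delta_{st}^2$ is.

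\emph{Structure of the centre.} By Lemma~\ref{lem:vertexgroup}, $A_S$ has a finite-index subgroup $H\cong\mathbb{Z}\times F$ with $F$ nonabelian free. The centre $Z(H)$ is exactly the $\mathbb{Z}$-factor, so $Z(A_S)\cap H\subseteq Z(H)\cong\mathbb{Z}$ has finite index in $Z(A_S)$, making $Z(A_S)$ two-ended. Since $A_S$ is torsion-free (its Salvetti complex, being spherical, is a finite-dimensional $K(\pi,1)$), $Z(A_S)$ is torsion-free as well, hence isomorphic to $\mathbb{Z}$.

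\emph{Primitivity — the main obstacle.} Let $z$ generate $Z(A_S)$, and write $\Delta_{st}=z^k$ (even case) or $\Delta_{st}^2=z^k$ (odd case); the goal is $k=1$. The length homomorphism $\ell\colon A_S\to\mathbb{Z}$ sending $s,t\mapsto 1$ only yields $k\mid m$ or $k\mid 2m$, and projection to $W_S$ rules out only $k$ even in the first case. The cleanest resolution invokes Garside theory: $A_S$ is a Garside group with fundamental element $\Delta_{st}$, and the Brieskorn--Saito--Deligne analysis of the normal form shows that central elements are exactly the powers of $\Delta_{st}$ whose induced permutation of $\{s,t\}$ is trivial. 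Alternatively, one can track through the proof of Lemma~\ref{lem:vertexgroup} and identify the $\mathbb{Z}$-factor of $H$ explicitly as $\langle\Delta_{st}\rangle$ (even) or $\langle\Delta_{st}^2\rangle$ (odd); since this factor is a direct summand of $H$, its generator is primitive in $H$, and one then verifies no root appears in the larger group $A_S$ by using that any putative $k$-th root of $\Delta_{st}^{\pm 1}$ would be central and hence would already lie in $H$ up to a finite power.
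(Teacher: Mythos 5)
First, a point of comparison: the paper does not prove this lemma at all --- it is quoted directly from Deligne \cite[Thm~4.21]{Del} --- so there is no in-paper argument to measure yours against, and I am assessing your proposal on its own terms. Your first two stages are correct: the computation $s\Delta_{st}=\Delta_{st}s$ ($m$ even), $s\Delta_{st}=\Delta_{st}t$ ($m$ odd) is the standard one, and the reduction via Lemma~\ref{lem:vertexgroup} showing that $Z(A_S)$ is an infinite, torsion-free, virtually cyclic --- hence infinite cyclic --- group containing $\Delta_{st}$ (resp.\ $\Delta_{st}^2$) is sound.

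The primitivity step, however, is a genuine gap, and you essentially concede this. Your option (a) --- ``Garside theory shows the central elements are exactly the powers of $\Delta_{st}$ inducing the trivial permutation'' --- \emph{is} the statement being proved, as established by Brieskorn--Saito and Deligne; invoking it is a citation, not a proof (indeed it is exactly the citation the paper makes). Your option (b) does not close as sketched: if $z$ generates $Z(A_S)$ with $z^k=\Delta_{st}$, knowing $Z(H)=\langle\Delta_{st}\rangle$ and that some power $z^j$ lies in $H$ only yields $z^j\in\langle z^k\rangle$, i.e.\ $k\mid j$, which does not force $k=1$. Beware also that $\Delta_{st}$ genuinely admits proper roots in $A_{st}$ --- for $m$ even one has $\Delta_{st}=(st)^{m/2}$ on the nose --- so any argument must exploit the \emph{centrality} of the putative root, not merely its existence.

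There is an elementary way to finish. For $m=2n$ set $a=s$, $b=st$; the Artin relation is equivalent to $[a,b^n]=1$, so $A_{st}\cong\Z^2*_{\Z}\Z$, the amalgam of $\langle a,b^n\rangle\cong\Z^2$ and $\langle b\rangle\cong\Z$ over $\langle b^n\rangle=\langle\Delta_{st}\rangle$. For $m$ odd set $x=\Delta_{st}$, $y=st$; then $A_{st}\cong\langle x,y\mid x^2=y^m\rangle\cong\Z*_{\Z}\Z$, amalgamated over $\langle x^2\rangle=\langle\Delta_{st}^2\rangle$. Since $m\geq 3$, both amalgams are non-degenerate, so the centre is contained in the edge group and consists of those of its elements that are central in both vertex groups; as both vertex groups are abelian, the centre is exactly the edge group, namely $\langle\Delta_{st}\rangle$ resp.\ $\langle\Delta_{st}^2\rangle$. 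With this replacement for your third stage the proof is complete.
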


We therefore denote $z_{st}=\Delta_{st}$ for $m$ even (including the case $m=2$) and $z_{st}=\Delta^2_{st}$ for $m$ odd.

\begin{lem}[{\cite[Lem 7(ii)]{CrispAut}}]
\label{lem:centralizer}
Let $m\geq 3$. For any $k\neq 0$ the centralizer in $A_S$ of $s^k$ is the rank~$2$ abelian group generated by $s$ and $z_{st}$.
\end{lem}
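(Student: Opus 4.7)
The inclusion $\langle s, z_{st}\rangle \subseteq C_{A_S}(s^k)$ is immediate: $s$ commutes with $s^k$, and $z_{st}$ is central by Lemma~\ref{lem:centre}. The plan for both the rank-two claim and the reverse inclusion is to pass to the quotient $\bar A := A_S/\langle z_{st}\rangle$ and analyze the image $\bar s$ of $s$ there.

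The first step is to identify $\bar A$ explicitly as a free product of two cyclic groups. For $m$ odd, $A_S$ is the torus-knot group $\langle a, b \mid a^2 = b^m\rangle$ via $a := \Delta_{st}$ and $b := st$; since $z_{st} = \Delta_{st}^2 = a^2 = b^m$, the quotient is $\bar A \cong \mathbb{Z}/2 \ast \mathbb{Z}/m$. For $m$ even, setting $u := st$ and $v := s$, the braid relation $(st)^{m/2}=(ts)^{m/2}$ is automatic modulo $u^{m/2}=1$, giving $\bar A \cong \mathbb{Z} \ast \mathbb{Z}/(m/2)$. In either case the image $\bar s$ in $\bar A$ is primitive of infinite order: for $m$ odd it is the cyclically reduced length-two word $\bar b^{-1}\bar a$, and for $m$ even it is the generator $\bar v$ of the infinite cyclic free factor.

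The second step is the classical computation of centralizers in a free product: by Bass-Serre theory applied to the action on the tree with trivial edge stabilizers, $C_{\bar A}(\bar s^k) = \langle \bar s\rangle$ for every $k \neq 0$ (a primitive hyperbolic element for $m$ odd, an infinite-order element of a $\mathbb{Z}$-vertex group for $m$ even). From this two conclusions follow. First, $\bar s^k \neq 1$ in $\bar A$ forces $\langle s\rangle \cap \langle z_{st}\rangle = 1$, so $\langle s, z_{st}\rangle \cong \mathbb{Z}^2$. Second, the preimage of $\langle \bar s\rangle$ in $A_S$ is exactly $\langle s, z_{st}\rangle$, so any $g \in C_{A_S}(s^k)$ projects into $\langle \bar s\rangle$ and therefore lies in $\langle s, z_{st}\rangle$. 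The main obstacle is the explicit identification of $\bar A$ as a free product of cyclic groups; once this is in place, everything else is standard Bass-Serre theory.
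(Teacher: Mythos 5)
Your proof is correct. Note that the paper does not prove this lemma at all --- it is quoted from Crisp \cite{CrispAut}*{Lem 7(ii)} --- so there is no argument to compare against; your write-up supplies the standard proof, via the standard identifications $A_{st}\cong\langle a,b\mid a^2=b^m\rangle$ ($m$ odd) and $A_{st}\cong\langle u,v\mid [v,u^{m/2}]=1\rangle$ ($m$ even), followed by the computation of centralizers in the virtually free quotient $A_{st}/\langle z_{st}\rangle$ using the Bass--Serre tree with trivial edge stabilisers. One small inaccuracy: for $m$ odd the image of $s$ is $\bar b^{-(m-1)/2}\bar a$, not $\bar b^{-1}\bar a$ (these agree only for $m=3$). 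This does not affect the argument, since $1\le (m-1)/2<m$ guarantees the word is still cyclically reduced of syllable length two, hence hyperbolic on the Bass--Serre tree with translation length $2$ and therefore primitive, which is all the centralizer computation needs. For completeness you should also record that $z_{st}$ has infinite order (e.g.\ its image under the homomorphism $A_{st}\to\mathbb{Z}$ sending both generators to $1$ is $m$ or $2m$), so that $\langle s\rangle\cap\langle z_{st}\rangle=1$ really yields $\langle s,z_{st}\rangle\cong\mathbb{Z}^2$ rather than $\mathbb{Z}\times(\text{finite})$.
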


\begin{lem}[{\cite[Lem~6]{AS}}]
\label{lem:AS}
Let $n<m$. A word with $2n$ syllables (i.e.\ of form $s^{i_1}t^{j_1}\cdots s^{i_n}t^{j_n}$ with all $i_k,j_k\in \Z-\{0\}$) is non-trivial in $A_S$.
\end{lem}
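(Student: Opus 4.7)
The plan is to follow the classical approach of Appel and Schupp, using small cancellation theory applied to the one-relator presentation of $A_S$. Let $R = \underbrace{sts\cdots}_m \cdot (\underbrace{tst\cdots}_m)^{-1}$ be the defining relator, a cyclically reduced word of length $2m$ in the free group $F(s,t)$. Crucially, every syllable of $R$ has exponent $\pm 1$, so $R$ is an \emph{alternating} word in the sense that it never contains $s^{\pm 2}$ or $t^{\pm 2}$ as a subword. Let $\mathcal{R}$ denote the symmetrized closure of $R$ (all cyclic permutations of $R^{\pm 1}$); every element of $\mathcal{R}$ is again alternating of length $2m$.

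The first step is to analyse the pieces of $\mathcal{R}$, namely maximal common initial subwords of two distinct elements of $\mathcal{R}$. Since every $r\in\mathcal{R}$ is alternating, every piece is also alternating, and a direct case inspection (writing out the cyclic permutations explicitly and matching them letter by letter) shows that any piece has length at most $m-1$. This gives the symmetrized presentation a suitable small-cancellation property for the Greendlinger/van Kampen machinery to apply.

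The second step is a van Kampen diagram argument. Assuming for contradiction that $w=s^{i_1}t^{j_1}\cdots s^{i_n}t^{j_n}$ with all $i_k,j_k\in\Z-\{0\}$ represents the identity, I would take a reduced van Kampen diagram $D$ with boundary label $w$. By the small cancellation condition, some 2-cell $F$ of $D$ meets $\partial D$ in an arc whose label accounts for more than half of $\partial F$; that is, $w$ contains as a subword an alternating segment of length at least $m+1$. The point is then that along such a segment, consecutive letters of $w$ must lie in \emph{distinct} syllables of $w$ (because the segment is alternating while each syllable of $w$ is a run of a single generator). Hence $w$ contains at least $m+1$ distinct syllables along this segment, contradicting the assumption that $w$ has only $2n<2m$ syllables in total, once the counting is done correctly; in the borderline case $n=m-1$ one must use that the alternating segment detected by Greendlinger has length strictly greater than $m$, not just $m$.

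The main obstacle is the combinatorial bookkeeping in the van Kampen diagram step: a syllable $s^{i_k}$ with $|i_k|\gg 1$ inflates the \emph{letter} length of $w$ far beyond its syllable count, so one has to track carefully that it is the \emph{syllable count} of the long alternating subword detected in $w$ (not its letter length) that produces the contradiction with $n<m$. I expect the first step, verifying the bound $m-1$ on pieces by comparing cyclic permutations of $R$ with $R^{-1}$, to be routine but requires the case split between $m$ even and $m$ odd because the cyclic structure of $R$ differs slightly in the two parities.
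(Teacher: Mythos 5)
This lemma is not proved in the paper at all---it is quoted from Appel--Schupp \cite{AS}*{Lem~6}---so your attempt is a from-scratch proof of a cited result, and it has to be judged on its own. Your first step is correct: each of the four ``junction'' two-letter subwords between the positive and negative halves of $R^{\pm1}$ occurs exactly once among the two cyclic words $R$, $R^{-1}$, so every piece is a common subword of $\underbrace{sts\cdots}_{m}$ and $\underbrace{tst\cdots}_{m}$ (or of their inverses) and has length at most $m-1$. But this is precisely where the argument collapses: a relator of length $2m$ with pieces of length up to $m-1$ satisfies only $C'(\lambda)$ for $\lambda=\frac{m-1}{2m}$, which tends to $\tfrac12$, and non-metrically only $C(4)$ (each half of $R$ splits into exactly two pieces, e.g.\ $\underbrace{st\cdots}_{m-1}$ followed by one letter, so $C(5)$ already fails). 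None of these hypotheses licenses the Greendlinger conclusion you invoke. The statement ``some $2$-cell meets $\partial D$ in an arc accounting for more than half of $\partial F$'' requires $C'(1/6)$, or $C'(1/4)$ together with $T(4)$, i.e.\ pieces of length $<m/3$, resp.\ $<m/2$---false here for every $m\geq 2$. The $C(4)$--$T(4)$ form of Greendlinger (even granting $T(4)$, which you have not checked) only yields a boundary face whose exterior arc has length at least $2m-2(m-1)=2$, which gives you nothing.

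The second gap is in the final count, and your own hedging (``once the counting is done correctly'') signals it. Suppose you were handed the strong conclusion: $w$ contains an alternating subword of length $m+1$. Since consecutive letters of an alternating word lie in distinct consecutive syllables of $w$, this subword occupies $m+1$ syllables---but $m+1\leq 2n$ is perfectly consistent with $n\leq m-1$ whenever $m\geq 3$, so there is no contradiction except for $m=2$. The bound is sharp (the relator itself is an alternating trivial word with exactly $2m$ syllables), so any van Kampen argument must extract essentially a \emph{whole} relator's worth of syllables from $\partial D$, which a single Greendlinger face can never provide; one would need a genuine curvature/area count over all boundary faces, and that is a different (and much more delicate) argument from the one you sketch. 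The standard proofs bypass this entirely by using the algebraic structure of $A_{st}$: for $m$ odd, $A_{st}\cong\langle x,y\mid x^2=y^m\rangle\cong\Z*_{\Z}\Z$ (with $x=\Delta_{st}$, $y=st$), and for $m$ even, $A_{st}\cong\langle x,y\mid [x,y^{m/2}]=1\rangle\cong(\Z\times\Z)*_{\Z}\Z$; the normal form theorem for amalgams, after translating the syllable condition on $s,t$ into the amalgam alphabet, yields the statement. As written, your proof has a genuine gap at the assertion of a usable small cancellation condition, and a second one in the concluding syllable count.
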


\section{Modified Deligne complex and its geometry}
\label{sec:complex}
Let $A_S$ be a two-dimensional Artin group. For $s,t\in S$ satisfying $m_{st}<\infty$, let $A_{st}$ be the dihedral Artin group with $m=m_{st}$. For $s\in S$, let $A_s=\Z$.

Let $K$ be the following simplicial complex. The vertices of $K$ correspond to subsets $T\subseteq S$ satisfying $|T|\leq 2$ and, in the case where $|T|=2$ with $T=\{s,t\}$, satisfying $m_{st}<\infty$. We call $T$ the \emph{type} of its corresponding vertex. Vertices of types $T,T'$ are connected by an edge of $K$, if we have $T\subsetneq T'$ or vice versa. Similarly, three vertices span a triangle of $K$, if they have types $\emptyset, \{s\}, \{s,t\}$ for some $s,t\in S$.

We give $K$ the following structure of a simple complex of groups $\mathcal K$ (see \cite[\S II.12]{BH} for background). The vertex groups are trivial, $A_s$, or $A_{st}$, when the vertex is of type $\emptyset,\{s\},\{s,t\}$, respectively. For an edge joining a vertex of type $\{s\}$ to a vertex of type $\{s,t\}$, its edge group is $A_s$; all other edge groups and all triangle groups are trivial. All inclusion maps are the obvious ones. It follows directly from the definitions that $A_S$ is the fundamental group of $\mathcal K$.

\subsection{Modified Deligne complex}
\label{subs:Deligne}
Assume now that $A_S$ is of hyperbolic type. We will equip $K$ with a $\mathrm{CAT}(-1)$ metric, which is inspired by the $\mathrm{CAT}(-1)$ construction of Moussong for Coxeter groups \cite[\S13]{Mou}. However, our construction is new. In particular, we make  some of the angles larger than Moussong, in order to prepare the construction of the coned off space in Definition~\ref{def:cone_off}.

Let $\eps>0$ be small enough so that for all $m\geq 2$ we have
\begin{equation*}
\Big(\frac{\pi}{2}+\eps\Big)+\Big(\frac{\pi}{2m}+\eps\Big)\leq\pi-\eps.
\tag{$\spadesuit$}
\end{equation*}
For any $m\geq 2$, let $\theta(m,\eps)$ be the third angle of any Euclidean triangle with angles $\frac{\pi}{2}+\eps,\frac{\pi}{2m}+\eps$. In other words, $\theta(m,\eps)=\frac{(m-1)\pi}{2m}-2\eps\geq \eps$.

\begin{lem}
\label{lem:eps}
There exists $\eps$ satisfying $(\spadesuit)$ and such that for any cycle $(s_i)$ in the defining graph of any Artin group of hyperbolic type, for $m_i=m_{s_is_{i+1}}$, we have
$\sum_i(\theta(m_i,\eps)-\eps)\geq \pi+\eps$.
\end{lem}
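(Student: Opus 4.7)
My plan is to establish the lemma by a direct quantitative estimate. I first reformulate the desired inequality. Since $\theta(m,\eps) = \frac{\pi}{2} - \frac{\pi}{2m} - 2\eps$, for a cycle $(s_i)$ of length $k$ the required inequality $\sum_i (\theta(m_i,\eps) - \eps) \geq \pi + \eps$ is equivalent, after clearing the common factor $\frac{\pi}{2}$, to
\[
k - \sum_i \frac{1}{m_i} \;\geq\; 2 + \frac{2(3k+1)\eps}{\pi}. \qquad (\star)
\]
So I need a uniform lower bound on $k - \sum_i \frac{1}{m_i}$, valid over all cycles in all Artin groups of hyperbolic type, and then pick $\eps$ sufficiently small.

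To bound $\sum_i \frac{1}{m_i}$ from above, I split into cases. For $k=3$, the hyperbolic type condition gives $\sum \frac{1}{m_i} < 1$. A short case analysis among unordered triples of integers $m_i \geq 2$ identifies the maximum of $\sum \frac{1}{m_i}$ subject to this constraint as $\frac{41}{42}$, attained by the $(2,3,7)$ triangle; hence $3 - \sum \frac{1}{m_i} \geq 2 + \tfrac{1}{42}$. For $k \geq 4$, applying the hyperbolic type condition to consecutive triples $s_i, s_{i+1}, s_{i+2}$ and discarding the nonnegative term $\frac{1}{m_{s_i s_{i+2}}}$ yields $\frac{1}{m_i} + \frac{1}{m_{i+1}} < 1$, which forbids two consecutive $m_i$'s from both equaling $2$. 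Letting $j$ denote the number of indices with $m_i = 2$, non-consecutivity in a cycle of length $k$ forces $j \leq \lfloor k/2 \rfloor$, and estimating each remaining $m_i$ by $\frac{1}{m_i} \leq \tfrac{1}{3}$ gives
\[
\sum_i \frac{1}{m_i} \;\leq\; \tfrac{j}{2} + \tfrac{k-j}{3} \;=\; \tfrac{j + 2k}{6} \;\leq\; \tfrac{5k}{12},
\]
so $k - \sum_i \frac{1}{m_i} \geq \tfrac{7k}{12} \geq \tfrac{7}{3} = 2 + \tfrac{1}{3}$.

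Finally, I choose any $\eps > 0$ small enough, e.g.\ $\eps = \pi/1000$. For $k = 3$, $(\star)$ reduces to $\tfrac{1}{42} \geq \tfrac{20\eps}{\pi} = \tfrac{1}{50}$, which holds. For $k \geq 4$, the left-hand side of $(\star)$ grows linearly in $k$ with slope $\geq \tfrac{7}{12}$ while the right-hand side grows with slope $\tfrac{6\eps}{\pi} \ll \tfrac{7}{12}$, so $(\star)$ holds there as well. Condition $(\spadesuit)$ becomes $3\eps \leq \tfrac{\pi}{2} - \tfrac{\pi}{2m}$, which is at worst $3\eps \leq \tfrac{\pi}{4}$ and is clearly satisfied. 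The only delicate point is the tightness of the $k = 3$ case: the hyperbolic $(2,3,7)$ triple gives a tiny universal gap of $\tfrac{1}{42}$, which forces $\eps$ to be small, but poses no real obstacle since any explicit constant such as $\pi/1000$ works uniformly across all hyperbolic type Artin groups.
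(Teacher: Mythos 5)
Your proof is correct and follows essentially the same strategy as the paper's: a case split on the cycle length $k$, with the $(2,3,7)$ triangle providing the extremal bound $\sum_i 1/m_i \leq 41/42$ for $3$-cycles and easy uniform estimates for longer cycles, followed by taking $\eps$ small. The only (harmless) difference is that you handle all $k\geq 4$ at once via the observation that no two consecutive labels equal $2$, whereas the paper splits into $k=4$ (at least one $m_i\geq 3$) and $k\geq 5$ (trivial term-by-term bound).
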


\begin{proof}
\textbf{Step 1.} There is $\eps$ such that for each hyperbolic triangle group with exponents $(m,m',m'')$ we have
$\theta(m,\eps)+\theta(m',\eps)+\theta(m'',\eps)\geq \pi+4\eps$.

In order to prove this, let $\eps$ be such that $\frac{1}{m}+\frac{1}{m'}+\frac{1}{m''}\leq1-24\frac{\eps}{\pi}$ for $(2,3,7),(2,4,5)$ and $(3,3,4)$ triangle groups. Since the exponents $(m,m',m'')$ of any hyperbolic triangle group dominate the exponents of one of these three, the same inequality holds for all hyperbolic triangle groups. Consequently,
\begin{align*}
\theta(m,\eps)+\theta(m',\eps)+\theta(m'',\eps)=&\frac{(m-1)\pi}{2m}+\frac{(m'-1)\pi}{2m'}+\frac{(m''-1)\pi}{2m''}-6\eps=\\
&\frac{3\pi}{2}-\Big(\frac{\pi}{2m}+\frac{\pi}{2m'}+\frac{\pi}{2m''}\Big)-6\eps\geq \\ &\frac{3\pi}{2}-\Big(\frac{\pi}{2}-12\eps\Big)-6\eps\geq \pi+6\eps.\\
\end{align*}

\noindent \textbf{Step 2.} There is $\eps$ such that for any $4$-cycle we have $\sum_i(\theta(m_i,\eps)-\eps)\geq \pi+ \eps$.

Indeed, since $A_S$ is of hyperbolic type, at least one $m_i$ is $\geq 3$, and consequently $\sum_i\theta(m_i,\eps)\geq 3\frac{\pi}{4}+\frac{\pi}{3}-8\eps$. Hence it suffices to take $(8+4+1)\eps\leq \frac{\pi}{12}$.

\smallskip

\noindent \textbf{Step 3.} There is $\eps$ such that for any $k$-cycle with $k\geq 5$ we have $\sum_i(\theta(m_i,\eps)-\eps)\geq\eps$.

Indeed, we have $\sum_i\theta(m_i,\eps)\geq 5(\frac{\pi}{4}-2\eps)$. Hence it suffices to take $(10+5+1)\eps\leq\frac{\pi}{4}$.
\end{proof}

From now on, we fix any $\eps$ satisfying Lemma~\ref{lem:eps}.

\begin{lem}
\label{lem:r}
Given a finite set $M\subset \{2,3,\ldots\}$, for sufficiently small $l>0$ we have that for any $m\in M$ there exists a hyperbolic triangle $vv'v''$ with angles $\an v''=\frac{\pi}{2m}+\eps, \an v'=\frac{\pi}{2}+\eps, \an v \geq\theta(m,\eps)-\eps,$ and $|vv'|=l$ (see Figure~\ref{fig:triangle}).
\end{lem}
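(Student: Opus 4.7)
The plan is to handle each $m\in M$ separately and use finiteness of $M$ at the end. The argument for a single $m$ is a continuity/degeneration statement: in a hyperbolic triangle with two prescribed angles $\frac{\pi}{2m}+\eps$ at $v''$ and $\frac{\pi}{2}+\eps$ at $v'$, shrinking the opposite side $|vv'|$ makes the triangle infinitesimally Euclidean, so the third angle $\an v$ tends to the value $\theta(m,\eps)$ it would take in the Euclidean model. In particular, $\an v$ eventually exceeds $\theta(m,\eps)-\eps$.

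More concretely, fix $m\in M$ and write $A=\frac{\pi}{2m}+\eps$, $B=\frac{\pi}{2}+\eps$. By $(\spadesuit)$ we have $A+B\leq\pi-\eps$, so the interval $(0,\pi-A-B)=(0,\theta(m,\eps))$ of allowable third angles is nonempty. For each $C$ in this interval there is a unique hyperbolic triangle (up to isometry) with angles $A,B,C$, and the hyperbolic law of cosines expresses the length $c$ of the side opposite $C$ as
$$\cosh c=\frac{\cos C+\cos A\cos B}{\sin A\sin B},$$
a continuous strictly decreasing function of $C$. As $C\to\theta(m,\eps)^-=(\pi-A-B)^-$, one has $\cos C\to -\cos(A+B)=-\cos A\cos B+\sin A\sin B$, so $\cosh c\to 1$ and $c\to 0$.

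Hence for each $m\in M$ there exists $l_m>0$ such that for every $l\in(0,l_m]$, the unique hyperbolic triangle with angles $A$ at $v''$, $B$ at $v'$, and $|vv'|=l$ satisfies $\an v\geq\theta(m,\eps)-\eps$. Taking $l\leq \min_{m\in M}l_m$, positive by finiteness of $M$, yields the lemma.

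I do not foresee a serious obstacle: the content is essentially the continuity of the angle--side correspondence for hyperbolic triangles together with the elementary degeneration $\cosh c\to 1$ as $C$ tends to its maximal (Euclidean) value $\pi-A-B$. If one preferred to avoid the explicit law of cosines, an equivalent route is Gauss--Bonnet, which identifies the area of a hyperbolic triangle with $\pi-A-B-C$ and therefore forces the triangle to collapse as $C\to(\pi-A-B)^-$.
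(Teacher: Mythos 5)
Your overall strategy---parametrise the triangle by its third angle $C$, show via the dual hyperbolic law of cosines (or Gauss--Bonnet) that the triangle degenerates as $C\to(\pi-A-B)^-$, and then invert---is sound, and it is genuinely different from the paper's proof, which fixes $|vv'|=l$ from the start, constructs the triangle synthetically by sliding $v''$ along a ray emanating from $v'$ at angle $\frac{\pi}{2}+\eps$, and only then lets $l\to 0$ and applies Gauss--Bonnet to the (vanishing) area. However, there is a concrete slip that breaks your final deduction as written: you compute the wrong side. The formula $\cosh c=\frac{\cos C+\cos A\cos B}{\sin A\sin B}$ gives the side \emph{opposite} $C=\an v$, i.e.\ the side $v'v''$, whereas the lemma prescribes $|vv'|=l$, the side opposite $v''$ (where the angle is $A=\frac{\pi}{2m}+\eps$). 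Your continuity, monotonicity and degeneration statements therefore control $|v'v''|$, and the sentence ``the unique hyperbolic triangle with angles $A$ at $v''$, $B$ at $v'$, and $|vv'|=l$ satisfies $\an v\geq\theta(m,\eps)-\eps$'' does not follow from the displayed computation---neither the existence/uniqueness of such a triangle nor the angle bound.

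The repair is routine but not free. For the correct side one has $\cosh|vv'|=\frac{\cos A+\cos B\cos C}{\sin B\sin C}$; this still tends to $1$ as $C\to(\pi-A-B)^-$ (a short computation using $\cos C\to-\cos(A+B)$ and $\sin C\to\sin(A+B)$), but its strict monotonicity in $C$ is no longer the trivial ``numerator decreasing, denominator constant'' observation: differentiating reduces it to $\cos B+\cos A\cos C>0$, which does hold here but uses the specific ranges $B=\frac{\pi}{2}+\eps$ and $A+\eps<\frac{\pi}{2}$ coming from $(\spadesuit)$. Alternatively, you can avoid monotonicity altogether: the corrected limit shows $|vv'|\to 0$ as $C\to(\pi-A-B)^-$, so an intermediate value argument applied to the continuous map $C\mapsto|vv'|$ on $[\theta(m,\eps)-\eps,\;\theta(m,\eps))$ produces, for every sufficiently small $l$, a triangle with $|vv'|=l$ and $\an v\geq\theta(m,\eps)-\eps$---which is all the lemma asserts, since it claims existence rather than uniqueness. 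With either fix, and the final passage to $\min_{m\in M}l_m$, the argument is complete.
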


\begin{proof}
We claim that for any $l>0$ and any $m\in \{2,3,\ldots\}$ there is a hyperbolic triangle $vv'v''$ with
$\an v''=\frac{\pi}{2m}+\eps, \an v'=\frac{\pi}{2}+\eps$, and $|vv'|=l$. Indeed, fix $v,v'$ at distance $l$ and a half-line at angle $\frac{\pi}{2}+\eps$ to $vv'$ at $v'$. Varying $v''$ along that half-line we can achieve any angle at $v''$ between $0$ and $\frac{\pi}{2}-\eps$. By $(\spadesuit)$, we have
$\frac{\pi}{2m}+\eps< \frac{\pi}{2}-\eps$, justifying the claim.

Denote $d(l,m)=|v'v''|$. It is easy to see that for fixed $m\in M$ and $l\to 0$, we have $d(l,m)\to 0$. Thus for sufficiently small $l$, the area of the triangle $vv'v''$ is arbitrarily small, hence by the Gauss--Bonnet Theorem so is its defect and thus $\an v \geq \theta(m,\eps)-\eps$. Since $M$ is finite, for sufficiently small $l$ this holds for all $m\in M$ simultaneously.
\end{proof}

To choose $l$ appropriately we need the following, the role of which will become clear in Section~\ref{sec:trees}.

\begin{rem}
\label{rem:cone}
In a right-angled hyperbolic triangle with legs of lengths $1$ and $d$, for $d$ sufficiently small, the other angle at the length $d$ leg is $\geq \frac{\pi}{2}-\eps$.
\end{rem}

We fix arbitrary $l>0$ satisfying Lemma~\ref{lem:r} and with $d=d(l,m)$ satisfying Remark~\ref{rem:cone},
for all exponents $m=m_{st}$ of $A_S$. We now equip $K$ with a piecewise hyperbolic metric. Let $\tau$ be a triangle of $K$ with vertices $v,v',v''$ of types $\emptyset,\{s\},\{s,t\}$, respectively. We equip $\tau$ with the metric of the unique hyperbolic triangle from Lemma~\ref{lem:r}, with $m=m_{st}$.

\begin{figure}[H]
	\begin{center}
		\scalebox{0.75}{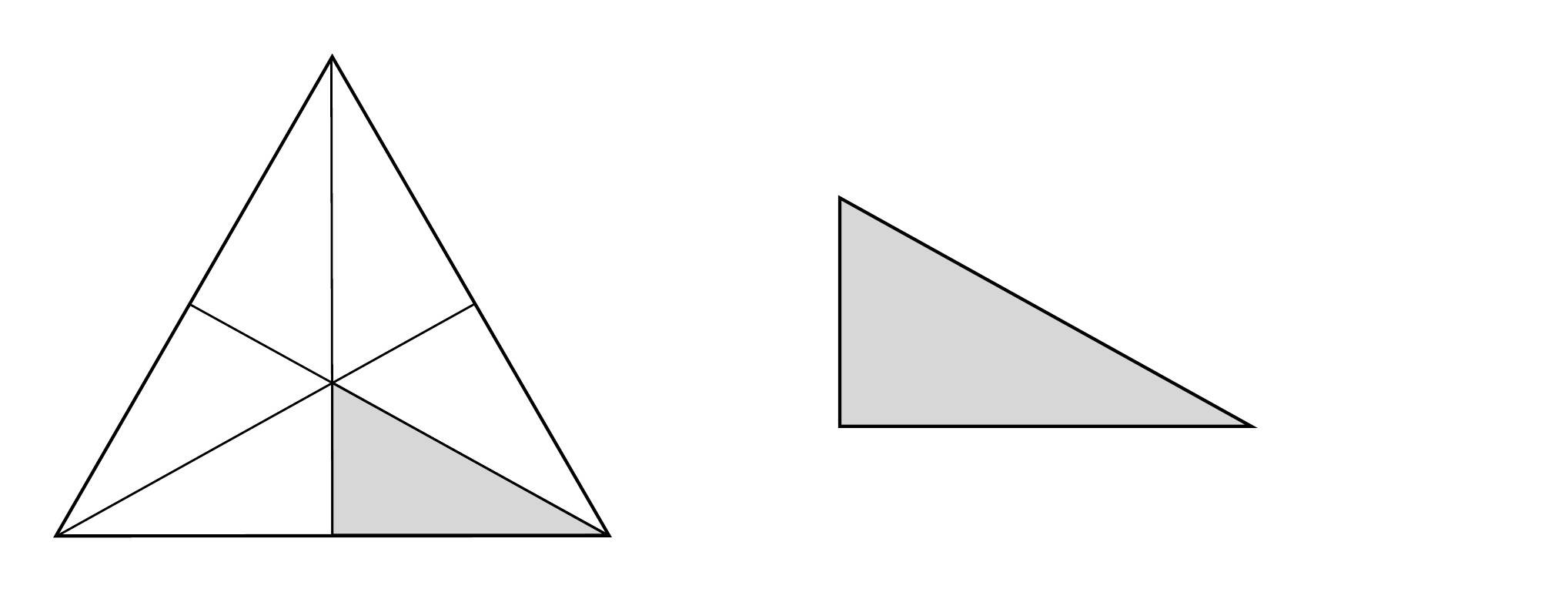}
		\caption{On the left-hand side, a picture of the fundamental domain $K$ for an Artin group on three generators $s, t, r$, with all $m_{st},m_{sr},m_{tr}<\infty$. The triangle spanned on the vertices of type $\emptyset, \{s\}$, and $\{s, t\}$ is shaded. On the right-hand side, the corresponding choice of lengths and angles for that triangle.}
		\label{fig:triangle}
	\end{center}
\end{figure}

Note that this choice is consistent on the edges, since two triangles sharing an edge either share a vertex of type $\{s,t\}$ and are thus congruent, or they share an edge with vertices of types $\emptyset,\{s\}$, which is of common length $l$.

For each vertex $v$ of $K$, let $\mathrm{St}(\tilde{v})$ be the local development at $v$ of~$\mathcal{K}$. The vertex of $\mathrm{St}(\tilde{v})$ corresponding to $v$ is labelled $\tilde{v}$. See \cite[\S~II.12.24]{BH}.

\begin{lem}
\label{lem:nonpositive_curvature}
For each vertex $v$ of $K$, the link of $\tilde v$ in $\mathrm{St}(\tilde{v})$ has girth $\geq 2\pi+\eps$.
\end{lem}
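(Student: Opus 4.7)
The plan is to handle the three types of vertex $v$ of $K$ separately, each time identifying the link of $\tilde v$ in $\mathrm{St}(\tilde v)$ explicitly from the complex-of-groups data $\mathcal K$. In each case the link turns out to be a bipartite graph whose edges carry one of the three kinds of angle appearing in a triangle of $K$, so the problem reduces to locating the shortest combinatorial cycle. The easiest case is $v$ of type $\emptyset$: the vertex group is trivial, so $\mathrm{St}(\tilde v)$ is isometric to $\mathrm{St}(v)\subset K$, and the link of $v$ in $K$ is (a metric version of) the barycentric subdivision of the defining graph of $A_S$, with link edges of length $\geq \theta(m_{st},\eps)-\eps$. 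A simple cycle therefore encodes a cycle $s_1,\ldots,s_k$ in the defining graph and has total length $2\sum_i(\theta(m_i,\eps)-\eps)\geq 2\pi+2\eps$ by Lemma~\ref{lem:eps}.

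For $v$ of type $\{s\}$, the vertex group $A_v=A_s\cong \Z$ equals the edge group of each edge to a vertex of type $\{s,t\}$, while the edge to the vertex of type $\emptyset$ and all triangles have trivial groups. Consequently $\tilde v$ has a single link-vertex for each neighbour $t$ of $s$ in the defining graph together with $\Z$-many link-vertices coming from the lifts of the type-$\emptyset$ vertex, and each triangle of $K$ lifts to a $\Z$-family contributing link edges of length $\pi/2+\eps$ between a $t$-vertex and a $\Z$-vertex. The link is thus the complete bipartite graph on $\{t:m_{st}<\infty\}$ and $\Z$, so its shortest simple cycle, if any, is a $4$-cycle of length $2\pi+4\eps$ (and there is no cycle at all if $s$ has at most one neighbour in the defining graph).

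The main obstacle is the case $v$ of type $\{s,t\}$, where $A_v=A_{st}$ is infinite, the link has infinite-degree vertices, and we must translate link cycles into relations in $A_{st}$. An analogous coset analysis shows that the link is bipartite, with vertex set $(A_{st}/A_s)\sqcup(A_{st}/A_t)$ on one side and $A_{st}$ on the other (from the trivial-group lifts of the type-$\emptyset$ vertex), every edge of length $\pi/(2m)+\eps$, and each $g\in A_{st}$ joined to $gA_s$ and $gA_t$ via the lifts $g\tau_s$ and $g\tau_t$ of the two triangles of $K$ at $v$. A simple non-backtracking closed loop based at some $g_0\in A_{st}$ must strictly alternate between $A_s$- and $A_t$-cosets (otherwise a coset vertex is revisited), so it encodes an identity $x_0^{k_0}x_1^{k_1}\cdots x_{N-1}^{k_{N-1}}=1$ in $A_{st}$ with $x_i\in\{s,t\}$ strictly alternating and each $k_i\neq 0$; strict alternation forces $N=2n$ to be even, and Lemma~\ref{lem:AS} then forces $n\geq m$. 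The cycle therefore contains $2N\geq 4m$ link edges, giving total length $\geq 4m\bigl(\pi/(2m)+\eps\bigr)=2\pi+4m\eps\geq 2\pi+8\eps$. I expect this last step, translating non-backtracking closed loops in the link into non-trivial alternating words in the dihedral Artin group and invoking the syllable bound, to be the principal subtlety; the other two cases are essentially bookkeeping once the link is correctly identified.
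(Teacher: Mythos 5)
Your proof is correct and follows essentially the same route as the paper: the same three-case split by vertex type, Lemma~\ref{lem:eps} for the type-$\emptyset$ link, bipartiteness with edge length $\frac{\pi}{2}+\eps$ for type $\{s\}$, and Lemma~\ref{lem:AS} applied to the coset graph (the development $D$ of the edge of groups $A_s \,\text{---}\, A_t$) for type $\{s,t\}$. You simply make explicit the coset description of the local development and the translation of simple cycles into alternating syllable words, which the paper leaves implicit in citing Lemma~\ref{lem:AS}.
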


\begin{proof}
As in \cite{CD}, the idea is to appeal to Lemma~\ref{lem:AS}.

At $v$ of type $\emptyset$, $\mathrm{St}(\tilde{v})$ coincides with $K$. The link of $v$ in $K$ coincides with the barycentric subdivision of the defining graph with the length of the edge $(s,t)$ being $\geq 2(\theta(m,\eps)-\eps)$ (Lemma~\ref{lem:r}). Hence the lemma follows from Lemma~\ref{lem:eps}.

Suppose now that $v$ has type $\{s\}$. Then the link of $\tilde v$ in $\mathrm{St}(\tilde{v})$ is a bipartite graph of edge length $\frac{\pi}{2}+\eps$ and the lemma follows as well.

Finally, suppose that $v$ has type $\{s,t\}$ with $m=m_{st}$. Then the link of $\tilde v$ in $\mathrm{St}(\tilde{v})$
is the barycentric subdivision of the following graph $D$. Namely, consider the edge of groups $\mathcal I$ with vertex groups $A_s$ and $A_t$, trivial edge group, and the length of the underlying edge $2(\frac{\pi}{2m}+\eps)$. Consider the obvious morphism of $\mathcal I$ into $A_{st}$. Then $D$ is the development of $\mathcal I$ associated to that morphism. Thus it suffices to show that $D$ has girth $2m$. This is exactly Lemma~\ref{lem:AS}.
\end{proof}

By~\cite[Thm~II.12.28]{BH} we obtain the following (which is also a consequence of \cite[Thm 4.13]{L}).

\begin{cor}
$\mathcal{K}$ is strictly developable.
\end{cor}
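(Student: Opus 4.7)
The plan is to apply directly the Cartan--Hadamard-type developability criterion for complexes of groups \cite[Thm~II.12.28]{BH}, which reduces strict developability to a local non-positive curvature condition at each vertex. Since the metric on $K$ is piecewise hyperbolic (in particular, of curvature $\leq -1 < 0$), it suffices to verify that the local development $\mathrm{St}(\tilde v)$ at each vertex $v$ is locally CAT($-1$), or equivalently, that the link of $\tilde v$ in $\mathrm{St}(\tilde v)$ is CAT($1$).

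Each such link is a metric graph, and the CAT($1$) condition for metric graphs is well known to be equivalent to the requirement that the girth is at least $2\pi$ (see \cite[Thm~II.5.4]{BH}, for instance). This is exactly what Lemma~\ref{lem:nonpositive_curvature} provides: the girth is bounded below by $2\pi+\eps > 2\pi$. Hence the local developments are CAT($-1$), the hypotheses of \cite[Thm~II.12.28]{BH} are satisfied, and strict developability follows.

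I do not foresee any real obstacle here: the content of the corollary is entirely contained in the combination of Lemma~\ref{lem:nonpositive_curvature} with a standard black-box theorem. The only point requiring a sentence of care is to record that the piecewise hyperbolic metric is consistent across shared faces (which was already observed in the text immediately after the choice of $l$), so that one genuinely has a metric complex of groups to which \cite[Thm~II.12.28]{BH} applies.
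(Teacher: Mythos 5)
Your proposal is correct and follows exactly the paper's route: the corollary is obtained by feeding Lemma~\ref{lem:nonpositive_curvature} (girth of vertex links in the local developments $\geq 2\pi+\eps$) into the nonpositive-curvature developability criterion \cite[Thm~II.12.28]{BH}. The only cosmetic difference is that you spell out the intermediate reduction (local development locally $\mathrm{CAT}(-1)$ $\Leftrightarrow$ link $\mathrm{CAT}(1)$ $\Leftrightarrow$ girth $\geq 2\pi$ for metric graphs), which the paper leaves implicit.
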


\begin{defin}
\label{def:Deligne}
The development of $\mathcal{K}$ is called the \emph{modified Deligne complex} \cite{CD} and is denoted $\Phi$.
\end{defin}

Note that $\Phi$ is a triangle complex with a cocompact action of $A_S$. Its vertex stabilisers are trivial or conjugates of $A_s$ and~$A_{st}$, depending on their type, and its edge stabilisers are trivial or conjugates of~$A_s$.
In particular, all $A_s$ and $A_{st}$ with $m_{st}<\infty$ map injectively into $A_S$. Furthermore, $\Phi$ has finitely many isometry types of simplices and thus it is complete by \cite[Thm~I.7.19]{BH}. By \cite[Thm~II.12.28]{BH}, $\Phi$
is $\mathrm{CAT}(-1)$. Vertices of $\Phi$ inherit types from the types of the vertices of $K$.

\subsection{Non-hyperbolic case}
\label{sec:nonhyp}
Here we drop the hypothesis that $A_S$ is of hyperbolic type. Let $\mathcal K$ be the same complex of groups as before with the metric on each triangle being Euclidean with angles $\frac{\pi}{2}, \frac{\pi}{2m_{st}}, \frac{(m_{st}-1)\pi}{2m_{st}}=\theta(m_{st},0)$. Setting $\eps=0$, the same arguments as before give that the local developments of $\mathcal K$ are $\mathrm{CAT}(0)$ and hence $\mathcal K$ is strictly developable and its development $\Phi$ exists and is $\mathrm{CAT}(0)$. See \cite{CD} for detailed proof and the description of this piecewise Euclidean \emph{Moussong metric} in general.

\section{Standard trees and the coned off space $\Phi^*$}
\label{sec:trees}

Let $A_S$ be a two-dimensional Artin group, possibly not of a hyperbolic type. Let $\Phi_\mathcal{T}\subset \Phi$ be the subcomplex that is the union of all the edges of $\Phi$ joining vertices of type $\{s,t\}$ and $\{s\}$ for all $s,t\in S$.
Let $r\in S$ and let $T$ be the fixed-point set in $\Phi$ of $r$. Note that since $A_S$ acts on $\Phi$ without inversions, $T$ is a subcomplex of~$\Phi$. Since the stabilisers of the simplices of $\Phi$ outside $\Phi_\mathcal{T}$ are trivial, we have that $T\subset \Phi_\mathcal{T}$. In particular $T$ is a graph. Since $\Phi$ with the Moussong metric is $\mathrm{CAT}(0)$, $T$ is convex and thus it is a tree.

\begin{defin}\label{def:standard_tree}
A \textit{standard tree} is the fixed-point set in $\Phi$ of a conjugate of a generator $r\in S$ of $A_S$.
\end{defin}

The first goal of this section is to describe standard trees and their stabilisers, in the spirit of Example~\ref{exa:tree} ahead. In particular, see Figure~\ref{fig:standard}.

Recall from Section~\ref{sec:nonhyp} that, in the Moussong metric, the angles of triangles at a vertex of type $\{s,t\}$ are $\frac{\pi}{2m_{st}}$. From the convexity of $T$ we thus have immediately:

\begin{cor}
\label{cor:tree_local}
Let $T$ be a standard tree, and let $v$ be a vertex of $T$ incident to edges $e,e'$ of $T$. Then the combinatorial distance between their corresponding vertices in the link of $v$ is at least $2m_{st}$ for $v$ of type $\{s,t\}$, or exactly $2$ for $v$ of type~$\{s\}$. Consequently, in the case where $A_S$ is of hyperbolic type, their distance in the angular metric induced from the piecewise hyperbolic metric is at least $\pi+2m_{st}\eps$ for $v$ of type $\{s,t\}$ or exactly $\pi+2\eps$ for $v$ of type $\{s\}$.
\end{cor}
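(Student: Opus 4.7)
The plan is to deduce both statements from the convexity of $T$ in $\Phi$ equipped with the Moussong piecewise Euclidean $\mathrm{CAT}(0)$ metric, which was noted in the paragraph preceding Definition~\ref{def:standard_tree} ($T$ being the fixed set of an isometry of a $\mathrm{CAT}(0)$ space). For any two edges $e, e'$ of $T$ meeting at $v$, convexity forces the concatenation at $v$ of interior segments of $e$ and of $e'$ to be the $\mathrm{CAT}(0)$ geodesic between its endpoints, so the Alexandrov angle satisfies $\angle_v(e, e') \geq \pi$. Since the link of any vertex of a piecewise Euclidean $\mathrm{CAT}(0)$ complex is $\mathrm{CAT}(1)$, this Alexandrov angle equals the angular distance in the link of $\tilde v$ between the vertices corresponding to $e$ and $e'$. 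For $v$ of type $\{s,t\}$, every triangle at $v$ has angle $\frac{\pi}{2m_{st}}$ there in the Moussong metric, so every edge of the link has angular length $\frac{\pi}{2m_{st}}$; the bound $\angle_v(e, e') \geq \pi$ therefore forces the combinatorial link-distance to be at least $2m_{st}$.

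For $v$ of type $\{s\}$ I will identify the link of $\tilde v$ explicitly with a complete bipartite graph. Its stabiliser $A_s$ acts freely on the lifts of the unique type-$\emptyset$ neighbour $p_0$ of $v$ in $K$ (the corresponding edge group is trivial), and fixes the unique lift $\tilde q_t$ of each type-$\{s,t\}$ neighbour $q_t$ (since the edge group $A_s$ injects as the identity into the vertex group $A_{st}$ of $q_t$). Hence every lift of the $K$-triangle $(v, p_0, q_t)$ at $\tilde v$ has the form $(\tilde v, g\tilde p_0, \tilde q_t)$ with $g \in A_s$ and $t$ ranging over the neighbours of $s$ in the defining graph, so in the link of $\tilde v$ every lift $g\tilde p_0$ is adjacent to every $\tilde q_t$, making the link a complete bipartite graph between a type-$\emptyset$ side of $|A_s|$ vertices and a type-$\{s,t\}$ side. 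Since $T \subseteq \Phi_\mathcal{T}$, the link vertices corresponding to $e$ and $e'$ both lie on the type-$\{s,t\}$ side, and thus share any type-$\emptyset$ vertex as a common neighbour, giving combinatorial distance exactly~$2$.

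Finally, the angular statement in the hyperbolic case follows by multiplying the already established combinatorial distance bounds by the corresponding piecewise hyperbolic link-edge lengths $\frac{\pi}{2m_{st}}+\eps$ and $\frac{\pi}{2}+\eps$ supplied by Lemma~\ref{lem:r}; the equality in the type-$\{s\}$ case is preserved because all link edges at $\tilde v$ share the same angular length, so the combinatorial geodesic of length $2$ realises the angular distance. The only mildly delicate point is the explicit identification of the link at a type-$\{s\}$ vertex with a complete bipartite graph, which requires unpacking the local development of the complex of groups~$\mathcal K$; the rest is an immediate application of the standard $\mathrm{CAT}(0)$/$\mathrm{CAT}(1)$ dichotomy.
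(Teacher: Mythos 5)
Your proof is correct and follows essentially the same route as the paper, which deduces the corollary immediately from the convexity of $T$ in the $\mathrm{CAT}(0)$ Moussong metric together with the triangle angles $\tfrac{\pi}{2m_{st}}$ and $\tfrac{\pi}{2}$ (respectively $\tfrac{\pi}{2m_{st}}+\eps$ and $\tfrac{\pi}{2}+\eps$ in the hyperbolic metric of Lemma~\ref{lem:r}). The only detail you add to the paper's one-line argument is the explicit identification of the link of a type-$\{s\}$ vertex as a complete bipartite graph, which is what upgrades ``at least $2$'' to ``exactly $2$'' and is consistent with the description of that link in the proof of Lemma~\ref{lem:nonpositive_curvature}.
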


The following lemma will allow us to describe the structure of the stabiliser of a standard tree. For a vertex $v$ of type $\{s,t\}$, with $v=gv_K$ for $g\in A_S$ and $v_K$ the unique vertex of type $\{s,t\}$ in $K$, we define $z_v= gz_{st}g^{-1}$. Note that $z_v$ does not depend on $g$, since if $g'v_K=gv_K$, then $g^{-1}g'\in A_{st}$ and hence $g^{-1}g'$ commutes with $z_{st}$ implying $gz_{st}g^{-1}=g(g^{-1}g')z_{st}(g^{-1}g')^{-1}g^{-1}=g'z_{st}(g')^{-1}$.

\begin{lem}
\label{lem:tree_local}
Let $e,e'$ be edges in $\Phi$ with a common vertex $v$ of type $\{s,t\}$. Then either $\St(e)\cap\St(e')=\{\Id\}$ or $\St(e)=\St(e')$. Moreover in the latter case,
\begin{itemize}
\item
if $e,e'$ are of the same type, then there is $g\in \langle z_v\rangle$ with $e'=ge$, and
\item
if $e$ corresponds to the coset $A_s$ and $e'$ to the coset $h'A_t$, then $m_{st}$ is odd and there is $h\in \Delta_{st}\langle z_{st}\rangle$ satisfying $hA_t=h'A_t$.
\end{itemize}
\end{lem}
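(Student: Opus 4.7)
The strategy is to translate everything into a statement about cosets in the dihedral Artin group $A_{st}$. Replacing $v$ by a point in its orbit, we may assume $v=v_K$, so $\St(v)=A_{st}$ and $z_v=z_{st}$. Edges at $v$ going to type-$\emptyset$ vertices have trivial stabiliser (and the first alternative holds trivially for them), so we may assume $e,e'$ join $v$ to vertices of type $\{s\}$ or $\{t\}$. Such edges are in bijection with cosets $hA_s$, resp.\ $hA_t$, in $A_{st}$, with stabilisers $hA_sh^{-1}$, resp.\ $hA_th^{-1}$. The claim then splits into a same-type case and a mixed-type case.

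\emph{Same-type case.} Write $e=hA_s\cdot e_K^s$ and $e'=h'A_s\cdot e_K^s$. A non-trivial common stabiliser element gives $hs^ah^{-1}=h's^b(h')^{-1}$ with $a,b\neq 0$. Abelianising $A_{st}$ (to $\Z^2$ if $m$ is even and to $\Z$ if $m$ is odd) forces $a=b$, so $k:=(h')^{-1}h$ centralises $s^a$. Lemma~\ref{lem:centralizer} then yields $k=s^iz_{st}^l$ for some $i,l$ (for $m=2$, not covered by the lemma, $A_{st}$ is abelian and the same conclusion is immediate since $\langle s, z_{st}\rangle=A_{st}$). Setting $g:=z_{st}^{-l}\in\langle z_v\rangle$ and using centrality of $z_{st}$, one checks $ghA_s=h'A_s$, hence $ge=e'$; and $hA_sh^{-1}=h'A_s(h')^{-1}$, so $\St(e)=\St(e')$.

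\emph{Mixed-type case.} We may take $e=A_s\cdot e_K^s$ and $e'=h'A_t\cdot e_K^t$; a common non-trivial stabiliser element gives $s^a=h't^b(h')^{-1}$ with $a,b\neq 0$. If $m$ is even, abelianising $A_{st}$ to $\Z^2$ (with $s,t$ going to independent generators) forces $a=b=0$, a contradiction; hence $m$ is odd. For $m$ odd, the identity $\Delta_{st}s\Delta_{st}^{-1}=t$ (which holds because $\Delta_{st}$ descends to the longest element of the dihedral Coxeter quotient, which swaps $s$ and $t$) gives $\Delta_{st}t^a\Delta_{st}^{-1}=s^a$, and comparing with $s^a=h't^a(h')^{-1}$ shows that $\Delta_{st}^{-1}h'$ centralises $t^a$. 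The $t$-analogue of Lemma~\ref{lem:centralizer} then gives $\Delta_{st}^{-1}h'=t^iz_{st}^j$. Setting $h:=\Delta_{st}z_{st}^j\in\Delta_{st}\langle z_{st}\rangle$ and using centrality of $z_{st}$, we obtain $hA_t=h'A_t$, and the computation $h'A_t(h')^{-1}=\Delta_{st}A_t\Delta_{st}^{-1}=A_s$ yields $\St(e)=\St(e')$.

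The main obstacle is the mixed-type case: both the parity conclusion (that $m$ must be odd) and the description of the coset representative $h'$ modulo $A_t$ on the right and $\langle z_{st}\rangle$ on the left rest on the outer symmetry $s\leftrightarrow t$ realised by $\Delta_{st}$ when $m$ is odd. This symmetry has no analogue when $m$ is even, which is precisely why the two alternatives of the lemma look different and why the mixed-type conclusion only arises for $m$ odd.
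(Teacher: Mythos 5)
Your proof is correct and follows essentially the same route as the paper's: reduce to cosets of $A_s$ and $A_t$ in $A_{st}$, use a homomorphism to the abelianisation to match exponents and to rule out $m$ even in the mixed case, and apply Lemma~\ref{lem:centralizer} together with the relation $s\Delta_{st}=\Delta_{st}t$ for $m$ odd. (The only slip is that in the mixed case you silently replace $t^b$ by $t^a$; the equality $a=b$ follows from the abelianisation to $\Z$ when $m$ is odd, exactly as in your same-type case.)
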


\begin{proof}
Assume without loss of generality that $e$ corresponds to the trivial coset~$A_s$. Then $\St(v)=A_{st}$. Assume first that $e'$ corresponds to a coset $h'A_s$.
Note that whenever we will establish $e'=ge$ for some $g\in \langle z_{st}\rangle$, we will have $\St(e)=\St(e')$. If $m=2$, then $\langle z_{st}\rangle A_s=A_{st}$. Thus there is $g\in \langle z_{st}\rangle$ with $gA_s=h'A_s$, and so $e'=ge$, as desired. Suppose now $m\geq 3$. If $\St(e)\cap\St(e')\neq\{\Id\}$, then we have $h's^k(h')^{-1}e=e$ for some $k>0$. This means that $h's^k(h')^{-1}\in A_s$, and using the homomorphism $A_{st}\to \Z$ mapping both generators to $1$ we obtain $h's^k(h')^{-1}=s^k$. By Lemma~\ref{lem:centralizer}, there is $g\in \langle z_{st}\rangle$ with $h'A_s=gA_s$, as desired.

If $e'$ corresponds to $h'A_t$ and $\St(e)\cap\St(e')\neq\{\Id\}$, then we have $h't^k(h')^{-1}e=e$ for some $k>0$. This means that $h't^k(h')^{-1}\in A_s$, and using the same homomorphism $A_{st}\to \Z$ we obtain $h't^k(h')^{-1}=s^k$. If $m_{st}$ is even, then $s$ and $t$ are not conjugate (use a homomorphism to $\Z$ killing $s$ but not $t$), contradiction. When $m_{st}$ is odd, we have $s\Delta_{st}=\Delta_{st}t$ and consequently $s^k=\Delta_{st}t^k\Delta_{st}^{-1}$, so that $(h')^{-1}\Delta_{st}$ commutes with $t^k$. By Lemma~\ref{lem:centralizer} there is $h$ with $hA_t=h'A_t$ and $h^{-1} \Delta_{st}\in\langle z_{st}\rangle$. Thus $\St(e')=hA_th^{-1}=A_s=\St(e)$.
\end{proof}

\begin{rem}
\label{rem:trees unique}
By Lemma~\ref{lem:tree_local}, the stabilisers of all edges in a standard tree coincide. Consequently each edge of $\Phi_\mathcal{T}$ belongs to exactly one standard tree and each vertex of type $\{s\}$ belongs to exactly one standard tree.
(In contrast, each vertex of type $\{s,t\}$ belongs to infinitely many standard trees, for $m_{st}>2$, or to two standard trees, for $m_{st}=2$.)
\end{rem}

\begin{lem}
\label{lem:stab_tree}
The stabiliser of the standard tree that is the fixed-point set of $r\in S$ is of the form $A_r\times F$ for some free group $F$.
\end{lem}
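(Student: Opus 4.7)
The plan is to analyse the action of $\mathrm{Stab}(T)$ on the tree $T$ and identify $\mathrm{Stab}(T)$ as a central extension $1\to A_r\to \mathrm{Stab}(T)\to F\to 1$ of a free group $F$ by $A_r$; such an extension splits because $H^2(F;\Z)=0$, which together with centrality of $A_r$ gives the required direct product.

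First I will show that $A_r$ is central in $\mathrm{Stab}(T)$. Given $g\in \mathrm{Stab}(T)$, the element $grg^{-1}$ fixes $gT=T$ pointwise, so it lies in the pointwise stabiliser of $T$, which is contained in the stabiliser of any edge of $T$ --- a cyclic subgroup conjugate to some $A_{s'}$ and containing $r$. Using the homomorphism $\pi\colon A_S\to \Z$ that sends each generator to $1$, the element $r$ (with $\pi(r)=1$) cannot be a proper power in $A_S$, so every cyclic subgroup of $A_S$ containing $r$ equals $A_r$. Hence $grg^{-1}\in A_r$, and then $\pi$ forces $grg^{-1}=r$. The same primitivity observation identifies every edge stabiliser of the $\mathrm{Stab}(T)$-action on $T$, and every vertex stabiliser at a vertex of type $\{s\}$, with $A_r$.

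For a vertex $v$ of type $\{s,t\}$ in $T$, the centrality argument just given shows that $\mathrm{Stab}_{\mathrm{Stab}(T)}(v)$ equals the centraliser of $r$ inside $\mathrm{Stab}(v)\cong A_{st}$. Conjugating so that $r$ plays the role of $s^{\pm 1}$ (using again that $r$ is primitive in $A_S$), Lemma~\ref{lem:centralizer} yields $\mathrm{Stab}_{\mathrm{Stab}(T)}(v)=\langle A_r,z_v\rangle\cong\Z^2$ for $m_{st}\geq 3$; the case $m_{st}=2$ gives the same conclusion directly, since $A_{st}\cong\Z^2$ is already abelian. Applying $\pi$, the intersection $A_r\cap\langle z_v\rangle$ is trivial, so $\mathrm{Stab}_{\mathrm{Stab}(T)}(v)/A_r\cong\Z$.

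Finally, I will apply Bass--Serre theory. The induced action of $\mathrm{Stab}(T)/A_r$ on $T$ has trivial edge stabilisers and vertex stabilisers that are either trivial or infinite cyclic, so by a standard graph of groups computation, $\mathrm{Stab}(T)/A_r$ is a free product of copies of $\Z$ together with a free group coming from the first homology of the underlying quotient graph; such a group is itself free. Calling it $F$, the central extension $1\to A_r\to \mathrm{Stab}(T)\to F\to 1$ splits because $H^2(F;\Z)=0$, and combined with the centrality of $A_r$ this gives $\mathrm{Stab}(T)\cong A_r\times F$. The main obstacle I expect is the primitivity bookkeeping: verifying that every cyclic stabiliser containing $r$ equals $A_r$, and in particular correctly identifying $\mathrm{Stab}_{\mathrm{Stab}(T)}(v)$ at a type-$\{s,t\}$ vertex with $\langle A_r, z_v\rangle$ via Lemma~\ref{lem:centralizer} rather than with some larger subgroup of $A_{st}$.
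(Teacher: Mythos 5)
Your proposal is correct and follows essentially the same route as the paper: establish that $A_r$ is central in $\St(T)$ via the conjugation action on edge stabilisers and the homomorphism $A_S\to\Z$, identify the vertex stabilisers of the induced action of $\St(T)/A_r$ on $T$ as trivial or $\Z$ (the paper cites Lemma~\ref{lem:tree_local} where you invoke Lemma~\ref{lem:centralizer} directly, but these amount to the same ingredient), conclude by Bass--Serre theory that the quotient is free, and split the resulting central extension. The only cosmetic difference is that you use primitivity of $r$ to pin down the edge stabilisers where the paper appeals to Remark~\ref{rem:trees unique}; both are valid.
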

\begin{proof}
Let $T$ be the standard tree that is the fixed-point set of $r\in S$.
Since $r$ fixes an edge of $T$, any element $g\in \St(T)$ conjugates $r$ to an element also fixing an edge of $T$, which must be
some $r^k$ by Remark~\ref{rem:trees unique}. In fact, we obtain $k=1$ using the homomorphism $A_S\to \Z$ mapping all the generators to $1$. Hence $A_r=\Z$ is in the centre of $\St(T)$. The quotient $F=\St(T)/A_r$ acts on $T$ with trivial edge stabilisers. Thus $F$ is the fundamental group of the quotient graph of groups $T/F$, whose edge groups are trivial and whose vertex groups are $\Z$ by Lemma~\ref{lem:tree_local}. Consequently $F$ is free. Taking any splitting $F\to \St(T)$ gives us $\St(T)=A_r\times F$.
\end{proof}

\begin{rem}
\label{rem:description}
We have the following explicit description of the stabiliser of the standard tree $T$ in Lemma~\ref{lem:stab_tree}. Let $\overline{T}$ be the graph obtained from $K_\mathcal{T}=K\cap \Phi_\mathcal{T}$ by cutting it along all the vertices of type $\{s,t\}$ with $m_{st}$ even. Vertices of $\overline{T}$ inherit types from the vertices of $K_\mathcal{T}$.
Let $\overline{T}_r$ be the component of $\overline{T}$ containing the unique vertex of type $\{r\}$.
By Lemma~\ref{lem:tree_local}, we have $T/F=\overline {T}_r$, with vertex groups~$\Z$ at all the vertices of type $\{s,t\}$.
Introduce an order $s_1,s_2,\ldots$ on the elements of~$S$. Label each directed edge $e$ of $\overline T_r$ connecting the vertex of type $\{s_i\}$ to the vertex of type $\{s_i,s_j\}$ with $i<j$ and $m_{s_is_j}$ odd with the element $\phi(e)=\Delta_{s_is_j}\in A_S$. Label all other edges of $\overline T_r$ by the trivial element. 
From Lemma~\ref{lem:tree_local} one can deduce that we can take $F$ freely generated by
\begin{itemize}
\item
the words labelling a set of closed paths in $\overline T_r$ based at the vertex of type~$\{r\}$ forming a free basis of $\pi_1\overline{T}_r$, and
\item
the conjugates of $z_{st}$ by the words labelling some paths in $\overline T_r$ joining the vertex of type $\{r\}$ with each of the vertices of type $\{s,t\}$ in $\overline{T}_r$ .
\end{itemize}
\end{rem}

\begin{ex}
\label{exa:tree}
We illustrate Remark~\ref{rem:description} on an example, see Figure~\ref{fig:standard} below. Let $S=\{s,t,r\}$ with $m_{st}=4, m_{tr}=2, m_{sr}=\infty$. Let $e$ be the unique edge of $K$ joining the vertices of types $\{s\}$ and $\{s,t\}$. Then the standard tree $T_s$ that is the fixed-point set of $s$ is the union of the translates of $e$ under $\langle z_{st} \rangle$, which is bounded. The stabiliser of $T_s$ is $\langle s, z_{st} \rangle \cong \Z^2$.

Let $f$ (respectively, $f'$) be the unique edge of $K$ joining the vertices of types $\{t\}$ and $\{s,t\}$ (respectively, $\{t\}$ and $\{t,r\}$). Then the standard tree $T_t$ that is the fixed-point set of $t$ is not bounded. Each of the vertices of $T_t$ of type $\{t\}$ has degree~$2$, and each of the vertices of $T_t$ of type $\{s,t\}$ or $\{t,r\}$ has infinite degree. The stabiliser of $T_t$ is the direct product of $A_t=\Z$ and the free group generated by $z_{st}$ and $z_{tr}$.

\begin{figure}[H]
	\begin{center}
		\scalebox{1}{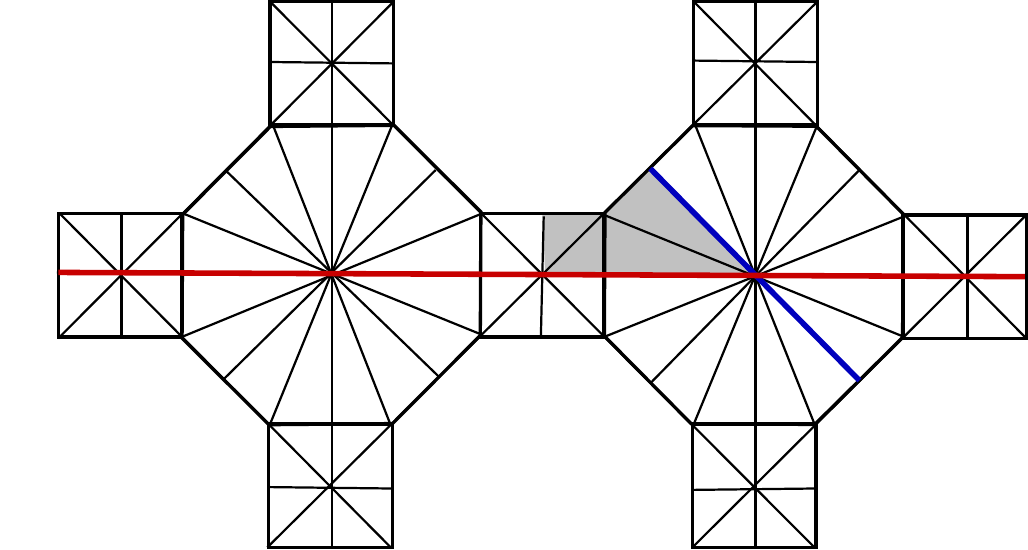}
		\caption{A (very small) portion of the Deligne complex for $A_S = \langle s, t, r ~|~ stst=tsts, rt=tr \rangle$, with the fundamental domain $K$ shaded. The bounded standard tree $T_s$ is represented in blue, while the unbounded standard tree $T_t$ is represented in red.}
		\label{fig:standard}
	\end{center}
\end{figure}

\end{ex}

\begin{defin}\label{def:cone_off}
Suppose now that $A_S$ is of hyperbolic type, and equip $\Phi$ with the $\mathrm{CAT}(-1)$ metric of Section~\ref{subs:Deligne}.
Let $\Phi^*$ be the $2$-complex obtained by coning off simplicially each of the standard trees. In $\Phi$ consider an edge of a standard tree with vertices $v',v''$ of type $\{s\},\{s,t\}$ respectively, and let $c$ be its cone vertex. We put on the triangle $cv'v''$ the metric of a right-angled hyperbolic triangle with the right angle at $v'$, $|cv'|=1$ and the length $d=|v'v''|$ (depending on $m_{st}$) as in $\Phi$ so that by Remark~\ref{rem:cone} the angle at $v''$ is $\geq \frac{\pi}{2}-\eps$. 

Since $\Phi$ has finitely many isometry types of simplices, $\Phi^*$ also has finitely many isometry types of simplices. In particular $\Phi^*$ is complete by \cite[Thm~I.7.19]{BH}.

 The action of $A_S$ on $\Phi$ extends to an action of $A_S$  on $\Phi^*$, where each $g\in A_S$ maps the cone vertex of a standard tree $T$ to the cone vertex of the standard tree $gT$. Since the metric on each triangle $cv'v''$ above depends only on the length of the edge $v'v''$ in $\Phi$, we have that $A_S$ acts on $\Phi^*$ by isometries.
\end{defin}

\begin{prop}
\label{prop:CAT(-1)}
For $\eps$ sufficiently small, $\Phi^*$ is $\mathrm{CAT}(-1)$.
\end{prop}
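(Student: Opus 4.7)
The strategy is to apply the link condition for piecewise hyperbolic complexes \cite[Thm~II.5.24]{BH}: since $\Phi^*$ is two-dimensional, piecewise hyperbolic, and complete, it is $\mathrm{CAT}(-1)$ if and only if the link of every vertex is $\mathrm{CAT}(1)$, which for a metric graph means every injective loop has length $\geq 2\pi$. The vertices of $\Phi^*$ come in four flavours: the three vertex types $\emptyset$, $\{s\}$, $\{s,t\}$ inherited from $\Phi$, and the new cone vertices $c_T$, one per standard tree $T$. Two of these cases are immediate: the coning adds no triangles incident to vertices of type $\emptyset$, so their links are unchanged and Lemma~\ref{lem:nonpositive_curvature} applies; and at a cone vertex $c_T$, the link is combinatorially isomorphic to the tree $T$ (each triangle $cv'v''$ contributes a link-edge corresponding to the edge $v'v''$ of $T$), so since trees have no loops the $\mathrm{CAT}(1)$ condition is vacuous.

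For $v'$ of type $\{s\}$, by Remark~\ref{rem:trees unique} exactly one standard tree $T$ contains $v'$, with two incident tree-edges at $v'$. Coning off $T$ introduces a single new link-vertex $c_T$ joined by edges of length $\pi/2$ (the right angle at $v'$) to the two tree-edge link-vertices $u_1,u_2$, which were already at angular distance exactly $\pi+2\eps$ in the old link by Corollary~\ref{cor:tree_local}. Any new injective loop must traverse $c_T$, and hence has length $\geq \pi+(\pi+2\eps)=2\pi+2\eps$, while loops avoiding $c_T$ lie in the old link, of girth $\geq 2\pi+\eps$.

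The main case is $v''$ of type $\{s,t\}$, which lies in many standard trees. For each such tree $T$, coning off adds a link-vertex $c_T$ joined to every tree-edge link-vertex of $T$ by an edge of length $\geq \pi/2-\eps$ (Remark~\ref{rem:cone}). Let $\gamma$ be an injective loop in the modified link passing through $k$ new cone vertices. When $k=0$ we invoke Lemma~\ref{lem:nonpositive_curvature}. When $k=1$, the two tree-edge vertices used at the single cone lie in a common tree, hence at old-link distance $\geq \pi+2m_{st}\eps$ by Corollary~\ref{cor:tree_local}, giving $|\gamma|\geq 2(\pi/2-\eps)+\pi+2m_{st}\eps\geq 2\pi$. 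When $k=2$, by Remark~\ref{rem:trees unique} the two cones correspond to distinct trees, so the four tree-edge vertices are distinct; each of the two old-link arcs has length $\geq 2(\pi/(2m_{st})+\eps)$ (since the old link is bipartite between type-$\emptyset$ link-vertices and type-$\{s\}/\{t\}$ link-vertices, with edge length $\pi/(2m_{st})+\eps$, and the two endpoints of each arc lie on the same side), whence $|\gamma|\geq 2(\pi-2\eps)+2(\pi/m_{st}+2\eps)=2\pi+2\pi/m_{st}>2\pi$. When $k\geq 3$, the cone traversals alone contribute $k(\pi-2\eps)\geq 3\pi-6\eps\geq 2\pi$, provided $\eps\leq \pi/6$.

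The main obstacle is this bookkeeping at the type-$\{s,t\}$ vertices, where one must simultaneously control loops of every combinatorial pattern while keeping $\eps$ small enough for $(\spadesuit)$ and Lemma~\ref{lem:r} to hold; the critical input is that tree-edge vertices within a common tree are far apart in the old link (Corollary~\ref{cor:tree_local}), so that the ``short'' transit through a cone of length $\pi-2\eps$ is balanced by a sufficiently long return in the original link.
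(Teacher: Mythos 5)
Your proof is correct and follows essentially the same route as the paper: the same reduction to the link condition and the same case analysis by vertex type (cone vertices, type $\emptyset$, type $\{s\}$, and type $\{s,t\}$ split according to the number of cone vertices traversed), with matching length estimates throughout. The only (easily repaired) omission is that the link condition alone yields local negative curvature, so one must also observe that $\Phi^*$ is simply connected (standard trees are contractible, so coning them off preserves simple connectedness) before invoking the Cartan--Hadamard theorem to conclude global $\mathrm{CAT}(-1)$.
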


\begin{proof} Since $\Phi$ is simply connected and standard trees are simply connected, $\Phi^*$ is simply connected. $\Phi^*$ is piecewise hyperbolic, so by \cite[Thms~II.4.1(2) and II.5.24]{BH} it suffices to show that the link of each vertex $v$ is of girth $\geq 2\pi+\eps$. If $v$ is a cone point, its link is a tree and there is nothing to prove. The vertex links $L(v)$ in $\Phi$ are of girth $\geq 2\pi+\eps$ by Lemma~\ref{lem:nonpositive_curvature}. Hence if $v$ is of type $\emptyset$, we are done as well.

If $v$ if of type $\{s\}$, and $\alpha$ is a cycle in its link not contained in $L(v)$, then $\alpha$ passes through a vertex corresponding to an edge joining $v$ to a cone point, which is unique by Remark~\ref{rem:trees unique}. Hence $\alpha$ travels through two adjacent edges of length $\frac{\pi}{2}$, and through at least two edges in $L(v)$, which by Corollary~\ref{cor:tree_local} have length $\geq \frac{\pi}{2}+\eps$, as desired.

If $v$ if of type $\{s,t\}$, and $\alpha$ is a cycle in its link not contained in $L(v)$, then analogously $\alpha$ passes through a vertex corresponding to an edge joining $v$ to a cone point. Hence $\alpha$ travels through two adjacent edges of length $\geq \frac{\pi}{2}-\eps$. If the remaining part of $\alpha$ is contained in $L(v)$, then it suffices to use Corollary~\ref{cor:tree_local}. Finally, by Remark~\ref{rem:trees unique}, if $\alpha$ passes through exactly one other vertex (respectively, at least two other vertices) corresponding to a cone point, it is of length $\geq (2\pi-4\eps)+2(\frac{\pi}{m_{st}}+2\eps)$ (respectively, $\geq 3\pi-6\eps$), which is $\geq 2\pi+\eps$ for $\eps$ small enough with respect to $m_{st}$.
\end{proof}

\begin{conv}
\label{con}
From now on, we assume that $\varepsilon$ in Section~\ref{sec:complex} was chosen small enough so that the coned off space $\Phi^*$ is $\mathrm{CAT}(-1)$.
\end{conv}

\section{Dynamics of the action}
\label{sec:dynamics}
The goal of this section is to prove the acylindricity of the action of $G$ on $\Phi^*$ for two-dimensional Artin groups of hyperbolic type (Theorem~\ref{thm:acylindrically}) and its Corollaries~\ref{thm:Tits Alternative}--\ref{thm:classification_prod}.
In order to do this, we use the more general Theorem~\ref{thm:main_acyl} whose proof we postpone to the next subsection.

\subsection{Weak acylindricity of the action of $A_S$ on $\Phi^*$}
\label{sec:theorems}

\begin{lem}
	\label{lem:distancerealised}
	Let $Y$ be a connected piecewise hyperbolic simplicial complex with finitely many isometry types of simplices. Then for any non-empty subcomplexes $T,T'\subseteq Y$ there are points $x\in T,x'\in T'$ realising the distance between $T$ and $T'$.
	
	Moreover, if $Y$ is $\mathrm{CAT}(-1)$ and $T,T'$ are convex and disjoint, then such a pair of points $x,x'$ is unique.
\end{lem}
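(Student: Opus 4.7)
I would prove existence by a pigeonhole argument reducing the minimising geodesic to a compact combinatorial model, and uniqueness by a flat-strip / strict convexity argument in $\mathrm{CAT}(-1)$.

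For existence, take minimising sequences $x_n \in T$, $x_n' \in T'$ with $d(x_n, x_n') \to d_0 := d(T, T')$. Since $Y$ has finitely many isometry types of simplices, $Y$ is complete by \cite[Thm~I.7.19]{BH}, so each pair is joined by a geodesic $\gamma_n$ of length at most $d_0+1$. Standard Bridson--Haefliger theory of geodesics in $M_\kappa$-simplicial complexes with finitely many shapes (compare \cite[\S I.7]{BH}) shows that $\gamma_n$ is a piecewise geodesic that crosses each simplex in a segment of length bounded below by a uniform constant $\varepsilon_0 > 0$ (depending only on the shape data), unless that segment lies entirely in a proper face. In particular $\gamma_n$ traverses at most $N = N(d_0)$ simplices, and since there are only finitely many shape types, the combinatorial type of $\gamma_n$ -- the ordered sequence of isometry classes of traversed simplices, together with their gluing data along codimension-one faces -- lies in a finite set. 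After passing to a subsequence, all $\gamma_n$ share one common combinatorial type $\tau$. This data assembles into a compact piecewise hyperbolic model $P$ together with isometric embeddings $\iota_n : P \hookrightarrow Y$ realising $\gamma_n$; the preimages $\tilde x_n, \tilde x_n'$ of $x_n, x_n'$ in $P$ lie in fixed compact simplices and so, by compactness of $P$, have a further subsequence converging to $\tilde x, \tilde x' \in P$ with $d_P(\tilde x, \tilde x') = d_0$.

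The principal obstacle I expect is descending convergence from $P$ back to $Y$: $T$ need not be locally finite, so the images $\iota_n(P)$ could involve distinct initial simplices $\sigma_{n,1} \subset T$ and the maps $\iota_n$ themselves need not converge. I would handle this by noting that once $\tau$ is fixed, each $\iota_n$ is determined by $\sigma_{n,1}$ together with a choice of simplicial identification of $P$'s initial simplex with $\sigma_{n,1}$ -- a finite discrete choice. A diagonal extraction across the successive simplices traversed by $\gamma_n$, each of whose isometry type comes from the finite shape list, then lets one select a subsequence on which the starting simplex $\sigma_{n,1}$ stabilises and hence $\iota_n$ stabilises to a fixed embedding $\iota$. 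Setting $x := \iota(\tilde x) \in T$ and $x' := \iota(\tilde x') \in T'$ yields the required distance-realising pair.

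For uniqueness, suppose $(x_1, x_1')$ and $(x_2, x_2')$ in $T \times T'$ both realise $d_0 > 0$. By convexity of $T$ and $T'$, the geodesic $\alpha : [0,1] \to Y$ from $x_1$ to $x_2$ lies in $T$ and the geodesic $\beta : [0,1] \to Y$ from $x_1'$ to $x_2'$ lies in $T'$, so $d(\alpha(s),\beta(s)) \ge d_0$ for all $s$. The $\mathrm{CAT}(0)$ convexity of the distance function between geodesics gives that $s \mapsto d(\alpha(s),\beta(s))$ is convex, and since its values at $s=0, 1$ are both $d_0$, the function is identically $d_0$. In a $\mathrm{CAT}(-1)$ space a constant-distance pair of non-trivial geodesic segments would span a flat rectangle, violating strict negative curvature; hence $\alpha$ and $\beta$ are both constant, i.e.\ $(x_1, x_1') = (x_2, x_2')$.
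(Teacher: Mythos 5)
Your existence strategy --- bound the number of simplices a near-minimising geodesic can traverse, pigeonhole on the finitely many isometry types of the resulting compact models, and extract a limit inside one fixed compact model --- is essentially the paper's argument (the paper gets the bound from taut strings, \cite[Thm~I.7.28]{BH}, and works with the galleries $Gal(\gamma)$ directly). Your uniqueness argument via convexity of $s \mapsto d(\alpha(s),\beta(s))$ and the non-existence of flat rectangles in a $\mathrm{CAT}(-1)$ space is correct; the paper simply invokes strict convexity of the distance function. However, your descent from $P$ back to $Y$ contains a genuine error. You correctly identify the obstacle ($T$ need not be locally finite), but the proposed fix --- that a diagonal extraction lets you pass to a subsequence on which the starting simplex $\sigma_{n,1}$ stabilises --- does not work: finitely many \emph{isometry types} of simplices does not mean finitely many \emph{simplices}, so the $\sigma_{n,1}$ may be pairwise distinct and no subsequence of them is eventually constant.

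Fortunately no stabilisation is needed, and the repair is one line. The maps $\iota_n\colon P \to Y$ are $1$-Lipschitz (they are isometries on each closed simplex, hence do not increase path lengths; they need not be isometric embeddings, contrary to what you assert, but that is harmless). Since $d_P(\tilde x_n,\tilde x_n')\leq \mathrm{length}(\gamma_n)=d_Y(x_n,x_n')\to d_0$, continuity gives $d_P(\tilde x,\tilde x')\leq d_0$. Now fix any single $n$ of the chosen combinatorial type: then $d_Y(\iota_n(\tilde x),\iota_n(\tilde x'))\leq d_P(\tilde x,\tilde x')\leq d_0$, while $\iota_n(\tilde x)$ lies in the closed simplex $\sigma_{n,1}\subseteq T$ and $\iota_n(\tilde x')\in T'$, so $d_Y(\iota_n(\tilde x),\iota_n(\tilde x'))\geq d(T,T')=d_0$. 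Equality follows, and this pair realises the distance. (This sandwich $d(T,T')\leq d_Y(\sigma,\sigma')\leq d_{Gal(\gamma)}(\sigma,\sigma')$ together with the reverse bound is exactly how the paper concludes.) A smaller inaccuracy: a geodesic can cross a simplex in an arbitrarily short segment (e.g.\ when passing near a vertex), so your claimed uniform lower bound $\varepsilon_0$ on the lengths of the pieces is false as stated; what \cite[Thm~I.7.28]{BH} actually provides is only that the number of simplices met is bounded in terms of the length --- which is all you use.
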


\begin{proof}  Let $d_0 = d(T,T')$. By \cite[Thm~I.7.28]{BH}, there is a constant $c$ such that each 
taut string in $Y$ \cite[Def~I.7.20]{BH} of length $\leq d_0+1$ has size $\leq c$. In particular, since each geodesic segment $\gamma$ in $Y$ of length  $\leq d_0+1$ determines a taut string of length  $\leq d_0+1$, we have that the minimal subcomplex $Gal(\gamma)$ of $Y$ containing $\gamma$ is the union of at most $c$ simplices. Since $Y$ has finitely many isometry types of simplices, there are only finitely many isometry types of such~$Gal(\gamma)$. Below, $d_{Gal(\gamma)}$ denotes the intrinsic distance function on each $Gal(\gamma)$.

To justify that $d(T,T')$ is realised, we will prove that $d(T,T')=\min d_{Gal(\gamma)}(\sigma,\sigma')$ over all simplices $\sigma\subseteq T, \sigma'\subseteq T'$ and geodesics $\gamma$ of length $\leq d_0+1$ between some points in $\sigma$ and $\sigma'$. Indeed, we have $d(T,T')\leq d(\sigma,\sigma')\leq d_{Gal(\gamma)}(\sigma,\sigma')$. On the other hand, $d(T,T')$ is the infimum of the lengths of $\gamma$, which are $\geq d_{Gal(\gamma)}(\sigma,\sigma')$.
	
 The second assertion follows from the strict convexity of the distance function in $\mathrm{CAT}(-1)$ spaces (see for example \cite[Prop~II.2.2]{BH}, where for $X$ a $\mathrm{CAT}(-1)$ space and $c(0)\neq c'(0)$ one obtains the strict inequality). \end{proof}

\begin{prop}\label{cor:w_acyl}
	The action of $A_S$ on $\Phi^*$ is weakly acylindrical.
\end{prop}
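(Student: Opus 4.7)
The plan is to show that for $L$ sufficiently large, the pointwise stabiliser $H=\St(x)\cap\St(x')$ of any two points at distance $\geq L$ in $\Phi^*$ is trivial. Since $\Phi^*$ is $\mathrm{CAT}(-1)$ by Proposition~\ref{prop:CAT(-1)}, $H$ fixes the unique geodesic $\gamma=[x,x']$ pointwise, so $H$ lies in the pointwise stabiliser of every simplex whose interior $\gamma$ meets. First I would enumerate the simplices of $\Phi^*$ with non-trivial pointwise stabiliser, namely the vertices of type $\{s\}$, $\{s,t\}$ and the cone vertices, the edges of $\Phi_\mathcal{T}$, the cone edges, and the cone triangles. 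Consequently either $\gamma$ enters the interior of a trivial-stabiliser simplex and $H=\{\Id\}$ at once, or $\gamma$ lies entirely in the union $\bigcup_T F_T$ of the \emph{fans} $F_T:=T\cup(\text{cone triangles at }c_T)$, each of which has diameter bounded by a universal constant $D$ in $\Phi^*$. Choosing $L>D$ forces $\gamma$ to transition between two distinct fans $F_{T_1}\neq F_{T_2}$ at some point $v$, which by Remark~\ref{rem:trees unique} must be a vertex of type $\{s,t\}$.

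The next step is a case analysis of the two germs of $\gamma$ at $v$. Each germ lies in a $\Phi_\mathcal{T}$-edge of $T_i$, a cone triangle of $T_i$, or the cone edge $vc_{T_i}$; the pointwise stabilisers of these simplices are conjugate to $A_{r_{T_i}}\cong\Z$ in the first two cases, and to $\langle r_{T_i},z_v\rangle$ in the third, the latter by the centraliser computation of Lemma~\ref{lem:centralizer} applied inside $\St(v)\cong A_{st}$. When neither germ is a cone edge, $H\subseteq\langle r_{T_1}\rangle\cap\langle r_{T_2}\rangle=\{\Id\}$, because any non-trivial common element $g$ would satisfy $\mathrm{Fix}(g)\supseteq T_1\cup T_2$, contradicting $T_i=\mathrm{Fix}(r_{T_i})$ and $T_1\neq T_2$. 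When exactly one germ uses a cone edge, an abelianisation computation in $A_{st}$ exploiting that $z_v$ is not a power of any generator similarly gives $H=\{\Id\}$.

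The main obstacle is the remaining cone-edge-to-cone-edge case, in which we only conclude $H\subseteq\langle r_{T_1},z_v\rangle\cap\langle r_{T_2},z_v\rangle$; this is $\langle z_v\rangle$ when $m_{st}\geq 3$ and all of $A_{st}$ when $m_{st}=2$. To eliminate this residual I would establish the sub-claim that for every non-trivial $h\in A_{st}$, the fixed set $\mathrm{Fix}(h)\subseteq\Phi^*$ has diameter bounded by a universal constant $D'$. The key points are that $h$ fixes an additional cone vertex $c_T$ only when it normalises $A_{r_T}$, which by Lemma~\ref{lem:centralizer} forces $h\in\langle r_T,z_v\rangle$, and that $h$ can fix another type $\{s,t\}$ vertex only by lying in the intersection of two dihedral parabolic subgroups, which via the developable complex of groups structure reduces to a cyclic subgroup on their common generator; together these confine $\mathrm{Fix}(h)$ to a bounded neighbourhood of $v$ inside at most one fan. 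Taking $L>\max(D,D')$ then forces $H=\{\Id\}$ in every case, giving weak acylindricity with $N=1$.
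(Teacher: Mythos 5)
Your overall framework (pass to the $\Phi^*$-geodesic fixed pointwise by $H$, confine it to the union of ``fans'' over standard trees, and analyse the transition at a type-$\{s,t\}$ vertex) is genuinely different from the paper's. The paper never analyses geodesics of $\Phi^*$ at all: it replaces each of the two fixed points by the subcomplex of $\Phi$ it determines (a vertex of $\Phi$, or the standard tree under a cone vertex), takes the unique distance-realising geodesic $\gamma$ between these subcomplexes in $\Phi$ (Lemma~\ref{lem:distancerealised}), observes that a non-trivial common stabiliser must fix $\gamma$ pointwise, and concludes by a clean trichotomy: either $\gamma$ leaves $\Phi_\mathcal{T}$ (a point with trivial stabiliser), or two consecutive edges of $\gamma$ lie in distinct standard trees (trivial intersection by Lemma~\ref{lem:tree_local}), or $\gamma$ lies in a single standard tree, in which case the original points are at uniformly bounded distance in $\Phi^*$. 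This sidesteps the cone simplices entirely, which is exactly where your version runs into trouble.

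There are two genuine gaps in your argument. First, your claim that in the ``exactly one germ is a cone edge'' case an abelianisation computation yields $H=\{\Id\}$ is false for $m_{st}=2$: there the pointwise stabiliser of the cone edge $vc_{T_2}$ is $\St(T_2)\cap\St(v)$, which contains $\langle r_{T_1},r_{T_2}\rangle=A_{st}\cong\Z^2$, so $H$ may be all of $\langle r_{T_1}\rangle$ (indeed $r_{T_1}$ fixes both germs). Even for $m_{st}\geq 3$, abelianisation alone does not exclude $r_{T_1}^k=r_{T_2}^az_v^b$ with $b\neq 0$; one needs Lemma~\ref{lem:centralizer} and a further argument. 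Second, and more seriously, the sub-claim that $\mathrm{Fix}_{\Phi^*}(h)$ is uniformly bounded for every non-trivial $h$ in a dihedral parabolic is where all the difficulty of the proposition is concentrated, and your sketch is not a proof: the assertion that the intersection of two distinct dihedral parabolic subgroups reduces to a cyclic group on a common generator is not a citable statement in the paper, and establishing it requires precisely the geodesic analysis above (convexity of fixed sets in $\Phi$, Lemma~\ref{lem:distancerealised}, and the trichotomy via Lemma~\ref{lem:tree_local}); you also do not treat cone vertices $c_T$ with $v\notin T$, nor explain why the local constraints at $v$ prevent the convex set $\mathrm{Fix}_{\Phi^*}(h)$ from reaching far-away simplices. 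Until that sub-claim is actually proved, the cone-edge-to-cone-edge case (and, by the first point, the mixed case) remains open; once it is proved by the geodesic trichotomy, you will find you have essentially reproduced the paper's argument.
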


\begin{proof}
	Suppose $g\in A_S$ is a non-trivial element fixing points $y,y'\in \Phi^*$. To prove weak acylindricity it suffices to bound the distance $d(y,y')$ from above. Let $v$ (resp.~$v'$) be a vertex of the simplex containing $y$ (resp.\ $y'$) in its interior. We will bound from above the combinatorial distance between $v$ and $v'$.
	
	Since $A_S$ acts without inversions, $g$ fixes $v,v'$. We now define the following subcomplexes $T,T'$ of~$\Phi$. If $v$ is a vertex of $\Phi$, set $T=v$. If $v$ is a cone vertex, set $T$ to be the standard tree corresponding to $v$. Define $T'$ analogously.
We can assume that $T,T'$ are disjoint, since otherwise $v,v'$ are at combinatorial distance $\leq 2$ in $\Phi^*$, as desired.
 By Lemma~\ref{lem:distancerealised} applied to $Y=\Phi$, there are unique points $x\in T$ and $x'\in T'$ realising the distance between $T$ and $T'$. Let $\gamma$ be the geodesic of $\Phi$ between $x$ and~$x'$. Since $g$ stabilises $T$ and $T'$, we have that $g$ fixes~$\gamma$.
	
	If $\gamma$ is not contained in $\Phi_{\mathcal T}$, then it has a point with trivial stabiliser, contradicting the assumption that $g$ is non-trivial. Assume now that $\gamma$ is contained in~$\Phi_{\mathcal T}$. Suppose that $\gamma$ has two consecutive edges $e, e'$ that belong to distinct standard trees. By Lemma \ref{lem:tree_local}, we have that $\St(e)\cap\St(e')$ is trivial, contradicting again the assumption that $g$ is non-trivial. In the remaining case, the entire path $\gamma$ is contained in one standard tree. Consequently $v,v'$ are at combinatorial distance $\leq 4$ in $\Phi^*$, as desired.
\end{proof}

\begin{rem}
\label{rem:elliptic}
If $H\subset A_S$ acts elliptically on $\Phi^*$, then since $\Phi^*$ is $\mathrm{CAT}(-1)$ and complete, by \cite[Thm~II.2.8(1)]{BH} $H$ fixes a point $v$ of~$\Phi^*$. Since $A_S$, and hence~$H$, acts without inversions, we can take $v$ a vertex, which is a vertex of $\Phi$ or a cone vertex. Thus by Lemmas~\ref{lem:vertexgroup} and~\ref{lem:stab_tree} $H$
has a finite index subgroup contained in $\Z\times F$ for a free group $F$.
\end{rem}

\begin{proof}[Proof of Theorem~\ref{thm:acylindrically}] Since the action of $A_S$ on $\Phi^*$ is weakly acylindrical by Proposition~\ref{cor:w_acyl}, it follows from Theorem~\ref{thm:main_acyl} that the action is acylindrical. Assume now that for each $s\in S$ there is $t\in S$ with $m_{st}< \infty$ and let us show the universality of this action. By a theorem of Bridson \cite[Thm A]{B}, a simplicial isometry $g$ of a piecewise hyperbolic complex with finitely many isometry types of simplices is either loxodromic or elliptic. In particular, this applies to the action of each $g\in A_S$ on $\Phi^*$. Thus if $g$ is not loxodromic, by Remark~\ref{rem:elliptic} applied to $H=\langle g \rangle$, there is $k>0$ with $g^k\in \Z\times F\subset A_S$ for a free group $F$. The group $F$ has rank $\geq 1$ since in the case where $g$ stabilises a standard tree that is a translate of the fixed-point set of $s\in S$, we assumed that there is $t\in S$ with $m_{st}<\infty$. Thus $g^k$ generates an infinite cyclic subgroup with infinite index in its centraliser. It follows from \cite[Cor 6.9]{OsinAcyl} that $g$ cannot be generalised loxodromic.

Finally, assume that $|S|\geq 3$ and $A_S$ is irreducible. Since $|S|\neq 1$, we have that $A_S$ is not virtually cyclic. Since the action of $A_S$ on the hyperbolic space $\Phi^*$ is acylindrical, to show that $A_S$ is acylindrically hyperbolic it suffices to prove that this action is not elliptic. Indeed, otherwise by Remark~\ref{rem:elliptic} applied with $H=A_S$, we have that $A_S$ fixes a vertex of $\Phi$ or a cone vertex. If $A_S$ fixes a vertex of type $\{s\}$ or $\{s,t\}$, then we have $A_S=A_s$ or $A_S=A_{st}$, which contradicts $|S|\geq 3$. If $A_S$ fixes a cone vertex corresponding to a standard tree that is, say, the fixed point set of $s\in S$, then by Lemma~\ref{lem:stab_tree} the group $A_S$ centralises $s$. Thus for all $t\in S-\{s\}$ we have $m_{st}=2$, which contradicts the irreducibility of $A_S$.
\end{proof}

Note that the condition that for each $s\in S$ there is $t\in S$ with $m_{st}<\infty$ is necessary for the action to be universal. Indeed, otherwise $A_S=A_s*A_{S-\{s\}}$ and $\Phi^*$ is a tree of spaces with vertex spaces of two types. The first type are the coned off modified Deligne complexes for $A_{S-\{s\}}$. The second type are edges joining the fixed points of the conjugates of $s$ with their cone points. These vertex spaces are joined by edges with vertices of type $\{s\}$ and $\emptyset$. If we replace the vertex spaces of the second type with real lines containing a $\Z$'s worth of vertices of type $\{s\}$, the action stays acylindrical but $s$ becomes loxodromic. Note also that after performing these replacements for all free factors $A_s$ of $A_S$ we obtain a universal acylindrical action.

Note also that if $A_S$ is not irreducible, then it is not acylidrically hyperbolic \cite[Cor~7.3(b)]{OsinAcyl}. 
If $S=\{s,t\}$ and $m_{st}=\infty$, then $A_S$ is the free group on $s$ and~$t$, so it is acylindrically hyperbolic. Finally, if  
$S=\{s,t\}$ and $m_{st}<\infty$, then the group $A_S$ is virtually $\Z\times F$ for some free group $F$ (Lemma~\ref{lem:vertexgroup}). Thus $A_S$ is not acylidrically hyperbolic \cite[Cor~7.3(b)]{OsinAcyl}.

\begin{proof}[Proof of Corollary \ref{thm:Tits Alternative}]
Let $H$ be a subgroup of $A_S$ that is not virtually cyclic. Since the action of $A_S$ on the hyperbolic space $\Phi^*$ is acylindrical by Theorem~\ref{thm:acylindrically}, $H$ is elliptic or acylindrically hyperbolic. If $H$ is elliptic, then by Remark~\ref{rem:elliptic} $H$ is virtually contained in $\Z\times F$ for a free group $F$, and thus it is virtually $\Z^2$ or contains a non-abelian free group. If $H$ is acylindrically hyperbolic, then it contains a non-abelian free group by \cite[Thm~6.14]{DGO}.
\end{proof}

\begin{proof}[Proof of Corollary \ref{thm:classification_Z2}]
Let $H$ be a subgroup of $A_S$ that is virtually $\mathbb{Z}^2$. Since $H$ is not acylindrically hyperbolic by \cite[Cor~7.3(b)]{OsinAcyl}, it is elliptic by Theorem~\ref{thm:acylindrically}.
By Remark~\ref{rem:elliptic}, $H$ stabilises a vertex of $\Phi$ or a cone vertex of $\Phi^*$, and hence a standard tree of $\Phi$. In the latter case by Lemma~\ref{lem:stab_tree}, $H$ is contained in a $\Z\times F$ with $F$ a free group, and $\Z$ conjugate to some $A_s$, as desired.
\end{proof}

\begin{proof}[Proof of Corollary \ref{thm:classification_prod}]
Let $H$ be a subgroup of $A_S$ that is virtually a non-trivial direct product. Since $A_S$ is torsion-free \cite[Thm~B]{CD}, by \cite[Cor~7.3(b)]{OsinAcyl} $H$ is not acylindrically hyperbolic, and thus it is elliptic by Theorem~\ref{thm:acylindrically}. By Remark~\ref{rem:elliptic}, $H$ is virtually of form $\Z\times F$, as desired.\end{proof}

\subsection{The general acylindricity theorem}

We now turn to the proof of Theorem \ref{thm:main_acyl}. In this section, $Y$ denotes a two-dimensional piecewise hyperbolic simplicial complex, with finitely many isometry types of simplices, and that is $\mathrm{CAT}(-1)$.\\

\paragraph{Simplifications.} We first explain how to alter the metric $d$ on $Y$ so that girths of vertex links become uniformly greater than $2\pi$. Replace every hyperbolic triangle of~$Y$ with side lengths $a, b, c$ by a hyperbolic triangle of side lengths $\frac{a}{2}, \frac{b}{2}, \frac{c}{2}$ respectively, and call this new metric $d_2$. With respect to $d_2$ all the angles are strictly larger than with respect to $d$. Since $(Y,d_2)$ still has finitely many isometry types of simplices, the girths of vertex links are now uniformly greater than $2\pi$. Therefore, without loss of generality we assume from now on that the same was true with respect to $d$. Note that in the case where $Y$ is the coned off space $\Phi^*$, it follows from the proof of Proposition \ref{prop:CAT(-1)} that vertex links already satisfy this condition to start with.

Secondly, we explain how to subdivide the complex so that all the triangles become acute. Namely, any finite piecewise euclidean triangle complex admits an \emph{acute} triangulation, i.e.\ a triangulation all of whose triangles are acute \cite{BZ}. While the piecewise hyperbolic counterpart of that result does not seem to appear in the literature, we can make use of the piecewise euclidean statement in the following way.

\begin{lem}
\label{lem:acute}
Let $Y$ be a two-dimensional piecewise hyperbolic simplicial complex, with finitely many isometry types of simplices. For $n\geq 1$, let $d_n$ be the metric on~$Y$ obtained by replacing every hyperbolic triangle of~$Y$ with side lengths $a, b, c$ by a hyperbolic triangle of side lengths $\frac{a}{n}, \frac{b}{n}, \frac{c}{n}$. There exists $n$ such that $(Y, d_n)$ admits an acute triangulation.
\end{lem}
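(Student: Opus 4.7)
The strategy is to transfer an acute Euclidean triangulation to $(Y, d_n)$, using that small hyperbolic triangles are close to Euclidean ones of the same side lengths. First, I would build a piecewise Euclidean model $Y_{\mathrm{Euc}}$ with the same combinatorial gluing data as $Y$, but with each hyperbolic triangle of side lengths $(a,b,c)$ replaced by the Euclidean triangle with the same side lengths (the triangle inequality for $(a,b,c)$ is inherited from $Y$); this $Y_{\mathrm{Euc}}$ also has finitely many isometry types of simplices. I would then invoke \cite{BZ} to produce an acute Euclidean subdivision $\mathcal{S}$ of $Y_{\mathrm{Euc}}$, arranged so that $\mathcal{S}$ has finitely many isometry types of simplices and the induced subdivision on each edge depends only on the edge's length (so that it automatically matches across any gluing). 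The first property gives a uniform $\delta > 0$ such that every sub-triangle of $\mathcal{S}$ has all angles at most $\pi/2 - \delta$.

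Next, I would transport $\mathcal{S}$ combinatorially to $(Y, d_n)$ for each $n$: on each original hyperbolic triangle $T$ of $(Y, d_n)$ I would place the vertices of the combinatorial subdivision at the corresponding positions (using arc-length along each boundary edge and, for example, barycentric coordinates in the interior of $T$) and join them by hyperbolic geodesics. Consistency on shared edges follows from the edge-length-only property of $\mathcal{S}$, so this defines a global piecewise hyperbolic subdivision $\mathcal{S}_n$ of $(Y, d_n)$. Since we are just cutting each original hyperbolic triangle into smaller hyperbolic sub-triangles inside it, $\mathcal{S}_n$ is isometric to $(Y, d_n)$ as a metric space.

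The main step is then to verify that $\mathcal{S}_n$ is acute for $n$ large. Each sub-triangle of $\mathcal{S}_n$ has diameter $O(1/n)$, so by the Gauss--Bonnet theorem its angle defect is $O(1/n^2)$; more generally, its shape (up to rescaling by $n$) converges, uniformly over the finite collection of isometry types in $\mathcal{S}$, to that of its Euclidean counterpart, so all of its angles converge to the corresponding Euclidean ones. For $n$ large enough, every angle drops below $\pi/2 - \delta/2$, which gives the desired acute triangulation of $(Y, d_n)$.

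I expect the main technical obstacle to be the Euclidean step, namely arranging that \cite{BZ} can be applied so that the subdivision on each edge depends only on its length, despite $Y_{\mathrm{Euc}}$ possibly being non-compact. This can be handled by first prescribing an auxiliary finite subdivision of $[0, \ell]$ for each edge length $\ell$ occurring in $Y$, and then invoking (a relative version of) \cite{BZ} to acutely triangulate each isometry type of Euclidean triangle subject to the prescribed boundary subdivision; the finiteness of isometry types in $Y_{\mathrm{Euc}}$ then ensures only finitely many isometry types of sub-simplices and hence the uniform angle bound $\delta$.
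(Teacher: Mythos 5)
Your overall strategy --- pass to the piecewise Euclidean model $Y_{\mathrm{Euc}}$, apply Burago--Zalgaller, and transfer the acute triangulation back to the rescaled hyperbolic metric using the fact that small hyperbolic triangles converge in shape to Euclidean ones of the same side lengths --- is exactly the strategy of the paper, and the final convergence step is sound: with finitely many isometry types of sub-simplices one gets a uniform angle margin $\delta$, and the angles of geodesic triangles of curvature $-1/n^2$ whose side lengths converge to those of a nondegenerate Euclidean triangle converge to the Euclidean angles.

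The genuine gap is in the step you yourself single out as the main obstacle. You invoke ``(a relative version of) [BZ]'' to acutely triangulate each Euclidean triangle \emph{conforming to a prescribed boundary subdivision}. No such relative statement is in Burago--Zalgaller, and conforming acute triangulation (boundary vertices imposed in advance, no new vertices allowed on the boundary) is a substantially harder problem than the unconstrained one; it cannot simply be cited. The paper circumvents the non-compactness of $Y_{\mathrm{Euc}}$ differently, using only [BZ] as stated: since $(Y,d_\infty)$ has finitely many isometry types of simplices, it admits a quotient map onto a \emph{finite} piecewise Euclidean triangle complex $Y'$ that is an isometry on each simplex; one applies [BZ] to $Y'$ and pulls the acute triangulation back, and consistency across shared edges is then automatic. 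Replacing your ``relative BZ'' step by this quotient trick repairs the argument. A minor further remark: your transfer map (arc length on edges, barycentric coordinates inside) differs from the paper's, which identifies the small hyperbolic triangle with the Euclidean one via the Klein model so that geodesics correspond to straight lines; the paper then has to patch up the fact that the two identifications coming from triangles sharing an edge need not agree on that edge, whereas your arc-length placement matches automatically. Both work, but your assertion that ``the shape converges'' should be backed by checking that the pairwise distances between the placed vertices, rescaled by $n$, converge to the corresponding Euclidean distances.
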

\begin{proof}
For $a\geq b \geq c>0$ satisfying $a<b+c$, let $T$ be the Euclidean triangle with side lengths $a,b,c$, and let $T_n$ be the hyperbolic triangle of side lengths $\frac{a}{n}, \frac{b}{n}, \frac{c}{n}$. Let $nT_n$ denote $T_n$ with metric rescaled by the factor $n$. In other words, $nT_n$ is isometric to a triangle of side lengths $a,b,c$ in the hyperbolic plane rescaled by the factor $n$, hence of curvature $-\frac{1}{n^2}$.

For each $n\geq 1$, let $\psi_n\colon T_n\to D$ be an isometric embedding of $T_n$ in the unit disc~$D$ of $\R^2$ equipped with the Riemannian metric of the Klein model of the hyperbolic plane. We require additionally that $\psi_n(T_n)$ contains the centre $(0,0)$ of $D$. Let $\varphi_n\colon nT_n\to T$ be the diffeomorphism that is the composition of the rescaling map $nT_n\to T_n$, the map $\psi_n$, and the affine map sending $\psi_n(T_n)\subset \R^2$ to~$T$ (respecting the sides). Note that the Riemannian metrics $\sigma_n$ on $T$ pushed forward from $nT_n$ via~$\varphi_n$ converge pointwise to the euclidean Riemannian metric of~$T$, which we call~$\sigma_\infty$.
 Furthermore, the geodesics for $\sigma_{\infty}$
 coincide (up to a reparametrisation) with the geodesics for each $\sigma_{n}$. In particular, the unit tangent vector at $x$ to the unique geodesic $xy$ in the metric $\sigma_n$ converges to the unit tangent vector at $x$ to the unique geodesic $xy$ in the metric~$\sigma_\infty$.

Let $(Y,d_\infty)$ be the piecewise euclidean simplicial complex obtained from~$Y$ by replacing each hyperbolic triangle by the euclidean triangle $T$ with the same side lengths. Since $(Y,d_\infty)$ has finitely many isomorphism types of simplices, it admits a quotient map to a finite piecewise euclidean triangle complex $Y'$ that is an isometry on each of the simplices.
By \cite{BZ}, there is an acute triangulation~$\Delta'$ of $Y'$, which we pull back to an acute triangulation~$\Delta$ of $(Y,d_\infty)$. Then for $T$ of a fixed isometry type, there are only finitely many possibilities for the restriction of $\Delta$ to $T$. By the previous paragraph, for $n$ sufficiently large, pulling back to $nT_n$ the vertices of $\Delta$ in $T$ via $\varphi_n$, and joining the same pairs as in $\Delta$ by geodesic segments in $nT_n$, gives an acute triangulation~$\Delta_n$ of $nT_n$. For $n$ sufficiently large we have that $\Delta_n$ is acute for all $T$ simultaneously.

To find an acute triangulation of some $(Y,d_n)$ it suffices to find an acute triangulation of $(Y,nd_n)$.
We regard $(Y,nd_n)$ as the union of $nT_n$, over $T$ in $(Y,d_\infty)$. We wish to piece together an acute triangulation of $(Y,nd_n)$ from~$\Delta_n$. Note, however, that for a vertex $v$ of $\Delta$ in the interior of an edge of two (or more) triangles $T,T'$ of $(Y,d_\infty)$, the preimages $w_n,w_n'$ of $v$ in the triangles $nT_n,nT'_n$ of $(Y,nd_n)$ under the maps $\varphi_n,\varphi'_n$ might not coincide. In other words, $\Delta_n$ and $\Delta_n'$ might not match on a common edge of $nT_n,nT'_n$. However, the distance between $\varphi_n'(w_n)$ and $\varphi_n'(w_n')=v$ converges to $0$ as $n$ converges to $\infty$. Hence replacing $w_n'$ by $w_n$ in $nT'_n$ (which we do simultaneously in all such configurations, of which there are finitely many up to an isometry), we still obtain an acute triangulation of $nT'_n$ for $n$ sufficiently large. These triangulations piece together to an acute triangulation of $(Y,nd_n)$.
\end{proof}

By Lemma~\ref{lem:acute}, without loss of generality we assume that all the triangles of $(Y,d)$ are acute. In particular, stars are convex, and the union of two triangles sharing an edge is convex.

\smallskip

\paragraph{Notation.}
Let $v$ be a vertex of $Y$. We denote by $\pi_v: Y - \{v\} \rightarrow lk(v)$ the map assigning to each $y\in Y-\{v\}$ the direction of the geodesic from $v$ to $y$ (which is well-defined since geodesics are unique).
The angular distance between two points $x, x' \in lk(v)$ will be denoted $\measuredangle_v(x, x')$. For $y,y'\in Y-\{v\}$ we extend this notation so that $\measuredangle_v(y, y')=\measuredangle_v(\pi_v(y), \pi_v(y')).$
For $k>0$, we define the metric $k$-neighbourhood of a subset $Y'$ of $Y$ as the set $$\mathcal{N}_k(Y') =\{y \in Y ~|~ d(y, Y') < k\}.$$
For a point $y\in Y$, by $\tau_y$ we denote the simplex of $Y$ containing $y$ in its interior.
For a simplex $\tau$ of $Y$, its open star $st(\tau)$ is the union of the interiors of the simplices containing $\tau$.

\begin{defin}
\label{def:alpha}
We fix a constant $\alpha>0$ such that:
	\begin{itemize}
		\item the link of every vertex of $Y$ has girth $\geq 2\pi + 4\alpha$,
		\item every triangle of $Y$ has all angles $\geq 12\alpha$.
	\end{itemize}
\end{defin}

\paragraph{Choosing an open cover.} A key tool in controlling geodesics of $Y$ will be to subdivide them in pieces that are easier to understand locally. This will be done by means of an appropriate cover of $Y$, which will take us some time to define. The cover will consist of an open set $U_\tau$ for each simplex $\tau$ of $Y$. We will also define a constant $\epsilon$ (distinct from $\eps$ in Convention~\ref{con}) that depends only on $Y$. Their main properties will be:

\begin{description}
\item[($*$)] Each $\mathcal {N}_\epsilon(U_\tau)$ is contained in $st(\tau)$.
\item[($\cap$)] If $U_\tau$ intersects $U_{\tau'}$, then $\tau$ contains $\tau'$ or vice versa.
\end{description}

To start with, since $Y$ has only finitely many isometry types of simplices, we can fix a constant $\epsilon_0 >0$ such that the balls
$$U_v=\mathcal {N}_{6\epsilon_0}(v)$$ around the vertices $v$ of~$Y$ satisfy $\mathcal {N}_{4\epsilon_0}(U_v)\subset st(v)$ and property ($\cap$) for $\tau,\tau'$ vertices. Before we proceed with the construction of the remaining elements of the cover, we need the following.

\begin{defin}\label{lem:continuity_proj}
We fix a constant $0<\epsilon\leq \epsilon_0$ such that for each vertex $v$ of~$Y$ and a pair of points $x, y \in Y - \mathcal N_{\epsilon_0}(v)$ with $d(x, y) \leq \epsilon$, we have $$ \measuredangle_{v}(x, y) \leq \alpha.$$
(Such a constant exists, since $\pi_v$ is Lipschitz on $Y - \mathcal N_{\epsilon_0}(v)$.)
\end{defin}

Now, for an edge $e=vw$ of $Y$, we define
$$U_e= \mathcal{N}_{4\epsilon}(e  -  U_v \cup U_w).$$
Note that for any point $x\in U_e$, we have $d(v,x)\geq d(v,Y-U_v)-4\epsilon\geq 6\epsilon_0-4\epsilon\geq 2\epsilon_0$.
Then by Definitions~\ref{def:alpha} and~\ref{lem:continuity_proj}, for any edge $f=vu$ with $u\neq w$, we have $\measuredangle_v(x,u)\geq 12\alpha-4\alpha>0$ (since for $x'\in e-U_v \cup U_w$ with $d(x,x')<4\eps$ we have $\measuredangle_v(x',u)\geq 12\alpha$ and $\measuredangle_v(x,x')\leq 4\alpha$). Consequently, $U_e$ is disjoint from $f$, and so we have property~($*$) for $\tau$ an edge. Similarly we have property~($\cap$) for $\tau,\tau'$ edges. Property~($\cap$) for $\tau$ an edge and $\tau'$ a vertex follows from $\mathcal {N}_{4\epsilon_0}(U_v)\subset st(v)$ for $v=\tau'$.

Note also that for points $v',w'\in e$ at distance $6\epsilon_0-2\epsilon$ from $v,w$, we have $\mathcal{N}_{2\epsilon}(vv')\subset U_v$ and $\mathcal{N}_{2\epsilon}(v'w')\subset U_e$, and consequently $\mathcal{N}_{2\epsilon}(e)\subset U_v\cup U_e\cup U_w$. This is why when for $\sigma$ a triangle of $Y$, we define $$U_\sigma=\mathcal{N}_{\epsilon} \big(\sigma-\bigcup_{\tau\subset \partial \sigma}U_\tau\big),$$ we have property ($*$) and consequently property ($\cap$) for $\tau$ a triangle and $\tau'$ arbitrary. Furthermore, again by Definitions~\ref{def:alpha} and~\ref{lem:continuity_proj} we have the following:

\begin{cor}\label{cor:bounded_below_angle} Let $v$ be a vertex of $Y$, and let $\tau, \tau'\neq v$ be simplices of  $Y$ containing~$v$ and such that neither of them is contained in the other. Then for every $x \in U_\tau$ and $y\in U_{\tau'}$, we have
	 $$\measuredangle_{v}(x, y) \geq 4\alpha.$$
\end{cor}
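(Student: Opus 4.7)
The plan is to approximate $x,y$ by points $x'\in\tau-\bigcup_{\tau''\subset\partial\tau}U_{\tau''}$ and $y'\in\tau'-\bigcup_{\tau''\subset\partial\tau'}U_{\tau''}$ in the ``deep interiors'' of $\tau$ and $\tau'$, control the angular error $\measuredangle_v(x,x'),\measuredangle_v(y,y')$ via Definition~\ref{lem:continuity_proj}, and then reduce the lower bound on $\measuredangle_v(x',y')$ to a link-metric computation at $v$.

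First, by the definition of $U_\tau$ and $U_{\tau'}$, I can pick such $x',y'$ with $d(x,x'),d(y,y')\leq 4\epsilon$: the neighborhood constant is $4\epsilon$ when $\tau$ is an edge and $\epsilon$ when $\tau$ is a triangle, hence at most $4\epsilon$ in both cases. Since $x',y'\notin U_v$, we have $d(v,x'),d(v,y')\geq 6\epsilon_0$; in particular $d(v,x)\geq 6\epsilon_0-4\epsilon\geq 2\epsilon_0$, and the geodesic segments $[x,x']$ and $[y,y']$ stay in $Y\setminus\mathcal N_{\epsilon_0}(v)$ (here I use $\epsilon\leq\epsilon_0/4$, which we may arrange when fixing $\epsilon$). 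I then chain Definition~\ref{lem:continuity_proj}: subdividing $[x,x']$ into at most four subsegments of length $\leq\epsilon$ at points still in $Y\setminus\mathcal N_{\epsilon_0}(v)$ and summing the resulting angular estimates gives $\measuredangle_v(x,x')\leq 4\alpha$, and likewise $\measuredangle_v(y,y')\leq 4\alpha$.

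The crux is to show $\measuredangle_v(x',y')\geq 12\alpha$. The projection $\pi_v(x')$ lies in the closed simplex $\overline{\widetilde\tau}\subset lk(v)$ associated with $\tau$ (a single vertex if $\tau$ is an edge, a closed edge if $\tau$ is a triangle), and $\pi_v(y')\in\overline{\widetilde{\tau'}}$. Since $\tau,\tau'$ are distinct simplices with neither contained in the other, $\overline{\widetilde\tau}$ and $\overline{\widetilde{\tau'}}$ are distinct simplices of $lk(v)$. I then split into cases according to whether these closed simplices are disjoint in $lk(v)$ or share a vertex (the latter occurring precisely when $\tau,\tau'$ share an edge through $v$). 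In the disjoint case, any path in $lk(v)$ between them must traverse at least one full edge, and by Definition~\ref{def:alpha} every edge of $lk(v)$ has length $\geq 12\alpha$, so $d_{lk(v)}(\pi_v(x'),\pi_v(y'))\geq 12\alpha$. In the shared-vertex case, I use the extra input that $x',y'$ are at $Y$-distance $\geq 2\epsilon$ from the edges of $\tau,\tau'$ through $v$ (a consequence of $\mathcal N_{2\epsilon}(e)\subset U_v\cup U_e\cup U_w$ established just before the corollary), and translate this through hyperbolic trigonometry applied in each of $\tau,\tau'$ into lower bounds on $\measuredangle_v(\pi_v(x'),\widetilde{vw})$ and $\measuredangle_v(\pi_v(y'),\widetilde{vw})$ whose sum forces the direct path through the shared link-vertex to have length $\geq 12\alpha$. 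I expect this shared-vertex subcase to be the main technical hurdle of the proof.

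Finally, by the triangle inequality for the angular metric at $v$:
$$\measuredangle_v(x,y)\geq \measuredangle_v(x',y')-\measuredangle_v(x,x')-\measuredangle_v(y,y')\geq 12\alpha-4\alpha-4\alpha=4\alpha,$$
as required.
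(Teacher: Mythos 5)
Your overall strategy is exactly the one the paper intends: replace $x,y$ by points $x',y'$ in the ``cores'' of $\tau,\tau'$, chain Definition~\ref{lem:continuity_proj} to get $\measuredangle_v(x,x'),\measuredangle_v(y,y')\leq 4\alpha$, and derive $\measuredangle_v(\pi_v(x'),\pi_v(y'))\geq 12\alpha$ from the link geometry (this is the same computation carried out just after the definition of $U_e$, where $\measuredangle_v(x,u)\geq 12\alpha-4\alpha$ is obtained). Your first case is correct: when $\widetilde{\tau}$ and $\widetilde{\tau'}$ are disjoint closed simplices of $lk(v)$, any path between them contains a subpath joining two distinct vertices of $lk(v)$, whose length is at least one edge length, i.e.\ at least one triangle angle, hence $\geq 12\alpha$ by Definition~\ref{def:alpha}. (Incidentally, your extra requirement $\epsilon\leq\epsilon_0/4$ is not needed: $d(v,x')\geq 6\epsilon_0$ and $d(x,x')<4\epsilon\leq 4\epsilon_0$ already keep $[x,x']$ outside $\mathcal N_{\epsilon_0}(v)$.)

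The shared-vertex subcase is, however, a genuine gap, and the trigonometric argument you propose cannot be made to work. If $\tau,\tau'$ are two triangles sharing an edge $vw$, a point $x'$ in the core of $\tau$ need only be at distance of order $\epsilon$ from $vw$, while its distance to $v$ can be as large as $\mathrm{diam}(\tau)$; so the best lower bound on $\measuredangle_v(x',w)$ available from ``$x'$ is $2\epsilon$ away from $vw$'' is of order $\epsilon/\mathrm{diam}(\tau)$. Since $\epsilon$ is chosen in Definition~\ref{lem:continuity_proj} \emph{after} $\alpha$, precisely so that displacements of size $\epsilon$ change angles at $v$ by at most $\alpha$ (morally $\epsilon\lesssim\alpha\epsilon_0$), this quantity is far below the $6\alpha$ you would need. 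In fact the inequality genuinely fails in this configuration: taking $x$ and $y$ at height $4\epsilon$ over the midpoint of a long shared edge $vw$, on either side, one gets $\measuredangle_v(x,y)$ of order $\epsilon/|vw|$, which can be made smaller than $4\alpha$. The resolution is that the corollary is only ever invoked with the stronger hypothesis $\tau\cap\tau'=\{v\}$: in Lemmas~\ref{lem:stable_local} and~\ref{lem:stable_global} it is applied to consecutive simplices $\Sigma_{i-1},\Sigma_i$ of an anchored decomposition whose intersection $\sigma_i$ is the vertex $v$ itself (the case where consecutive simplices meet along an edge is handled separately, by convexity of the union of two triangles sharing an edge). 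Under that hypothesis $\widetilde{\tau}$ and $\widetilde{\tau'}$ are disjoint in $lk(v)$ and your first case completes the proof; you should add this hypothesis (or restrict to this case) rather than attempt to close the shared-vertex case.
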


\paragraph{Galleries and extended galleries.}

\begin{defin}[Gallery]\label{def:gallery}
Let $\gamma$ be a geodesic segment in $Y$. We denote by~$Gal(\gamma)$ the minimal subcomplex of $Y$ that contains $\gamma$, and we call it the \textit{gallery} of~$\gamma$.  It is the union of all the simplices $\tau_y$ over $y\in \gamma$. \end{defin}

We now want to slightly enlarge $Gal(\gamma)$.

\begin{defin}[Extended gallery]\label{def:stable_gallery}
Let $\gamma$ be a geodesic segment in $Y$, let $Gal(\gamma)$ be its gallery, and let $V(\gamma)$ be the (possibly empty) set of vertices of $Y$ contained in~$\gamma$  and which are not an endpoint of $\gamma$. For each $v \in V(\gamma)$, we perform the following construction.

Since the geodesic $\gamma$ passes through $v$, $\gamma$ defines two points $x_1, x_2$ in the link $lk(v)$ at angular distance $\geq \pi$. Since vertex links have girth $\geq 2\pi + 4\alpha$, there exists at most one geodesic $\ell_v$ of $lk(v)$ (for the angular metric) of length $<\pi+2\alpha$ between $x_1$ and $x_2$. If no such geodesic $\ell_v$ exists, we set
$E_v = \emptyset.$ Otherwise, let $E_v$ be the set of edges of the minimal subgraph of $lk(v)$ containing $\ell_v$. Each edge $e$ of $E_v$ corresponds to a triangle $\sigma_e$ of $Y$ containing $v$.
We set $$Gal^{*}(\gamma) = Gal(\gamma) \cup  \bigcup_{v \in V(\gamma)} \bigcup_{e \in E_v} \sigma_e ,$$
which we call the \textit{extended gallery} of $\gamma.$
\end{defin}

\begin{rem}
\label{rem:extendedsize}
By \cite[Thm~I.7.28]{BH}, and since angles of triangles in $Y$ are bounded from below, there is a constant $C$ such that for each geodesic segment $\gamma$ of length $|\gamma|$, the extended gallery $Gal^{*}(\gamma)$ contains at most $C|\gamma|$ vertices.
\end{rem}

\paragraph{Appropriate subdivision of a geodesic.}

\begin{defin}[Decomposition]
Let $\gamma = xy$ be a geodesic in $Y$, oriented from $x$ to $y$. By property ($\cap$), we can choose a shortest sequence of simplices $(\Sigma_i)_{i=0}^{k+1}$ such that there are points $x_i\in U_{\Sigma_i}$ lying on $\gamma$ in that order, with $x_0=x,x_{k+1}=y$ and $\sigma_i=\Sigma_{i-1}\cap \Sigma_{i}\neq \emptyset$.
We call the data of all $\Sigma_i$ and $x_i$ (and $\sigma_i$ determined by them) a \textit{decomposition} of~$\gamma$.
\end{defin}

\begin{rem}
\label{rem:anchored} If in each pair $\Sigma_i, \Sigma_{i+1}$ none of the simplices is contained in the other, we say that the decomposition is \emph{anchored}. Note that by the minimality of~$k$, $\Sigma_i$ cannot be contained in $\Sigma_{i+1}$ unless $i=0$, and
$\Sigma_i$ cannot contain $\Sigma_{i+1}$ unless $i=k$. Thus the geodesic $x_1x_k$ has an anchored decomposition obtained by discarding $\Sigma_0, \Sigma_{k+1}$.\end{rem}

\paragraph{Technical lemma.} The following lemma is the key technical result allowing us to control the simplices met by a geodesic sufficiently close to another one. We advise the reader to skip its proof during a first reading.

\begin{lem}\label{lem:stable_global} Let $\gamma=xy,\gamma' = x'y'$ be two geodesics in $Y$ with $d(x,x'),d(y,y')<\epsilon$. Suppose that $\gamma$ has an anchored decomposition with $k\geq 0$. Then
	$$Gal(\gamma') -   \tau_{x'} \cup \tau_{y'} \subset  Gal^*(\gamma).$$
\end{lem}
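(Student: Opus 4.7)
The strategy is to use the convexity of the distance function in CAT($-1$) to force $\gamma'$ to track $\gamma$ pointwise within $\epsilon$, then invoke property $(*)$ of the cover together with the anchored structure of the decomposition to constrain which simplices $\gamma'$ can meet, with an additional local analysis at the vertices $v \in V(\gamma)$ to account for the extra triangles added in $Gal^{*}(\gamma)$.

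First I would parameterise $\gamma$ and $\gamma'$ proportionally on $[0,1]$. In the CAT($0$) (hence CAT($-1$)) space $Y$ the function $t \mapsto d(\gamma(t), \gamma'(t))$ is convex, and since it is $< \epsilon$ at $t=0$ and $t=1$ it stays $< \epsilon$ throughout. In particular every $y' \in \gamma'$ admits some $y \in \gamma$ with $d(y, y') < \epsilon$.

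Next, fix $y' \in \gamma' \setminus (\tau_{x'} \cup \tau_{y'})$ and pick the corresponding $y \in \gamma$. Let $\tau$ be any simplex of $Y$ with $y \in U_\tau$. By property $(*)$ we get $y' \in \mathcal{N}_\epsilon(U_\tau) \subseteq st(\tau)$, so $\tau_{y'}$ is a coface of $\tau$. Applied to $\tau = \Sigma_i$ at $y = x_i$ this already shows $\Sigma_i \in Gal(\gamma)$; more generally, the minimality of the decomposition together with property $(\cap)$ forces any $\tau$ whose $U_\tau$ meets $\gamma$ away from the vertices of $\gamma$ to be nested with some $\Sigma_i$ or with a transition face $\sigma_i = \Sigma_{i-1} \cap \Sigma_i$. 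All such $\tau$ (and their relevant cofaces $\tau_{y'}$ reachable from $\gamma'$) lie in $Gal(\gamma) \subseteq Gal^{*}(\gamma)$.

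The residual case is when $y$ is (or is close to) a vertex $v \in V(\gamma)$ and $\gamma'$ takes a short detour around $v$ through triangles not in $Gal(\gamma)$. Here I would use the girth hypothesis $\geq 2\pi + 4\alpha$ on $lk(v)$ together with the continuity estimate of Definition~\ref{lem:continuity_proj}: the two directions $x_1, x_2 \in lk(v)$ along which $\gamma$ leaves $v$ are at angular distance $\geq \pi$, while the entry and exit directions of $\gamma'$ into $st(v)$, measured just outside $\mathcal{N}_{\epsilon_0}(v)$, differ from $x_1, x_2$ by at most $\alpha$ each. Consequently the path traced out in $lk(v)$ by this detour has length $< \pi + 2\alpha$, so by the uniqueness built into Definition~\ref{def:stable_gallery} (the girth exceeds $2\pi + 2\alpha$) it must coincide with $\ell_v$. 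Therefore the triangles met by $\gamma'$ near $v$ are exactly those whose link edges lie in $E_v$, and they are contained in $Gal^{*}(\gamma)$ by construction.

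The main obstacle I expect is the vertex bookkeeping: verifying that the shadow of $\gamma'$ in $lk(v)$ is a single geodesic of length less than $\pi + 2\alpha$ (so the uniqueness clause in Definition~\ref{def:stable_gallery} applies) rather than a longer path or several disjoint arcs, and cleanly handling the moments where $\gamma'$ enters and leaves $st(v)$. An induction on $k$, peeling off one transition $\sigma_i$ at a time, would be a natural way to organise this bookkeeping, with the $k = 0$ base case reducing to a direct two-simplex analysis.
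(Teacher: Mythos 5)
Your overall shape is right---convexity of the metric gives pointwise $\epsilon$-tracking, and the heart of the matter is a local analysis in vertex links using the girth bound and Definition~\ref{lem:continuity_proj}---but two steps that you treat as routine are where the proof actually lives, and as written they are gaps. First, from $d(y,y')<\epsilon$ and $y\in U_\tau$, property ($*$) only gives $\tau\subseteq\tau_{y'}$; it does not give $\tau_{y'}\subseteq Gal(\gamma)$. When $\tau$ is an edge or a vertex, $\tau_{y'}$ could a priori be any of the (possibly infinitely many, e.g.\ in $\Phi^*$) triangles containing $\tau$, most of which are not in $Gal(\gamma)$. Your parenthetical ``relevant cofaces reachable from $\gamma'$'' is precisely the statement to be proved. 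The paper excludes stray triangles at an edge transition by using that all triangles are acute, so the union of the two triangles of $Gal(\gamma)$ adjacent along that edge is convex and hence contains the corresponding piece of $\gamma'$; at a vertex transition it uses the link analysis you sketch. Note also that the vertices requiring a link analysis are not only those of $V(\gamma)$: in the anchored decomposition $\sigma_i$ can be a vertex that $\gamma$ passes near but not through (then $\measuredangle_{\sigma_i}(x_{i-1},x_i)<\pi$, and one argues that the shadow of $\gamma'$ in the link deviates from that of $\gamma$ by at most $\alpha<12\alpha$ at each end, so only the first and last triangles can differ and these are $\tau_{x'}$, $\tau_{y'}$); and one must separately treat the case $\measuredangle_{\sigma_i}(x'_{i-1},x'_i)\geq\pi$, where $\gamma'$ itself passes through the vertex and has no shadow there.

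Second, and more seriously, the ``bookkeeping'' you defer at the seams is a genuine additional argument, not mere organisation. If you cut $\gamma'$ at points $x'_i$ with $d(x_i,x'_i)<\epsilon$ and apply the local statement to each piece $x'_{i-1}x'_i$, each application excludes the simplices containing that piece's endpoints, so you are left having to show $\tau_{x'_i}\subseteq Gal^*(\gamma)$ for each interior seam point $x'_i$. This is the Claim in the paper's proof: assuming $\tau_{x'_i}$ lies in neither $Gal^*(\gamma_i)$ nor $Gal^*(\gamma_{i+1})$, one deduces that $\Sigma_i$ is an edge and shows, via a further link computation at each endpoint of $\Sigma_i$, that both $\gamma'_i$ and $\gamma'_{i+1}$ meet $\Sigma_i$; this contradicts the convexity of $\Sigma_i$ together with $x'_i\notin\Sigma_i$. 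Without this step the local analyses do not concatenate, so the proposal as it stands does not yet yield the lemma.
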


We first prove a local version of Lemma \ref{lem:stable_global}:

\begin{lem}\label{lem:stable_local} Lemma \ref{lem:stable_global} holds under the additional assumption that the anchored decomposition of $\gamma$ has $k=0$.
 \end{lem}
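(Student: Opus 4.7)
The plan is to distinguish cases based on the dimension of $\sigma_1 = \Sigma_0 \cap \Sigma_1$: by the anchored hypothesis, $\sigma_1$ is a proper face of each $\Sigma_i$, so either $\sigma_1$ is an edge (and both $\Sigma_i$ are the two triangles sharing it) or $\sigma_1 = \{v\}$ for a single vertex $v$. Before splitting, I use convexity of distance in the $\mathrm{CAT}(-1)$ space $Y$ to deduce that $\gamma'$ lies in the $\epsilon$-neighbourhood of $\gamma$; since $k=0$ gives $\gamma \subset U_{\Sigma_0}\cup U_{\Sigma_1}$, property $(*)$ then forces $\gamma' \subset st(\Sigma_0)\cup st(\Sigma_1)$, so every simplex met by $\gamma'$ contains $\Sigma_0$ or $\Sigma_1$. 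In the vertex case, in particular, every such simplex also contains $v$.

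If $\sigma_1$ is an edge, property $(*)$ forces $\tau_{x'} = \Sigma_0$ and $\tau_{y'} = \Sigma_1$; the union of two triangles sharing an edge is convex by our acute-triangulation reduction, so $\gamma' \subset \Sigma_0 \cup \Sigma_1 = \tau_{x'} \cup \tau_{y'}$ and the inclusion is vacuous. The substantive case is $\sigma_1 = \{v\}$. Setting $x_1 = \pi_v(x)$, $x_2 = \pi_v(y)$, $x_1' = \pi_v(x')$, $x_2' = \pi_v(y')$, Definition \ref{lem:continuity_proj} gives $\measuredangle_v(x_i, x_i') \leq \alpha$ since all four points sit at distance at least $\epsilon_0$ from $v$. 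The essential tool is then the girth bound on $lk(v)$: two distinct paths in $lk(v)$ between common endpoints with lengths summing to less than $2\pi + 4\alpha$ cannot coexist.

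When $\gamma$ avoids $v$, both $\pi_v(\gamma)$ and $\pi_v(\gamma')$ are geodesics in $lk(v)$ of length less than $\pi$; concatenating $\pi_v(\gamma)$ with short geodesics of length $\leq \alpha$ near the endpoints yields a path from $x_1'$ to $x_2'$ of length less than $\pi + 2\alpha$, and the girth bound forces $\pi_v(\gamma')$ to coincide with this concatenation. The edges of $lk(v)$ traversed by $\pi_v(\gamma')$ therefore lie either in $\pi_v(\gamma)$ (hence correspond to simplices in $Gal(\gamma)$) or in the short side-geodesics near $x_1, x_2$. Since edges of $lk(v)$ have length $\geq 12\alpha$ by Definition \ref{def:alpha}, each short side-geodesic is confined to the union of at most two adjacent edges of $lk(v)$, corresponding to $\tau_x, \tau_{x'}$ (respectively $\tau_y, \tau_{y'}$), and $\tau_x, \tau_y$ already belong to $Gal(\gamma)$. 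When $\gamma$ passes through $v$, if $\gamma'$ also does then each half of $\gamma'$ is a geodesic from an interior point to a vertex of a containing simplex, hence stays inside $\tau_{x'}$ or $\tau_{y'}$; otherwise $\pi_v(\gamma')$ is a geodesic of length less than $\pi$, which by the triangle inequality forces $\measuredangle_v(x_1, x_2) < \pi + 2\alpha$, so $\ell_v$ exists and $E_v$ is nontrivial. The girth argument then pins $\pi_v(\gamma')$ to the concatenation of $\ell_v$ with short side-geodesics, so the triangles through $v$ crossed by $\gamma'$ are all recorded in $E_v \subset Gal^*(\gamma)$ together with $\tau_{x'}, \tau_{y'}$.

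The main obstacle is the vertex case: reading off the simplices of $Y$ through $v$ hit by $\gamma'$ from the combinatorial structure of $\pi_v(\gamma')$ in $lk(v)$, and exploiting the girth bound $\geq 2\pi + 4\alpha$ together with the $\alpha$-continuity of projections (Definition \ref{lem:continuity_proj}) to rule out any stray triangle through $v$ not already captured by $\tau_{x'}, \tau_{y'}$, $Gal(\gamma)$, or $E_v$.
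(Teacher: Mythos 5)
Your overall strategy is the paper's: split on whether $\sigma_1=\Sigma_0\cap\Sigma_1$ is an edge or a vertex $v$, dispose of the edge case by convexity of the union of two acute triangles, and in the vertex case project to $lk(v)$ and use the girth bound $\geq 2\pi+4\alpha$ together with Definition~\ref{lem:continuity_proj} and the edge-length bound $\geq 12\alpha$ from Definition~\ref{def:alpha} to pin $\pi_v(\gamma')$ to the concatenation of $\pi_v(\gamma)$ (or $\ell_v$) with two short side-geodesics; your case division by whether $\gamma$ passes through $v$, rather than whether $\gamma'$ does, is only an organisational difference. Two points need repair. First, the assertion that $k=0$ gives $\gamma\subset U_{\Sigma_0}\cup U_{\Sigma_1}$ is not part of the definition of a decomposition and is false in general: if, say, $\Sigma_0$ and $\Sigma_1$ are two edges meeting only at $v$, the geodesic from $U_{\Sigma_0}$ to $U_{\Sigma_1}$ typically crosses regions covered only by sets $U_\sigma$ for triangles $\sigma$ or by $U_v$. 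What you actually need --- that every simplex met by $\gamma'$ contains $v$ --- follows instead from property ($*$) (namely $x'\in\mathcal N_\epsilon(U_{\Sigma_0})\subset st(\Sigma_0)\subseteq st(v)$ and likewise for $y'$) together with the convexity of $st(v)$ guaranteed by the acute-triangulation reduction; this is how the paper argues, and it does not use the $\epsilon$-fellow-travelling of $\gamma'$ with $\gamma$ at all. Second, in your sub-case ``$\gamma$ avoids $v$'' you tacitly assume that $\gamma'$ also avoids $v$ when you declare $\pi_v(\gamma')$ to be a geodesic of length $<\pi$; if $\gamma'$ passes through $v$ the conclusion is immediate, since each half of $\gamma'$ is a geodesic from $x'$ (or $y'$) to the vertex $v$ of $\tau_{x'}$ (or $\tau_{y'}$) and so stays in that simplex --- this is exactly the paper's Case 1 --- but the possibility has to be addressed. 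With these repairs your argument matches the paper's.
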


\begin{proof}
Let $\Sigma_0,\Sigma_1$ be the simplices of the anchored decomposition of $\gamma$.
First notice that
if $\sigma_1$ is an edge, then $\Sigma_0,\Sigma_1$ are triangles and so by property ($*$) we have $\tau_{x'}\cup\tau_{y'}=\Sigma_0\cup \Sigma_1$, which is is convex. Thus we have $\gamma'\subset \tau_{x'}\cup \tau_{y'}$ and consequently $Gal(\gamma')  - \tau_{x'}\cup \tau_{y'} = \emptyset$ so the lemma follows. We can thus assume that $\sigma_1$ is a vertex~$v$. \\

\noindent \textbf{Case 1.} $\measuredangle_v(x', y') \geq \pi$.\\

Then~$\gamma'$ passes through~$v$. Thus $Gal(\gamma')  - \tau_{x'}\cup \tau_{y'} = \emptyset$ and the lemma follows.\\

\noindent \textbf{Case 2.} $\measuredangle_v(x', y') <\pi$.\\

Since the decomposition was anchored, Corollary~\ref{cor:bounded_below_angle} implies $ \measuredangle_v(x, y)\geq 4 \alpha$. Moreover, Definition~\ref{lem:continuity_proj} and the fact that $$d(v,x')\geq d(v,x)-d(x,x')\geq d(v,Y-U_v)-4\epsilon-\epsilon\geq 6\epsilon_0-5\epsilon\geq \epsilon_0$$ implies $\measuredangle_v(x, x')\leq \alpha$ and similarly $\measuredangle_v(y, y')\leq \alpha$. In particular, the geodesics $\pi_{v}(x)\pi_{v}(x'),\pi_{v}(y)\pi_{v}(y')$ in $lk(v)$ are disjoint and $\measuredangle_v(x, y) < \pi + 2\alpha$. Since $lk(v)$ has girth $\geq 2\pi + 4\alpha$,
the convex hull of the points $\pi_{v}(x),\pi_{v}(y),\pi_{v}(x'),\pi_{v}(y')$ in $lk(v)$ is a tree indicated in Figure~\ref{Figure_lk_tree_1} (possibly degenerate, which happens for example if $\pi_v(x)$ lies on the geodesic $\pi_v(x')\pi_v(y')$). Since $st(v)$ is convex and contains $x',y'$ by property ($*$), we have that $Gal(\gamma')$ consists of the triangles $\sigma_e$ with $e$ in the geodesic $\pi_{v}(x')\pi_{v}(y')$ in $lk(v)$. Since triangle angles are $>\alpha$, we have that among these~$\sigma_e$ only $\tau_{x'}, \tau_{y'}$ might not lie in $Gal^*(\gamma)$, as desired.
\end{proof}

\begin{figure}[H]
\begin{center}
 \scalebox{1}{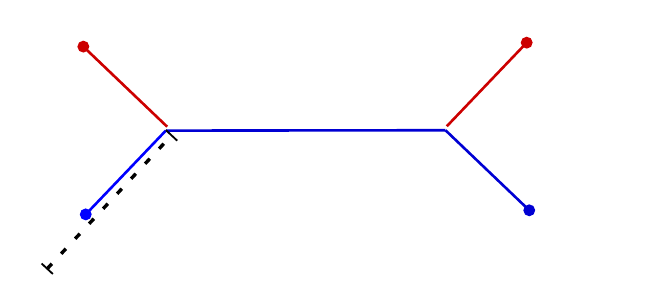}
\caption{Convex tree in the link of $v$.}
\label{Figure_lk_tree_1}
\end{center}
\end{figure}

\begin{proof}[Proof of Lemma \ref{lem:stable_global}]
We decompose $\gamma'$ as a concatenation $\gamma' = \gamma_1' \cup \cdots \cup \gamma_{k+1}'$ with $\gamma_i' = x_{i-1}' x_i'$ for a sequence of points $x_0'=x', x_1', \ldots, x_{k+1}'=y' \in \gamma'$ such that $d(x_i,x_i')<\epsilon$
for all $i$. Let $\gamma_i=x_{i-1}x_i$.
 By Lemma~\ref{lem:stable_local}, for each $1 \leq i \leq k+1$, we have  $Gal(\gamma_i') -  \tau_{x_{i-1}'}\cup \tau_{x_i'} \subset Gal^*(\gamma_i)$. To conclude that $Gal(\gamma') - \tau_{x_0'} \cup \tau_{x_{k+1}'} \subset Gal^*(\gamma)$, it suffices to prove the following.\\

\noindent \textbf{Claim.} For every $1 \leq i \leq k$, we have $ \tau_{x_i'} \subset Gal^*(\gamma_i) \cup Gal^*(\gamma_{i+1}).$ \\

To justify the claim, let us assume by contradiction that for some $1 \leq i \leq k$, the simplex $\tau_{x_i'}$ is neither contained in $Gal^*(\gamma_i)$ nor in $Gal^*(\gamma_{i+1})$. Then in particular $\tau_{x_i'} \neq \tau_{x_i}$, so by property ($*$) $\Sigma_i$ is not a triangle and thus it is an edge.
Furthermore, $\tau_{x_i}$ and $\tau_{x'_i}$ contain $\Sigma_i$, so in particular $\Sigma_i\subset Gal^*(\gamma_i)$, and thus $x'_i\notin \Sigma_i$.
For simplicity, let us denote the vertices $\sigma_i, \sigma_{i+1}$ of $\Sigma_i$ by $v_{i}$ and $v_{i+1}$ respectively. We will prove that each of $\gamma_i', \gamma_{i+1}'$ intersects $\Sigma_i$, which contradicts the convexity of $\Sigma_i$ and $x'_i\notin \Sigma_i$. To show, say, $\gamma_i'\cap \Sigma_i\neq \emptyset$, we consider the following cases.\\

\noindent \textbf{Case 1.} $\measuredangle_{v_{i}}(x'_{i-1}, x'_i) \geq \pi.$\\

Then $\gamma_i'$ passes through $v_i\in \Sigma_i$, as required.\\

\noindent \textbf{Case 2.}  $\measuredangle_{v_{i}}(x'_{i-1}, x'_i) <\pi.$\\

Since the decomposition was anchored, Corollary~\ref{cor:bounded_below_angle} implies $\measuredangle_{v_i}(x_{i-1}, x_i)\geq 4 \alpha$. Moreover, as before Definition~\ref{lem:continuity_proj} implies $\measuredangle_{v_i}(x_{i-1}, x'_{i-1})\leq \alpha$ and $\measuredangle_{v_i}(x_i, x_i')\leq \alpha$. In particular, the geodesics $\pi_{v_i}(x_{i-1})\pi_{v_i}(x_{i-1}'),\pi_{v_i}(x_i)\pi_{v_i}(x_i')$ in $lk(v_i)$ are disjoint and $\measuredangle_{v_i}(x_{i-1}, x_i) < \pi + 2\alpha$. Since $lk(v_i)$ has girth $\geq 2\pi + 4\alpha$,
the convex hull of the points $\pi_{v_i}(x_{i-1}),\pi_{v_i}(x_{i}),\pi_{v_i}(x'_{i-1}),\pi_{v_i}(x'_i)$ in $lk(v_i)$ is a tree indicated in Figure~\ref{Figure_lk_tree_3}.
Note that since $\tau_{x_i'}$ is not contained in $Gal^*(\gamma_i)$, the point $\pi_{v_i}(x_i')$ does not lie on the geodesic  $\pi_{v_i}(x_{i-1})\pi_{v_i}(x_i)$ in $lk(v_i)$.

Suppose first that the point $\pi_{v_i}(x_i)$ does not lie on the geodesic $\pi_{v_i}(x'_{i-1})\pi_{v_i}(x'_i)$ in $lk(v_i)$.
Then, since $x_i\in U_{\Sigma_i}$, the branching point indicated in the figure must be $\pi_{v_i}(v_{i+1})$ (all other branching points of $lk(v_i)$ are at distance $\geq 12\alpha-4\alpha$ from $\pi_{v_i}(x_i)$ by Definition~\ref{lem:continuity_proj}), which justifies $\gamma_i'\cap \Sigma_i\neq \emptyset$. Secondly, suppose that the point $\pi_{v_i}(x_i)$ lies on the geodesic  $\pi_{v_i}(x'_{i-1})\pi_{v_i}(x'_i)$ in $lk(v_i)$. Then, since $\pi_{v_i}(v_{i+1})$ lies on the geodesic $\pi_{v_i}(x_{i})\pi_{v_i}(x'_i)$ in $lk(v_i)$, we have $\gamma_i'\cap \Sigma_i\neq \emptyset$ as well.

\begin{figure}[H]
\begin{center}
 \scalebox{1}{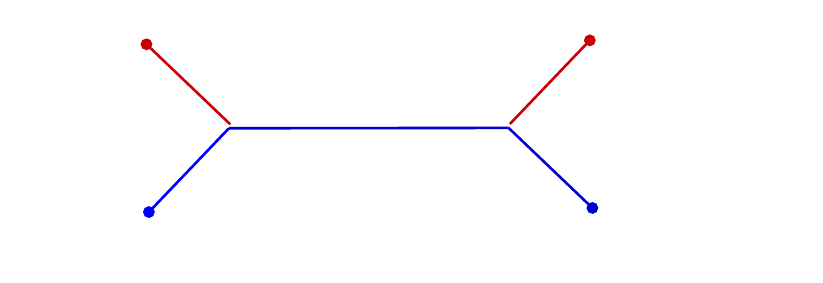}
\caption{}
\label{Figure_lk_tree_3}
\end{center}
\end{figure}

\end{proof}

\paragraph{Proof of Theorem \ref{thm:main_acyl}.} We are finally ready to prove Theorem \ref{thm:main_acyl}.
We will use the following variant of \cite[Lem~3.10]{K}.
\begin{lem}
\label{lem:fellow_travel}
For each $r\geq\epsilon>0$ there is $l>0$ satisfying the following.
Let $\gamma=xy$ be a geodesic in a $\mathrm{CAT}(-1)$ space $Y$,
and let $g$ be an isometry of~$Y$ with $d(x, gx) \leq r$ and $d(y, gy)\leq r$. For each subsegment $\gamma'$ of $\gamma$ with endpoints at distance $\geq l$ from $x$ and $y$, we have that the $r$-neighbourhood $\gamma'_+$ of $\gamma'$ in $\gamma$ satisfies $g\gamma'\subset \mathcal N_\epsilon (\gamma'_+)$.
\end{lem}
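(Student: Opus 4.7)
The plan is to recognize this as the standard CAT($-1$) fellow-traveling statement and execute its proof via hyperbolic comparison. I would parametrize $\gamma$ by arc length as $\gamma\colon[0,L]\to Y$ with $\gamma(0)=x$, $\gamma(L)=y$, where $L=d(x,y)$; since $g$ is an isometry, $g\gamma\colon[0,L]\to Y$ defined by $g\gamma(t)=g(\gamma(t))$ is a geodesic from $gx$ to $gy$ with matching parametrization, and the two endpoint distances $d(\gamma(0),g\gamma(0))$ and $d(\gamma(L),g\gamma(L))$ are both $\leq r$.

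The heart of the argument is the exponential convergence of geodesic pairs in CAT($-1$). I would split the geodesic quadrilateral with vertices $x,y,gy,gx$ by the diagonal from $x$ to $gy$, apply CAT($-1$) comparison to each of the two resulting triangles, and use the hyperbolic cosine rule in the comparison hyperbolic quadrilateral to obtain an explicit bound of the form
$$d(\gamma(t),g\gamma(t)) \leq C(r)\,e^{-\min(t,\,L-t)}$$
for some constant $C(r)$ depending only on $r$. Choosing $l$ large enough so that $C(r)\,e^{-l}\leq\epsilon$ then yields $d(\gamma(t),g\gamma(t))\leq\epsilon$ for all $t\in[l,L-l]$.

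To finish, take any point $gz\in g\gamma'$; by hypothesis $z=\gamma(t_0)$ for some $t_0\in[l,L-l]$, so $gz=g\gamma(t_0)$ and the previous bound gives $d(gz,z)\leq\epsilon$. Since $z\in\gamma'\subseteq\gamma'_+$, this shows $gz\in\mathcal{N}_\epsilon(\gamma'_+)$, proving $g\gamma'\subseteq\mathcal{N}_\epsilon(\gamma'_+)$. The main obstacle is step two, namely making the CAT($-1$) convergence quantitative enough to beat $r$; as the statement itself flags this as a variant of \cite[Lem 3.10]{K}, one can alternatively just invoke that lemma directly and extract the needed constant $l$ without reproducing the hyperbolic comparison. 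Note that the $r$-enlargement $\gamma'_+$ of $\gamma'$ is not actually used in this approach, since the bound already places $gz$ within $\epsilon$ of $z\in\gamma'$ itself; the statement is phrased with $\gamma'_+$ simply for convenience in its application.
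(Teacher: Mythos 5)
The paper does not actually prove this lemma---it is stated without proof as a variant of \cite[Lem~3.10]{K}---so your proposal must be judged on its own terms, and it contains a genuine error. The central quantitative claim, $d(\gamma(t),g\gamma(t))\leq C(r)e^{-\min(t,L-t)}$ for same-parameter points, is false. Take $Y=\mathbb{H}^2$, let $\gamma$ be a long segment of a geodesic line $\ell$, and let $g$ be a hyperbolic translation along $\ell$ with translation length $r$. Then $d(x,gx)=d(y,gy)=r$, yet $d(\gamma(t),g\gamma(t))=r$ for \emph{every} $t$, so no choice of $l$ makes this quantity $\leq\epsilon$ when $r>\epsilon$ (which the hypothesis $r\geq\epsilon$ permits). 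Your final step, which concludes $d(gz,z)\leq\epsilon$ for $z\in\gamma'$ and declares the enlargement $\gamma'_+$ superfluous, therefore proves a false statement; the $r$-enlargement is not a notational convenience but the entire point of the formulation, precisely because $g$ may slide points along $\gamma$ by up to roughly $r$.

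What the comparison argument actually yields---and what you should aim to prove---is that for $\min(t,L-t)\geq l$ the point $g\gamma(t)$ lies within $C(r)e^{-\min(t,L-t)}$ of the \emph{segment} $\gamma$, with nearest point at parameter within about $r$ of $t$. Concretely, splitting the quadrilateral $x,y,gy,gx$ along the diagonal $x\,gy$ and applying $\mathrm{CAT}(-1)$ comparison in each of the two triangles shows that $g\gamma(t)$ is exponentially close (in the distance to the nearer endpoint) to a point of the diagonal, which is in turn exponentially close to a point of $\gamma$; keeping track of distances to the vertices shows that this point of $\gamma$ has parameter in, say, $[t-r-1,\,t+r+1]$, hence lies in $\gamma'_+$ once $l$ is large enough that the exponential terms are $\leq\epsilon$. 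Alternatively, as you suggest, one can simply invoke \cite[Lem~3.10]{K}, which is exactly what the authors do; but any self-contained proof must be phrased as proximity of $g\gamma(t)$ to a window of width comparable to $r$ inside $\gamma$, not to the single point $\gamma(t)$.
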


\begin{proof}[Proof of Theorem \ref{thm:main_acyl}]
Consider $r \geq\epsilon$ for $\epsilon$ as in Definition~\ref{lem:continuity_proj}, and let $L_0$, $N_0 >0$ be  such that two points of $Y$ at distance at least $L_0$ are stabilised by at most $N_0$ elements of $G$. Let $l=l(r,\epsilon)$ be as in Lemma~\ref{lem:fellow_travel}.
Since $Y$ has only finitely many isometry types of simplices, there is an upper bound $B$ on the length of a geodesic contained in the star of a vertex of $Y$ \cite[Lem~I.7.23 and Thm~I.7.28]{BH}. Let $L=L_0+4B+2l$. For a point $x \in Y$, we define the  \textit{$r$-stabiliser} of $x$ as
$$\St_r(x) = \{g \in G \mbox{ such that } d(x, gx) \leq r \}.$$
To prove acylindrical hyperbolicity, consider $x, y \in Y$ with $d(x, y)\geq L$. We will bound the size of $\St_r(x) \cap \St_r(y)$.

Let $\gamma$ be the geodesic between $x$ and $y$. By Lemma~\ref{lem:fellow_travel}, there is a subsegment $\gamma'=x'y'\subset \gamma$ of length $L_0+4B$ such that for all $g\in \St_r(x) \cap \St_r(y)$ we have $d(gx',x'_g),d(gy',y'_g)<\epsilon$ for some $x'_g,y'_g\in
\gamma'_+$. By Remark~\ref{rem:anchored}, each $x'_gy'_g$ has a subsegment with an anchored decomposition obtained by removing from $x'_gy'_g$ a subsegment of length at most $B$ at each of $x'_g,y'_g$. Consequently, by Lemma~\ref{lem:stable_global} there is a subsegment $\gamma''_g=x''_gy''_g\subset x'y'$, obtained by removing from $\gamma'$ a subsegment of length at most $B$ at each of $x',y'$,
with $g(Gal(\gamma''_g)-\tau_{x''_g}\cup\tau_{y''_g})\subset Gal^*(\gamma'_+)$. Thus the subsegment $\gamma'''$ of $\gamma'$ of length $L_0$ and centred at the midpoint of $\gamma'$ satisfies
$gGal(\gamma''') \subset Gal^*(\gamma'_+)$.
By Remark~\ref{rem:extendedsize}, there is a constant $C$ such that $Gal^*(\gamma'_+)$ contains at most $C'=(L_0+4B+2r)C$ vertices. Consequently $|\St_r(x) \cap \St_r(y)|\leq N_0C'!$, since otherwise there would be $g_0,g_1,\ldots, g_{N_0}$ with each $g_0^{-1}g_i$ fixing $Gal(\gamma''')$.
\end{proof}

\begin{bibdiv}
\begin{biblist}

\bib{HHUniversal}{article}{
author={Abbot, C.},
author={Behrstock, J.},
author={Durham, M. G.},
   title={Largest acylindrical actions and stability in hierarchically hyperbolic groups},
   eprint={arXiv:1705.06219},
   status={to appear},
   journal={Trans. Amer. Math. Soc.},
   date={2017}}

\bib{HypStructures}{article}{
	author={Abbot, C.},
	author={Balasubramanya, S.},
	author={Osin, D.},
	title={Hyperbolic structures on groups},
	journal={Algebr. Geom. Topol.},
	volume={19},
	date={2019},
	number={4},
	pages={1747--1835}}

\bib{AS}{article}{
   author={Appel, K. I.},
   author={Schupp, P. E.},
   title={Artin groups and infinite Coxeter groups},
   journal={Invent. Math.},
   volume={72},
   date={1983},
   number={2},
   pages={201--220}}

\bib{BestvinaArtin}{article}{
    AUTHOR = {Bestvina, M.},
     TITLE = {Non-positively curved aspects of {A}rtin groups of finite
              type},
   JOURNAL = {Geom. Topol.},
    VOLUME = {3},
      YEAR = {1999},
     PAGES = {269--302},
      ISSN = {1465-3060}
}

\bib{BestvinaFeighnHandelTitsI}{article}{
    AUTHOR = {Bestvina, M.},
     AUTHOR = { Feighn, M. },
      AUTHOR = { Handel, M.},
     TITLE = {The {T}its alternative for {${\rm Out}(F_n)$}. {I}.
              {D}ynamics of exponentially-growing automorphisms},
   JOURNAL = {Ann. of Math. (2)},
  FJOURNAL = {Annals of Mathematics. Second Series},
    VOLUME = {151},
      YEAR = {2000},
    NUMBER = {2},
     PAGES = {517--623}}

\bib{BFH2}{article}{
   author={Bestvina, M.},
   author={Feighn, M.},
   author={Handel, M.},
   title={The Tits alternative for ${\rm Out}(F_n)$. II. A Kolchin type
   theorem},
   journal={Ann. of Math. (2)},
   volume={161},
   date={2005},
   number={1},
   pages={1--59}}

\bib{BowditchTightGeodesics}{article}{
    AUTHOR = {Bowditch, B. H.},
     TITLE = {Tight geodesics in the curve complex},
   JOURNAL = {Invent. Math.},
  FJOURNAL = {Inventiones Mathematicae},
    VOLUME = {171},
      YEAR = {2008},
    NUMBER = {2},
     PAGES = {281--300}}

\bib{BM}{article}{
   author={Brady, T.},
   author={McCammond, J. P.},
   title={Three-generator Artin groups of large type are biautomatic},
   journal={J. Pure Appl. Algebra},
   volume={151},
   date={2000},
   number={1},
   pages={1--9}}

\bib{4BraidCAT0}{article}{
    AUTHOR = {Brady, T.},
      AUTHOR = {McCammond, J.},
     TITLE = {Braids, posets and orthoschemes},
   JOURNAL = {Algebr. Geom. Topol.},
  FJOURNAL = {Algebraic \& Geometric Topology},
    VOLUME = {10},
      YEAR = {2010},
    NUMBER = {4},
     PAGES = {2277--2314}}	

\bib{B}{article}{
   author={Bridson, Martin R.},
   title={On the semisimplicity of polyhedral isometries},
   journal={Proc. Amer. Math. Soc.},
   volume={127},
   date={1999},
   number={7},
   pages={2143--2146}}

\bib{BH}{book}{
   author={Bridson, M. R.},
   author={Haefliger, A.},
   title={Metric spaces of non-positive curvature},
   series={Grundlehren der Mathematischen Wissenschaften [Fundamental
   Principles of Mathematical Sciences]},
   volume={319},
   publisher={Springer-Verlag},
   place={Berlin},
   date={1999}
   }

\bib{BZ}{article}{
   author={Burago, Ju. D.},
   author={Zalgaller, V. A.},
   title={Polyhedral embedding of a net},
   language={Russian, with English summary},
   journal={Vestnik Leningrad. Univ.},
   volume={15},
   date={1960},
   number={7},
   pages={66--80}}

\bib{BW}{article}{
   author={Bux, K.--U.},
   author={Witzel,S.}
   title={Local convexity in $\mathrm{CAT}(\kappa)$ spaces},
   status={unpublished manuscript},
   date={2012},
   eprint={arXiv:1211.1871}
   }

\bib{CalvezWiest}{article}{
    AUTHOR = {Calvez, M.},
     AUTHOR = {Wiest, B.},
     TITLE = {Acylindrical hyperbolicity and {A}rtin-{T}its groups of
              spherical type},
   JOURNAL = {Geom. Dedicata},
  FJOURNAL = {Geometriae Dedicata},
    VOLUME = {191},
      YEAR = {2017},
     PAGES = {199--215}
}

\bib{CantatTits}{article}{
    AUTHOR = {Cantat, S.},
     TITLE = {Sur les groupes de transformations birationnelles des
              surfaces},
   JOURNAL = {Ann. of Math. (2)},
  FJOURNAL = {Annals of Mathematics. Second Series},
    VOLUME = {174},
      YEAR = {2011},
    NUMBER = {1},
     PAGES = {299--340}}

\bib{CapraceSageev}{article}{
    AUTHOR = {Caprace, P.-E.},
     AUTHOR = {Sageev, M.},
     TITLE = {Rank rigidity for {${\rm CAT}(0)$} cube complexes},
   JOURNAL = {Geom. Funct. Anal.},
  FJOURNAL = {Geometric and Functional Analysis},
    VOLUME = {21},
      YEAR = {2011},
    NUMBER = {4},
     PAGES = {851--891}
}

\bib{CharneyProblems}{report}{
author={Charney, R.},
   title={Problems related to Artin groups},
   book={Problems in Geometric Group Theory wiki}
   organization={Problems in Geometric Group Theory wiki, American Institute of Mathematics},
   date={2007}}

\bib{CD}{article}{
   author={Charney, R.},
   author={Davis, M. W.},
   title={The $K(\pi,1)$-problem for hyperplane complements associated to
   infinite reflection groups},
   journal={J. Amer. Math. Soc.},
   volume={8},
   date={1995},
   number={3},
   pages={597--627}}

\bib{CharneyAcylArtin}{article}{
   author={Charney, R.},
   author={Morris-Wright, R.},
   title={Artin groups of infinite type: trivial centers and acylindical hyperbolicity},
   eprint={arXiv:1805.04028},
   journal={Proc. Amer. Math. Soc.},
   status={to appear},
   date={2019}}

\bib{ChatterjiMartin}{article}{
   author={Chatterji, Indira},
   author={Martin, Alexandre},
   title={A note on the acylindrical hyperbolicity of groups acting on ${\rm
   CAT}(0)$ cube complexes},
   conference={
      title={Beyond hyperbolicity},
   },
   book={
      series={London Math. Soc. Lecture Note Ser.},
      volume={454},
      publisher={Cambridge Univ. Press, Cambridge},
   },
   date={2019},
   pages={160--178}}

\bib{SphericalArtinLinear}{article}{
    AUTHOR = {Cohen, A. M.},
    AUTHOR = {Wales, D. B.},
     TITLE = {Linearity of {A}rtin groups of finite type},
   JOURNAL = {Israel J. Math.},
  FJOURNAL = {Israel Journal of Mathematics},
    VOLUME = {131},
      YEAR = {2002},
     PAGES = {101--123}}

\bib{CrispAut}{article}{
    AUTHOR = {Crisp, J.},
     TITLE = {Automorphisms and abstract commensurators of 2-dimensional
              {A}rtin groups},
   JOURNAL = {Geom. Topol.},
  FJOURNAL = {Geometry and Topology},
    VOLUME = {9},
      YEAR = {2005},
     PAGES = {1381--1441}
}

\bib{DGO}{article}{
   author={Dahmani, F.},
   author={Guirardel, V.},
   author={Osin, D.},
   title={Hyperbolically embedded subgroups and rotating families in groups
   acting on hyperbolic spaces},
   journal={Mem. Amer. Math. Soc.},
   volume={245},
   date={2017},
   number={1156},
   pages={v+152}}

\bib{Del}{article}{
   author={Deligne, P.},
   title={Les immeubles des groupes de tresses g\'en\'eralis\'es},
   language={French},
   journal={Invent. Math.},
   volume={17},
   date={1972},
   pages={273--302}}

\bib{Dig}{article}{
   author={Digne, Fran\c{c}ois},
   title={On the linearity of Artin braid groups},
   journal={J. Algebra},
   volume={268},
   date={2003},
   number={1},
   pages={39--57}}

\bib{ConingOffGenevois}{article}{
   author={Genevois, A.},
   title={Coning-off $\mathrm{CAT}(0)$ cube complexes},
   eprint={arXiv:1603.06513},
   date={2016}}

\bib{H2}{article}{
   author={Haettel, T.},
   title={XXL type Artin groups are $\mathrm{CAT}(0)$ and acylindrically hyperbolic},
   eprint={arXiv:1905.11032},
   date={2019}}

\bib{VirtuallyCubulatedArtin}{article}{
   author={Haettel, T.},
   title={Virtually cocompactly cubulated Artin-Tits groups},
   journal={International Mathematics Research Notices}
   eprint={arXiv:1509.08711},
   status={to appear},
   date={2020}}

\bib{6BraidCAT0}{article}{
    AUTHOR = {Haettel, T.},
     AUTHOR = {Kielak, D.},
      AUTHOR = {Schwer, P.},
     TITLE = {The 6-strand braid group is {${\rm CAT}(0)$}},
   JOURNAL = {Geom. Dedicata},
  FJOURNAL = {Geometriae Dedicata},
    VOLUME = {182},
      YEAR = {2016},
     PAGES = {263--286}}
		
\bib{HJP}{article}{
   author={Huang, J.},
   author={Jankiewicz, K.},
   author={Przytycki, P.},
   title={Cocompactly cubulated 2-dimensional Artin groups},
   journal={Comment. Math. Helv.},
   volume={91},
   date={2016},
   number={3},
   pages={519--542}}

\bib{SystolicArtin}{article}{
   author={Huang, Jingyin},
   author={Osajda, Damian},
   title={Large-type Artin groups are systolic},
   journal={Proc. Lond. Math. Soc. (3)},
   volume={120},
   date={2020},
   number={1},
   pages={95--123}}

\bib{IvanovAutomorphisms}{article}{
    AUTHOR = {Ivanov, N. V.},
     TITLE = {Algebraic properties of the {T}eichm\"uller modular group},
   JOURNAL = {Dokl. Akad. Nauk SSSR},
  FJOURNAL = {Doklady Akademii Nauk SSSR},
    VOLUME = {275},
      YEAR = {1984},
    NUMBER = {4},
     PAGES = {786--789}}

\bib{K}{book}{
   author={Kapovich, Michael},
   title={Hyperbolic manifolds and discrete groups},
   series={Modern Birkh\"{a}user Classics},
   note={Reprint of the 2001 edition},
   publisher={Birkh\"{a}user Boston, Inc., Boston, MA},
   date={2009},
   pages={xxviii+467}}

\bib{Kr}{article}{
   author={Krammer, Daan},
   title={Braid groups are linear},
   journal={Ann. of Math. (2)},
   volume={155},
   date={2002},
   number={1},
   pages={131--156}}

\bib{L}{article}{
   author={van der Lek, H.},
   title={The homotopy type of complex hyperplane complements},
   status={Ph.D. thesis},
   date={1983},
   journal={University of Nijmegan}}

\bib{MartinAcylSquare}{article}{
    AUTHOR = {Martin, A.},
    TITLE = {Acylindrical actions on \rm{CAT}(0) square complexes},
    eprint = {arXiv:1509.03131},
    journal={Groups, Geom. and Dyn.},
    status={to appear},
    YEAR = {2019}}

\bib{MPabelian}{article}{
   AUTHOR = {Martin, A.},
   AUTHOR = {Przytycki, P.},
   TITLE = {Abelian subgroups of two-dimensional Artin groups},
   eprint={arXiv:2007.08288}
   status={submitted}
   YEAR = {2019}}

\bib{MP2}{article}{
   AUTHOR = {Martin, A.},
   AUTHOR = {Przytycki, P.},
   TITLE = {Tits alternative for Artin groups of type FC},
   journal={J. Group Theory},
   volume={23},
   number={4},
   YEAR = {2020},
   pages={563--573}}

\bib{MasurMinsky}{article}{
    AUTHOR = {Masur, H. A.},
    AUTHOR = {Minsky, Y. N.},
     TITLE = {Geometry of the complex of curves. {I}. {H}yperbolicity},
   JOURNAL = {Invent. Math.},
  FJOURNAL = {Inventiones Mathematicae},
    VOLUME = {138},
      YEAR = {1999},
    NUMBER = {1},
     PAGES = {103--149}
}

\bib{McCarthyTitsAlternative}{article}{
    AUTHOR = {McCarthy, J.},
     TITLE = {A ``{T}its-alternative'' for subgroups of surface mapping
              class groups},
   JOURNAL = {Trans. Amer. Math. Soc.},
  FJOURNAL = {Transactions of the American Mathematical Society},
    VOLUME = {291},
      YEAR = {1985},
    NUMBER = {2},
     PAGES = {583--612}}

\bib{Mou}{book}{
   author={Moussong, G.},
   title={Hyperbolic Coxeter groups},
   note={Thesis (Ph.D.)--The Ohio State University},
   publisher={ProQuest LLC, Ann Arbor, MI},
   date={1988},
   pages={55}}

\bib{OP}{article}{
   AUTHOR = {Osajda, D.},
   AUTHOR = {Przytycki, P.},
   note={With an appendix by J. McCammond, D. Osajda, and P. Przytycki},
   TITLE = {Tits Alternative for groups acting properly on $2$-dimensional recurrent complexes},
   eprint = {arXiv:1904.07796},
   YEAR = {2019}}

\bib{OsinAcyl}{article}{
    AUTHOR = {Osin, D.},
     TITLE = {Acylindrically hyperbolic groups},
   JOURNAL = {Trans. Amer. Math. Soc.},
	VOLUME = {368},
	YEAR = {2016},
	PAGES = {851--888}}

\bib{SageevWiseTits}{article}{
    AUTHOR = {Sageev, M.},
    AUTHOR = {Wise, D. T.},
     TITLE = {The {T}its alternative for {${\rm CAT}(0)$} cubical complexes},
   JOURNAL = {Bull. London Math. Soc.},
  FJOURNAL = {The Bulletin of the London Mathematical Society},
    VOLUME = {37},
      YEAR = {2005},
    NUMBER = {5},
     PAGES = {706--710}}

\bib{TitsAlternative}{article}{
    AUTHOR = {Tits, J.},
     TITLE = {Free subgroups in linear groups},
   JOURNAL = {J. Algebra},
  FJOURNAL = {Journal of Algebra},
    VOLUME = {20},
      YEAR = {1972},
     PAGES = {250--270}}

\end{biblist}
\end{bibdiv}

\end{document}